\DeclareRobustCommand{\SkipTocEntry}[5]{}
\newaliascnt{thm}{equation}
\newtheorem{thm}[equation]{Theorem}
\newtheorem{thm*}{Theorem}
\newtheorem{pro}[thm]{Proposition}
\newtheorem{hyp}[thm]{Hypothesis}
\newtheorem{lem}[thm]{Lemma}
\newtheorem{cor}[thm]{Corollary}
\theoremstyle{definition}
\newtheorem{con}[thm]{Convention}
\newtheorem{exa}[thm]{Example}
\newtheorem{rmk}[thm]{Remark}
\numberwithin{equation}{section}
\DeclareMathOperator{\car}{char}
\DeclareMathOperator{\cone}{cone}
\DeclareMathOperator{\dm}{DM_{\mathrm{gm}}}
\DeclareMathOperator{\dmeff}{DM_{\mathrm{gm}}^{\mathrm{eff}}}
\DeclareMathOperator{\dmgen}{D(T)M_{\mathrm{gm}}^{(\et)}}
\DeclareMathOperator{\DM}{DM}
\DeclareMathOperator{\DMeff}{DM^{\mathrm{eff}}}
\DeclareMathOperator{\dmet}{DM_{\mathrm{gm}}^{\et}}
\DeclareMathOperator{\dmeteff}{DM_{\mathrm{gm}}^{\et,eff}}
\DeclareMathOperator{\DMet}{DM^{\mathrm{\et}}}
\DeclareMathOperator{\DMeteff}{DM^{\mathrm{\et,eff}}}
\DeclareMathOperator{\dtm}{DTM_{\mathrm{gm}}}
\DeclareMathOperator{\DTM}{DTM}
\DeclareMathOperator{\dtmDTM}{DTM_{(\mathrm{gm})}}
\DeclareMathOperator{\dtmet}{DTM^{\et}_{\mathrm{gm}}}
\DeclareMathOperator{\End}{End}
\DeclareMathOperator{\fil}{\mathcal{F}}
\DeclareMathOperator{\filgal}{mod_{\scriptscriptstyle\mathrm{fil}}}
\DeclareMathOperator{\filgalu}{mod_{\scriptscriptstyle\mathrm{fil-un}}}
\DeclareMathOperator{\gal}{\mdfin}
\DeclareMathOperator{\Hm}{H}
\DeclareMathOperator{\Md}{Mod}
\DeclareMathOperator{\mdfin}{mod}
\DeclareMathOperator{\spc}{Spc}
\DeclareMathOperator{\spec}{Spec}
\DeclareMathOperator{\spch}{Spc^{\mathrm{h}}}
\DeclareMathOperator{\spech}{Spec^{\mathrm{h}}}
\DeclareMathOperator{\supp}{supp}
\providecommand{\A}{\ensuremath{\mathcal A}}
\providecommand{\DA}{\ensuremath{\mathbb A}}
\providecommand{\aet}{\ensuremath{a_{\et}}}
\providecommand{\bc}{\ensuremath{\gamma^{*}}}
\providecommand{\bco}{\ensuremath{\overline{\gamma}^{*}}}
\providecommand{\Cat}{\ensuremath{{\mathrm{Cat}}}}
\providecommand{\cf}{cf.\ }
\providecommand{\eg}{e.g.\ }
\providecommand{\et}{\ensuremath{\mathrm{\acute{e}t}}}
\providecommand{\id}{\ensuremath{\mathrm{id}}}
\providecommand{\ie}{i.e.\ }
\providecommand{\F}{\ensuremath{\mathbb{F}}}
\providecommand{\ff}{\ensuremath{\gamma_{*}}}
\providecommand{\ffo}{\ensuremath{\overline{\gamma}_{*}}}
\providecommand{\DF}{\ensuremath{\mathbb F}}
\providecommand{\gr}{\ensuremath{\mathrm{gr}}}
\providecommand{\Hmm}{\ensuremath{\mathrm{H}_{\scriptscriptstyle\mathrm{M}}}}
\providecommand{\Hmet}{\ensuremath{\mathrm{H}_{\et}}}
\providecommand{\I}{\ensuremath{\mathcal I}}
\providecommand{\Lrs}{\ensuremath{{\mathrm{LRS}}}}
\DeclareSymbolFont{bbold}{U}{bbold}{m}{n}
\DeclareSymbolFontAlphabet{\mathbbold}{bbold}
\providecommand{\one}{\ensuremath{\mathbbold{1}}}
\providecommand{\pos}{\ensuremath{\mathrm{pos}}}
\providecommand{\PP}{\ensuremath{\mathbbold P}}
\providecommand{\prmod}[1]{\ensuremath{\mathfrak{m}_{#1}}}
\providecommand{\prrat}{\ensuremath{\mathfrak{m}_{0}}}
\providecommand{\pret}[1]{\ensuremath{\mathfrak{e}_{#1}}}
\providecommand{\Q}{\ensuremath{\mathbb{Q}}}
\providecommand{\R}{\ensuremath{\mathcal R}}
\providecommand{\reet}{\ensuremath{\mathrm{Re}}}
\providecommand{\rigttCat}{\ensuremath{\mathrm{tt}\Cat^{\mathrm{rig}}}}
\providecommand{\rigttCatop}{\ensuremath{\mathrm{tt}\Cat^{\mathrm{rig,op}}}}
\providecommand{\Rng}{\ensuremath{{\mathrm{Rng}}}}
\providecommand{\Rs}{\ensuremath{{\mathrm{RS}}}}
\providecommand{\sm}{\ensuremath{\mathrm{Sm}}}
\providecommand{\T}{\ensuremath{\mathcal T}}
\providecommand{\DT}{\ensuremath{\mathbb T}}
\providecommand{\Tpl}{\ensuremath{{\mathrm{Top}}}}
\providecommand{\Z}{\ensuremath{\mathbb{Z}}}
\providecommand{\cal}{\mathcal}
\DeclarePairedDelimiter{\delimpar}{(}{)}
\newcommand{\prns}[2][0]{%
  \ifcase#1\relax
    \delimpar{#2}\or
    \delimpar[\big]{#2}\or
    \delimpar[\Big]{#2}\or
    \delimpar[\bigg]{#2}\or
    \delimpar[\Bigg]{#2}
  \else
    \delimpar*{#2}
  \fi}
\def\scripts#1#2#3{\def\scripts@{\prns[#1]{#3}}\def\scripts@@{#2}\def\scripts@@@{#2}\@ifnextchar^\@sup\@nsup}
\def\@sup^#1{\def\scripts@@@{\scripts@@^{#1}}\@ifnextchar_\@supsub{\scripts@@@\scripts@}}
\def\@supsub_#1{\scripts@@@_{#1}\scripts@}
\def\@nsup{\@ifnextchar_{\@sub}{\scripts@@@\scripts@}}
\def\@sub_#1{\def\scripts@@@{\scripts@@_{#1}}\@ifnextchar^\@subsup{\scripts@@@\scripts@}}
\def\@subsup^#1{\scripts@@@^{#1}\scripts@}
\newcommand{\D}[2][0]{\scripts{#1}{\operatorname{\mathrm{D}}}{#2}}
\begin{document}

\title{tt-geometry of Tate motives over algebraically closed fields}
\author{Martin Gallauer}
\email{gallauer@maths.ox.ac.uk}
\address{Mathematical Institute, University of Oxford, Oxford, OX2 6GG, UK}
\urladdr{http://people.maths.ox.ac.uk/gallauer}
\keywords{tt-geometry, motive, Tate motive, classification, cohomology.}
\subjclass[2010]{14C15,
  14F42,
  19E15,
  18E30, 
  18D10, 
  18G99. 
 }

\begin{abstract}
  We study Tate motives with integral coefficients through the lens of tensor triangular geometry. For some base fields, including $\overline{\Q}$ and $\overline{\F_{p}}$, we arrive at a complete description of the tensor triangular spectrum and a classification of the thick tensor ideals.
\end{abstract}
\maketitle
\tableofcontents
\section{Introduction}
\label{sec:introduction}
Although the theory of (mixed) motives has in recent years made significant progress, especially in regards to motivic sheaves (\ie motives over general schemes), our understanding of motives over a field is still limited. It is therefore natural to restrict one's attention to certain subclasses of motives in order to gain some intuition. One subclass which has proven particularly fruitful for that purpose is the class of \emph{(mixed) Tate motives}. These are the motives which can be constructed from the simple building blocks $\Z(n)$ (the ``Tate twists'', $n\in\Z$) by extensions, (de)suspensions, and direct summands. Another view is that they encode the motivic cohomology of the base field. As should be clear from these two descriptions, Tate motives are at the same time relatively simple (compared to the class of all motives), and yet contain complex and interesting information. A striking illustration of the latter fact is their role in the theory of periods, particularly their relation to multiple zeta values, as explained in~\cite{deligne-goncharov:tate-fundamental-group,terasoma:tate-motives-multiple-zeta,brown:tate-motives-over-Z}.

One long strand of research into Tate motives has been concerned with uncovering the \emph{structural properties} of the category of Tate motives, with some success \cite[\eg][]{bloch:tate-motives,bloch-kriz:tate-motives,levine92-tatemotives,kriz-may:operads-motives,positselski:artin-tate-motives,wildeshaus:tate,iwanari:bar-tannakization}. The motivation for the present paper is to advance in this direction, by focusing on the particular properties visible to \emph{tensor triangular geometry}. Indeed, Tate motives form a triangulated category with a compatible tensor structure (a \emph{tt-category} for short), and one may try to classify its tt-ideals, \ie its thick subcategories closed under tensor products with arbitrary Tate motives. The classification of tt-ideals gives insights into the composition and complexity of the tt-category; if two objects generate different tt-ideals, this means that they cannot be constructed out of each other using extensions, (de)suspensions, tensor products, and direct summands: they are quite far from being ``the same''.

The main device in tt-geometry is Balmer's \emph{tt-spectrum}~\cite{balmer:spectrum}, a spectral space associated to the tt-category which encodes the (radical) tt-ideals through its topology. For a number of classical mathematical domains, the tt-spectrum has been studied; we refer to~\cite{balmer:icm} for an overview of the basic theory, its early successes and applications. In relation to the present article, there has been earlier work on the tt-spectrum of certain motives (including Tate motives)~\cite{peter:spectrum-damt,kelly:tt-motives-finite-fields}. However, the arguments were restricted to rational coefficients (and certain base fields) so that the tt-spectrum turned out to be a singleton space. In other words, in these categories every object can be constructed from every other (non-zero) object.

In contrast, here we work with integral coefficients and find non-trivial tt-spectra. In particular, we determine completely the tt-spectrum of Tate motives over the algebraic numbers (for more general base fields see \cref{dtm-primes} and \cref{closed-subsets}):

\begin{thm*}\label{intro-dtm-Qbar}
The tt-spectrum of $\dtm(\overline{\Q},\Z)$ consists of the following points, with specialization relations, depicted by the lines, going upward.
\vspace{0.2cm}
\begin{center}
  \scalebox{0.75}{
    \begin{tikzpicture}[shorten >=3pt,shorten <=3pt, level
      1/.style={level distance=1.8cm}, level 2/.style={level
        distance=1cm}, every node/.style={inner sep=2pt},
      mod/.style={circle, fill=black,}, et/.style={circle, fill=black}, root/.style={circle, fill=black}, no/.style={edge from
        parent/.style={}} ]
      \node (root) [root,label={[label distance=8mm]right: $\prrat$}] {} [grow'=up] child {node [et] {} child
        {node [mod] {}} } child {node [et] {} child {node [mod] {}} }
      child[no] {node {$\cdots$} child {node {$\cdots$}} } child {node
        [et,label=right: {$\pret{\ell}$}] {} child {node [mod,label=right: {$\prmod{\ell}$}] {}} }child[no] {node {$\cdots$} child {node {$\cdots$}} } ;
      \begin{scope}[every node/.style={right}]
        \xdef\level{root} 
        \def\rightmostnode{root-3-1} 
        \foreach \text in {rational motivic cohomology, mod-{$\ell$} \'{e}tale
          cohomology, mod-{$\ell$} motivic cohomology} { \path (\level
           -|\rightmostnode) ++(35mm,0) node{$\}$} ++(5mm,0) node
           {\text}; \xdef\level{\level-1} }
       \end{scope}
       \coordinate (br) at (0,-4mm);
     \end{tikzpicture}
}
\end{center}
\vspace{0.2cm}
Here, $\ell$ runs through all prime numbers, and the points are defined by the vanishing of the cohomology theories as indicated on the right. Moreover, the proper closed subsets are precisely the finite subsets stable under specialization.
\end{thm*}
\noindent{}
From this result we easily deduce a classification of the tt-ideals in $\dtm(\overline{\Q},\Z)$ (see \cref{tt-ideals}).

Given that motives are supposed to encode the cohomological aspects of algebraic varieties, it is of course not surprising that the tt-spectrum above contains points coming from the different cohomology theories available. But it is reassuring to find that \emph{all} points are of this form, lending some support to the belief that motives are the \emph{universal} cohomology theory. (Having said that, it would be very interesting to find non-expected points as these could hint at cohomology theories not yet discovered, or other more mysterious phenomena.)

In order to generalize from rational to integral coefficients we need to understand in particular the case of finite coefficients. The following is our result in that direction (cf. \cref{dtm-ell-tt-spectrum}).
\begin{thm*}\label{intro-dtm-ell}
  Let $\F$ be an algebraically closed field, and $\ell$ a prime number invertible in $\F$. The tt-spectrum of $\dtm(\F,\Z/\ell)$ is canonically isomorphic to the homogeneous spectrum of the polynomial ring in one variable $\Z/\ell[\beta]$.
\end{thm*}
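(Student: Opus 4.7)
The plan is to identify $\dtm(\F,\Z/\ell)$, up to tensor-triangular equivalence, with the perfect derived category $\mathrm{D}^{\mathrm{perf}}_{\gr}(\Z/\ell[\beta])$ of finitely presented graded modules, and then invoke the identification of the tt-spectrum of such a category with the homogeneous spectrum of the graded ring.

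The starting point is the computation of the bigraded endomorphism ring $R^{*,*} := \bigoplus_{p,q\in\Z}\mathrm{Hom}_{\dtm(\F,\Z/\ell)}(\one,\one(q)[p])$ of the tensor unit. These Hom-groups are the motivic cohomology groups $H^{p,q}(\F,\Z/\ell)$: by Voevodsky's vanishing they vanish for $p>q$, while Beilinson--Lichtenbaum identifies $H^{p,q}(\F,\Z/\ell) \cong H^{p}_{\et}(\F,\mu_{\ell}^{\otimes q})$ for $0 \le p \le q$. Since $\F$ is algebraically closed with $\ell$ invertible, the latter vanishes in positive cohomological degrees and equals $\mu_{\ell}^{\otimes q}(\F)\cong\Z/\ell$ in degree zero. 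A short duality/cancellation argument takes care of $q<0$, yielding total vanishing there. Hence $R^{*,*}\cong\Z/\ell[\beta]$, concentrated in cohomological degree $0$ and nonnegative weights, with $\beta\in\mathrm{Hom}(\one,\one(1))$ the Bott element.

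Next I would promote this Hom-computation to an honest tt-equivalence $\dtm(\F,\Z/\ell)\simeq\mathrm{D}^{\mathrm{perf}}_{\gr}(\Z/\ell[\beta])$. The full subcategory of $\dtm(\F,\Z/\ell)$ spanned by $\{\one(n)\}_{n\in\Z}$ is, by the above, equivalent as a $\Z/\ell$-linear symmetric monoidal additive category to the category of finitely generated free graded $\Z/\ell[\beta]$-modules (via $\one(n)\mapsto\Z/\ell[\beta](n)$, compatibly with $\one(a)\otimes\one(b)=\one(a+b)$). Since all higher $\mathrm{Ext}$s between these generators vanish, any DG or $\infty$-categorical enhancement of $\dtm(\F,\Z/\ell)$ is formal on this subcategory, and thick closure (with idempotent completion) extends the equivalence to $\mathrm{K}^{b}$ of finitely generated free graded $\Z/\ell[\beta]$-modules, i.e., $\mathrm{D}^{\mathrm{perf}}_{\gr}(\Z/\ell[\beta])$.

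Finally, by the graded analogue of the Thomason--Balmer computation, the tt-spectrum of $\mathrm{D}^{\mathrm{perf}}_{\gr}(R)$ is canonically $\spech(R)$ for commutative graded (Noetherian) $R$. Applied to $R=\Z/\ell[\beta]$, this yields the asserted two-point space $\{(0),(\beta)\}$ with $(0)\rightsquigarrow(\beta)$. The main obstacle is the middle step: packaging the Hom-vanishing into a genuine tt-equivalence requires some care with enhancements and monoidality. If one prefers to avoid enhancements, an alternative is to work directly with Balmer's universal comparison map $\rho\colon\spc(\dtm(\F,\Z/\ell))\to\spech(\Z/\ell[\beta])$ and verify bijectivity by hand: the preimage of $(\beta)$ is the kernel of the étale realization $\dtm(\F,\Z/\ell)\to\dtmet(\F,\Z/\ell)\simeq\mathrm{D}^{\mathrm{perf}}(\Z/\ell)$ (which is prime because the target is a tt-field with one-point spectrum by rigidity), while the preimage of $(0)$ is the minimal prime whose existence follows from the graded-domain structure of $\Z/\ell[\beta]$ together with Balmer's surjectivity of $\rho$.
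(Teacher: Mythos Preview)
Your main route is essentially correct and reaches the same conclusion as the paper, but by a genuinely different path. The paper does not directly identify $\dtm(\F,\Z/\ell)$ with $\mathrm{D}^{\mathrm{perf}}_{\gr}(\Z/\ell[\beta])$. Instead it invokes Positselski's description of $\dtm(\F,\Z/\ell)$ as the bounded derived category of \emph{filtered} $G_{\F}$-modules with unipotent graded pieces; for $\F$ algebraically closed this is simply the filtered derived category $\D{\Z/\ell}^{b}_{\mathrm{fil}}$, whose tt-ideals were computed in the companion paper \cite{gallauer:tt-fmod}. Your target $\mathrm{D}^{\mathrm{perf}}_{\gr}(\Z/\ell[\beta])$ and the paper's $\D{\Z/\ell}^{b}_{\mathrm{fil}}$ are of course the same category (Rees construction), so the two arguments converge, but the paper's detour through filtered representations is designed to work uniformly for non-algebraically-closed $\F$, whereas your formality/tilting argument exploits precisely the vanishing of higher Exts that is special to the algebraically closed case. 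Your approach is more direct here; the paper's buys generality.

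One caution on your main route: the paper explicitly flags that upgrading the triangulated equivalence to a \emph{monoidal} one is delicate (see the discussion before Proposition~\ref{dtm-tt-equivalence}), and sidesteps this by proving only a bijection on tt-ideals. Your formality argument gives a triangulated equivalence matching the generators $\one(n)$; to conclude about tt-spectra you should either supply the monoidal compatibility carefully, or observe (as the paper does) that in both categories the tt-ideals are exactly the thick subcategories stable under $-\otimes\one(n)$, so a triangulated equivalence respecting these generators already induces a bijection on tt-ideals.

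Your alternative route via $\rho^{\bullet}$ has a genuine gap. Balmer's surjectivity result gives you a prime over each of $(0)$ and $(\beta)$, but nothing you wrote bounds the fibers from above; you still need to know there are at most two primes in total, which is the whole content of the theorem. (Also, the labeling is reversed: under $\rho^{\bullet}(\mathfrak{P})=\{r:\cone(r)\notin\mathfrak{P}\}$, the kernel $\ker(\reet_{\ell})=\langle\cone(\beta)\rangle$ maps to $(0)$, not to $(\beta)$, and the zero ideal maps to $(\beta)$.)
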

In the proof of this theorem we use on the one hand Positselski's description of Tate motives in terms of filtered Galois representations~\cite{positselski:artin-tate-motives}, and on the other hand our study of the tt-geometry of filtered modules in~\cite{gallauer:tt-fmod}. Some of Balmer's new results in tt-geometry~\cite{balmer:surjectivity} allow us then to patch our findings together with the findings for rational coefficients by Peter~\cite{peter:spectrum-damt}, to determine the underlying set of the tt-spectrum in \cref{intro-dtm-Qbar}.

Some interesting properties of Tate motives turn out to be encoded in the tt-spectrum in a slightly subtle way. Indeed, the topology of the tt-spectrum reflects the conservativity of the $\ell$-adic realization as well as the fact that the motivic cohomology of (Tate) motives behaves like a finitely generated abelian group (see \cref{uct-intuition}). 

The theory of étale motives is closely related to the theory of motives---for example the two theories coincide with rational coefficients. We study étale motives both for their own interest, and because we will use the comparison of the two theories to bear on our understanding of Tate motives. This works because with finite coefficients the étale theory is much simpler: the Rigidity Theorem of Suslin and Voevodsky identifies étale motives with Galois representations. We will use this result together with Rost, Voevodsky and others' resolution of the Bloch-Kato conjecture to confirm this simplicity in tt-geometric terms (cf.~\cref{spec-dmet-ell}).
\begin{thm*}
  Let $\F$ be a field, and $\ell$ a prime number invertible in $\F$. Assume that $\F$ contains a primitive $\ell$th root of unity (respectively, primitive $4$th root of unity if $\ell=2$). Then the tt-spectrum of $\dmet(\F,\Z/\ell)$ has a single point.
\end{thm*}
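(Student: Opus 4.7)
The plan is to combine the Suslin--Voevodsky rigidity theorem with the Bloch--Kato norm residue theorem, and then to invoke Balmer's comparison map.

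By rigidity, $\dmet(\F, \Z/\ell)$ is tt-equivalent to a suitable small version of the derived category of continuous $\Z/\ell[G_\F]$-modules, where $G_\F$ is the absolute Galois group of $\F$. Under this equivalence the Tate twist $\Z/\ell(j)$ corresponds to $\mu_\ell^{\otimes j}$, and the root-of-unity hypothesis trivializes all of these as Galois modules. Consequently, every Tate twist is isomorphic to $\one$, and the Picard-graded endomorphism ring of the unit collapses to the singly-graded Galois cohomology ring $\Hm^{\bullet}(G_\F, \Z/\ell)$.

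By Bloch--Kato (Rost--Voevodsky) this ring is generated in degree $1$ by Kummer symbols. For odd $\ell$, graded commutativity forces every degree-$1$ class to square to zero (since $x \cup x = - x \cup x$ for $x \in \Hm^1$ and $2$ is invertible in $\Z/\ell$); for $\ell = 2$, the hypothesis $\mu_4 \subset \F$ makes the class $\{-1\} = 2\{\sqrt{-1}\}$ vanish in $\Hm^1(G_\F, \Z/2)$, and the identity $x \cup x = \{-1\} \cup x$ for $x \in \Hm^1$ again kills all squares of degree-$1$ classes. A standard pigeonhole argument on products of a fixed finite set of degree-$1$ generators then shows every element of $\Hm^{>0}$ is nilpotent. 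Hence the reduced graded ring is just $\Z/\ell$ concentrated in degree $0$, and $\spech(\End^{\bullet}(\one)) = \{*\}$.

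Finally, Balmer's comparison map $\rho \colon \spc(\dmet(\F, \Z/\ell)) \to \spech(\End^{\bullet}(\one))$ is surjective by \cite{balmer:surjectivity}, so the tt-spectrum is non-empty and maps onto a single point. The main obstacle is upgrading this surjectivity to a bijection, i.e.\ to rule out the existence of several primes above the unique homogeneous prime. This can be handled either by exploiting the nilpotence established above together with Balmer's refinements of the comparison map, or by the supplementary observation that base change along $\F \hookrightarrow \overline{\F}$ defines a conservative tt-functor $\dmet(\F, \Z/\ell) \to \dmet(\overline{\F}, \Z/\ell) \simeq \mathrm{D}^b(\Z/\ell\text{-mod})$ whose target visibly has a one-point tt-spectrum, forcing $\spc(\dmet(\F, \Z/\ell))$ to have at most one point.
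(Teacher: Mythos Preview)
Your overall strategy---rigidity, Bloch--Kato, and Balmer's comparison map---matches the paper's, and your nilpotence argument for $\Hm^{>0}(G_\F,\Z/\ell)$ is essentially \cref{spec-milnor}. The problem lies in your final step, where you need $\spc(\dmet(\F,\Z/\ell))$ to have \emph{at most} one point.

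Neither of your two proposed routes works as stated. Option~1 is too vague: knowing that the graded central ring has a single homogeneous prime does not by itself force $\rho^\bullet$ to be injective, and there is no general theorem of Balmer to that effect. Option~2 contains a genuine error: conservativity of a tt-functor $F\colon\T\to\T'$ does \emph{not} imply that $\spc(\T)$ has at most as many points as $\spc(\T')$. For a counterexample, let $R$ be a discrete valuation ring with residue field~$k$; then reduction $\D{R}^{\mathrm{perf}}\to\D{k}^{\mathrm{perf}}$ is conservative by Nakayama, yet $\spc$ of the source has two points while the target has one. Conservativity only tells you that $\ker(F)=0$ is a prime of~$\T$, i.e.\ that the unique point of $\spc(\T')$ lands on the zero ideal; it says nothing about other primes of~$\T$ not in the image of~$\spc(F)$. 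To salvage the base-change idea you would need $\spc(F)$ to be \emph{surjective}, which by~\cite{balmer:surjectivity} amounts to the unit lying in the tt-ideal generated by the pushforward of~$\one$ along $\overline{\F}/\F$---a nontrivial claim you have not addressed, and one that is delicate when $\overline{\F}/\F$ is of infinite degree.

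The paper fills this gap differently (\cref{finite-to-constructible-spectra} and \cref{spec-profinite}). It passes from $\D{G_\F,\Z/\ell}^{b}_{c}$ to $\D{\mdfin(G_\F,\Z/\ell)}^{b}$ via an essentially surjective functor (hence injective on~$\spc$), writes the latter as a filtered colimit over the finite quotients $G_i$ of~$G_\F$, and invokes the classical stratification result that $\rho^\bullet$ is a homeomorphism for $\D{\mdfin(G_i,\Z/\ell)}^{b}$ when $G_i$ is finite. Passing to the inverse limit on spectra yields the homeomorphism for the profinite~$G_\F$, and the single-point conclusion follows.
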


When the tt-category is rigid, \ie every object has a strong tensor dual, then the tt-spectrum can be endowed with a natural structure sheaf turning it into a locally ringed space. This has been used by Balmer to, for example, recover a topologically noetherian scheme from its category of perfect complexes~\cite[6.3]{balmer:spectrum}. Over fields such as the algebraic numbers, where we know the underlying topological space of the tt-spectrum of Tate motives completely by \cref{intro-dtm-Qbar}, we will describe this sheaf explicitly (cf. \cref{spc-sheaf}). To do so we need to understand the category one obtains from Tate motives by inverting the image of the Bott elements $\beta:\Z/\ell(0)\to\Z/\ell(1)$ appearing in \cref{intro-dtm-ell} (it amounts to the choice of a primitive $\ell$th root of unity). It has been shown by Haesemeyer and Hornbostel in~\cite{haesemeyer-hornbostel:bott} that (under some assumptions on the base field), by inverting motives with finite coefficients with respect to the Bott element one obtains étale motives. We will upgrade this result to integral coefficients (cf.~\cref{bott-invert}).
\begin{thm*}
  Let $\F$ be a field of exponential characteristic $p$, containing all roots of unity of order coprime to $p$, and of finite $\ell$-cohomological dimension for all primes $\ell\neq p$. Then there are canonical equivalences of tt-categories
  \begin{align*}
    \DM(\F,\Z[1/p])/\langle\cone(\beta_{\ell})\mid\ell\neq p\rangle^{\oplus}&\xrightarrow{\simeq}\DMet(\F,\Z[1/p])\\
    \left(
      \dm(\F,\Z[1/p])/\langle\cone(\beta_{\ell})\mid\ell\neq p\rangle
\right)^{\natural}&\xrightarrow{\simeq}\dmet(\F,\Z[1/p]).
  \end{align*}
  The same result holds for the effective versions.
\end{thm*}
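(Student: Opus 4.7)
The plan is to first establish the equivalence at the level of big tt-categories and then pass to the compact objects via idempotent completion. To construct the canonical functor, recall that the étalification $\aet^{*}\colon \DM(\F,\Z[1/p]) \to \DMet(\F,\Z[1/p])$ is a smashing Bousfield localization that sends each Bott map $\beta_{\ell}$ to an isomorphism, so $\cone(\beta_{\ell})$ lies in $\ker(\aet^{*})$ for every prime $\ell\neq p$. By the universal property of Verdier quotients this induces a tt-functor
\[
F\colon\DM(\F,\Z[1/p])/\langle\cone(\beta_{\ell})\mid\ell\neq p\rangle^{\oplus}\longrightarrow\DMet(\F,\Z[1/p]),
\]
and the theorem amounts to showing that $F$ is an equivalence.

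Next, I would reduce modulo primes via the arithmetic fracture for $\Z[1/p]$-modules: an object of either side vanishes iff its rationalisation vanishes and its $\Z/\ell$-reductions vanish for every prime $\ell\neq p$. Since both functors commute with these base-changes, it suffices to verify that $F\otimes\Q$ and $F\otimes\Z/\ell$ are equivalences. Rationally every $\cone(\beta_{\ell})$ dies, and the quotient becomes $\DM(\F,\Q)\simeq\DMet(\F,\Q)$ by the classical rational comparison. With $\Z/\ell$-coefficients, the generators $\cone(\beta_{\ell'})$ for $\ell'\neq\ell$ become zero (they are $\ell'$-torsion and we are now working $\ell$-linearly), so the quotient reduces to $\DM(\F,\Z/\ell)/\langle\cone(\beta_{\ell})\rangle^{\oplus}$, which Haesemeyer--Hornbostel~\cite{haesemeyer-hornbostel:bott} identify with $\DMet(\F,\Z/\ell)$ under our hypotheses on roots of unity and $\ell$-cohomological dimension.

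Finally, I would pass to the geometric subcategories. Since each $\cone(\beta_{\ell})$ is compact in $\DM(\F,\Z[1/p])$, the localizing ideal $\langle\cone(\beta_{\ell})\mid\ell\neq p\rangle^{\oplus}$ is compactly generated, and Thomason's version of the Neeman localization theorem identifies the compact objects of the big quotient with the idempotent completion of the thick quotient $\dm(\F,\Z[1/p])/\langle\cone(\beta_{\ell})\mid\ell\neq p\rangle$. Combined with the big equivalence this yields the second stated equivalence. The effective analogue follows by the same argument, using that $\aet^{*}$ restricts to a smashing localization on the effective subcategories and that the Bott maps live there.

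The main obstacle I anticipate is the careful handling of the $\Z[1/p]$-local arithmetic fracture through the Verdier quotient: one must verify that base change along $\otimes\Q$ and $\otimes\Z/\ell$ commutes with the localization at $\langle\cone(\beta_{\ell})\mid\ell\neq p\rangle^{\oplus}$, and that the prime-by-prime Haesemeyer--Hornbostel equivalences assemble into an integral statement. Once this compatibility is established, the integral theorem becomes essentially formal, and the effective case follows by the same recipe.
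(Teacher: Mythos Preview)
Your overall strategy matches the paper's: reduce the big equivalence to a rational piece (where $\aet$ is already an equivalence) and a torsion piece (where Haesemeyer--Hornbostel applies), then deduce the compact version from Thomason--Neeman. The implementation, however, differs in a way that matters.

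The paper does not invoke an abstract arithmetic fracture square on the quotient category. Instead it proves full faithfulness directly on hom-sets: for $M$ compact and $N$ arbitrary, it applies $\hom(M,-)$ to the triangle $N\to N\otimes\Q\to N\otimes\Q/R\to N[1]$ and treats the two outer terms separately. For the rational term it checks by hand the identity $q\ff\bc=\ffo\bco q$ (i.e.\ that ``tensor with $\Q$'' commutes with the Verdier localization), which amounts to showing $\ff\bc N\in\mathcal{B}^{\perp}$. For the torsion term it writes $\Q/R=\bigoplus_{\ell}R[\ell^{\infty}]$ with $R[\ell^{\infty}]=\hocolim_{n}R/\ell^{n}$, so one is forced to work with $\Z/\ell^{n}$-coefficients for all $n\geq 1$, not just $\Z/\ell$. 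This is where the computations of \cref{sec:bott-coefficients} enter: one needs that $\bc$ carries $\mathcal{B}$ into $\mathcal{B}_{\ell^{n}}=\langle\cone(\beta_{\ell^{n}})\rangle^{\oplus}$ (\cref{Nn}) and conversely that $\ff$ carries $\mathcal{B}_{\ell^{n}}$ back into $\mathcal{B}$ (\cref{bott-nN-integrally}), so that the change-of-coefficients adjunction descends to the quotients (\cref{triangulated-adjunction}). These explicit-model calculations with Bott elements at prime-power levels are the genuine technical content; your sketch, which only mentions $\Z/\ell$, does not account for them.

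You correctly flag the compatibility of base change with the Verdier quotient as the crux. The paper resolves it not by a general fracture principle but by these two concrete inclusions of localizing subcategories. If you want to run your version, you would still need to prove that $\DM(\F,R)/\mathcal{B}$ base-changed to $\Z/\ell$ agrees with $\DM(\F,\Z/\ell)/\langle\cone(\beta_{\ell})\rangle^{\oplus}$, and unwinding this is essentially the same pair of lemmas (plus the observation that $\cone(\beta_{\ell'})$ dies mod $\ell$ for $\ell'\neq\ell$, which you do note). So your route is not wrong, but the ``obstacle you anticipate'' is exactly where the paper's work lies, and it is not merely formal.
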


\hypersetup{bookmarksdepth=-2}
\addtocontents{toc}{\SkipTocEntry}
\section*{Acknowledgment}
\hypersetup{bookmarksdepth}
I would like to thank Simon Pepin Lehalleur for bringing~\cite{positselski:artin-tate-motives} to my attention which was the starting point for this project. Many thanks to Paul Balmer, Shane Kelly, and Simon Pepin Lehalleur for their valuable input on an earlier version of this article, and to Denis-Charles Cisinski for pointing out a flaw in an argument and the connection to the conservativity of the $\ell$-adic realizations. I'm also grateful to Joseph Ayoub for an encouraging discussion concerning \cref{sec:bott}.

\section{Conventions}
\label{sec:conventions}
Our conventions regarding tensor triangular geometry mostly follow those of~\cite{balmer:spectrum,balmer:sss}. A \emph{tensor triangulated category} (or \emph{tt-category} for short) is a triangulated category with a compatible (symmetric, unital) tensor structure. If not specified otherwise, the tensor product is denoted by $\otimes$ and the unit by~$\one$. A \emph{tt-functor} is an exact tensor functor between tt-categories.

A \emph{tt-ideal} in a tt-category $\T$ is a thick subcategory $\I\subset \T$ such that~$\T\otimes \I\subset\I$. If $S$ is a set of objects in $\T$ we denote by $\langle S\rangle$ the tt-ideal generated by~$S$. To a small tt-category $\T$ one associates a ringed space $\spec(\T)$, called the \emph{tt-spectrum of $\T$}, whose underlying topological space is denoted by $\spc(\T)$. It is a spectral space and consists of \emph{prime ideals} in $\T$, \ie proper tt-ideals $\I$ such that $a\otimes b\in\I$ implies $a\in\I$ or~$b\in\I$. A base for the closed subsets of the topology is given by the supports $\supp(a)$ of objects $a\in\T$; here, $\supp(a)=\{\mathfrak{P}\mid a\notin\mathfrak{P}\}$. The complement of $\supp(a)$ is denoted by $U(a)$. If $\T$ is rigid then $\spec(\T)$ is a \emph{locally} ringed space.\footnote{We will say a bit more about the association of $\T\mapsto\spec(\T)$ in \cref{sec:tt-sheaf}.}

All rings are commutative with unit, and morphisms of rings are unital. For $R$ a ring, we denote by $\spec(R)$ the Zariski spectrum of $R$ (considered as a locally ringed space) whereas $\spc(R)$ denotes its underlying topological space (as for the tt-spectrum). We adopt similar conventions regarding graded rings $R$: they are commutative in a general graded sense~\cite[3.4]{balmer:sss}, and possess a unit. $\spech(R)$ denotes the homogeneous (Zariski) spectrum with underlying topological space~$\spch(R)$. This differs from $\mathrm{Proj}(R)$ in that the prime ideals making up the underlying space may contain the irrelevant ideal in $R$. It is still a spectral space though.

Recall also that Balmer constructs~\cite{balmer:sss} comparison maps between the tt-spectrum and certain Zariski spectra. Explicitly, there is a canonical spectral morphism $\rho:\spec(\T)\to\spec(\R_{\T})$, where $\R_{\T}=\End_{\T}(\one)$ denotes the endomorphism ring of the unit in $\T$, called the \emph{central ring}. More generally, fixing an invertible object $u\in\T$, he considers the \emph{graded central ring} $\R^{\bullet}_{\T}=\hom_{\T}(\one,u^{\otimes\bullet})$. There is then a canonical spectral morphism $\rho^{\bullet}:\spec(\T)\to\spech(\R_{\T}^{\bullet})$, given by
\begin{equation*}
  \rho^{\bullet}(\mathfrak{P})=\{r\in R^{\bullet}_{\T}\text{ homogeneous }\mid\cone(r)\notin\mathfrak{P}\}.
\end{equation*}
The map $\rho$ is just the restriction of $\rho^{\bullet}$ to the degree 0 part. We will repeatedly use the fact that $\rho$ (respectively $\rho^{\bullet}$) is a  homeomorphism if and only if it is an isomorphism of locally ringed spaces \cite[6.11]{balmer:sss}.

\section{Triangulated category of (Tate) motives}
\label{sec:DTM}

Fix a ring $R$ and a field $\F$. In these two preliminary sections (\cref{sec:DTM} and~\ref{sec:DTMet}) we are going to recall some generalities on categories of motives over $\F$ with coefficients in $R$. Many of the constructions and proofs go back to work of Voevodsky, Suslin, Bloch, Levine, and many others. Our discussion will be too brief for some---we recommend~\cite{voevodsky00-mm,MVW-motcoh} as introductions instead. As main reference for this section we will use~\cite{cisinski-deglise:dm}.

There is a (large) tt-category of ``big'' motives $\DM(\F,R)$~\cite[11.1.1, 11.1.2]{cisinski-deglise:dm} constructed from the derived category of Nisnevich sheaves with transfers of $R$-modules on the category $\sm/\F$ of smooth (finite type, separated) $\F$-schemes. In particular, it comes with an ``associated motive'' functor
\begin{align*}
  R(-):\sm/\F&\to\DM(\F,R)\\
  X&\mapsto R(X).
\end{align*}
The tensor structure on $\DM(\F,R)$ is determined by two facts:
\begin{itemize}
\item The functor $R(-)$ is symmetric unital monoidal (also with respect to transfers), for example $R(X\times_{\spec{\F}}Y)=R(X)\otimes R(Y)$, and $R(\spec(\F))=:R(0)$ is the unit. (If the context allows we will write simply $R$ for $R(0)$.)
\item The reduced motive of $\mathbb{P}^{1}_{\F}$, denoted by $R(1)[2]$, is ($\otimes$-)invertible. One then gets for any integer $n$ an invertible $R(n)$, called the \emph{Tate twist of weight $n$}. Clearly, $R(i)\otimes R(j)=R(i+j)$, and the dual of $R(i)$ is $R(-i)$.
\end{itemize}
The triangulated category $\DM(\F,R)$ is compactly generated, and a set of compact generators is given by the motives of smooth $\F$-schemes~\cite[11.1.6]{cisinski-deglise:dm}. In fact, in a sense one can make precise, $\DM(\F,R)$ is generated by $R(X)$ ($X$ runs through smooth $\F$-schemes) and $R(-1)$, subject to Nisnevich descent and the relations $R(1)\otimes R(-1)=R(0)$, $R(\mathbb{A}^{1}_{\F})=R(0)$. We denote the subcategory of compact objects by $\dm(\F,R)$. Its objects are often called \emph{geometric motives}, or just \emph{motives} if no confusion with big motives is possible (or if the distinction in the given context is immaterial). By what was said above, $\dm(\F,R)$ is the thick subcategory generated by $R(X)(n):=R(X)\otimes R(n)$, where $X$ is a smooth $\F$-scheme and $n\in\Z$. It is a (small) idempotent complete tt-category. Moreover, it is rigid if the exponential characteristic\footnote{Recall that the exponential characteristic of $\F$ is 1 if $\car(\F)=0$, and $p$ if $\car(\F)=p>0$.} of $\F$ is invertible in $R$ [\citealp[5.3.18]{kelly:dm-ldh}; \citealp[8.1]{cisinski-deglise:integral-mixed-motives}]. (Conjecturally, rigidity also holds without inverting the exponential characteristic as shown in~\cite{voevodsky00-mm}.)

The \emph{triangulated category of Tate motives} is the thick subcategory generated by the Tate twists $R(n)$, $n\in\Z$. It is denoted by $\dtm(\F,R)$. It is a (small) rigid, idempotent complete tt-category. There is also a ``big'' version: $\DTM(\F,R)$ denotes the localizing subcategory of $\DM(\F,R)$ generated by Tate twists. It is a (large) tt-category.

Given a ring morphism $R\to R'$, one can associate to a Nisnevich sheaf with transfers of $R$-modules $F$ the sheafification of $X
\mapsto F(X)\otimes_{R}R'$, a Nisnevich sheaf with transfers of $R'$-modules. This induces an adjunction
\begin{equation*}
  \bc:\DM(\F,R)\rightleftarrows\DM(\F,R'):\ff,
\end{equation*}
the right adjoint being induced by forgetting the $R'$-structure. The functor $\ff$ is conservative~[see the proof of \citealp[A.6]{ayoub:etale-realization} or \citealp[5.4.2]{cisinski-deglise:etale-motives}], and for any motive $M\in\DM(\F,R)$, we have $\ff\bc(M)=M\otimes R'$, where $R'$ is the constant sheaf associated to $R'$, considered as an object in $\DM(\F,R)$. The tt-functor $\bc$ sends $R(X)(n)$ to $R'(X)(n)$ and therefore restricts to tt-functors
\begin{align}\label{coefficients-adjunction-compact}
  \dm(\F,R)\to\dm(\F,R'),&& \dtmDTM(\F,R)\to\dtmDTM(\F,R').
\end{align}
If $R'$ is a perfect $R$-module (\ie $R'\in\D{R}^{\mathrm{perf}}$) then the right adjoint $\ff$ also preserves compact objects.

The hom sets in the triangulated category of motives are closely related to important algebraic geometric invariants. Specifically, for a smooth $\F$-scheme $X$ and integers $m,n$, the groups
\begin{equation}\label{motivic-cohomology}
  \Hmm^{m,n}(X,R):=\hom_{\DM(\F,\Z)}(\Z(X),R(n)[m])=\hom_{\DM(\F,R)}(R(X),R(n)[m])
\end{equation}
are the \emph{motivic cohomology groups} of $X$ with coefficients in $R$. There is a canonical identification \cite{voevodsky:motcoh-CH},
\begin{equation}\label{higher-chow-groups}
  \Hmm^{m,n}(X,R)=\mathrm{CH}^{n}(X,2n-m;R),
\end{equation}
with a direct generalization of Chow groups, called Bloch's higher Chow groups, thereby linking motivic cohomology to algebraic cycles. In particular, the motivic cohomology groups of $\spec(\F)$ on the ``diagonal'' are canonically isomorphic to Milnor $K$-theory~\cite{totaro:milnor-k-theory,nesterenko-suslin:milnor-k-theory}:
\begin{equation}\label{milnor-k-theory}
  \Hmm^{n,n}(\spec(\F),R)=K_{n}^{\scriptscriptstyle\mathrm{M}}(\F)\otimes R.
\end{equation}
More generally, we define for any motive $M\in\DM(\F,R)$ and any ring $R'$ over $R$,
\begin{equation*}
  \Hmm^{m,n}(M,R'):=\hom_{\DM(\F,R)}(M,R'(n)[m])=\hom_{\DM(\F,R')}(\bc M,R'(n)[m])
\end{equation*}
the \emph{motivic cohomology groups of $M$ with coefficients in $R'$}.

\section{Triangulated category of étale (Tate) motives}
\label{sec:DTMet}

We want to discuss étale versions of the constructions in the previous section. Thus instead of Nisnevich sheaves with transfers we consider étale sheaves with transfers. Our main references for this section are~\cite{ayoub:icm,ayoub:etale-realization,cisinski-deglise:etale-motives}.

There is a (large) tt-category of ``big'' étale motives $\DMet(\F,R)$ [\citealp[\S4.1.1]{ayoub:icm}; \citealp[2.2.4]{cisinski-deglise:etale-motives}] constructed from the derived category of étale sheaves with transfers of $R$-modules on the category $\sm/\F$ of smooth (finite type, separated) $\F$-schemes. In particular, it comes again with a symmetric unital monoidal ``associated étale motive'' functor
\begin{align*}
  R^{\et}(-):\sm/\F&\to\DMet(\F,R)\\
  X&\mapsto R^{\et}(X).
\end{align*}
In contrast to the situation of the previous section, the image is in general not compact, and $\DMet(\F,R)$ is in general not compactly generated (the reason being that the étale cohomological dimension of $\F$ can be infinite). We denote the thick subcategory generated by $R^{\et}(X)(n)$ for $X$ smooth and $n$ an integer by $\dmet(\F,R)$. It is called the \emph{triangulated category of geometric étale motives}~\cite[4.3]{ayoub:icm}. It is a (small) idempotent complete tt-category. Moreover, for many coefficient rings $R$ (including any localization or quotient of $\Z$) it is rigid, by~\cite[6.3.26]{cisinski-deglise:etale-motives} and \cref{etale-motives-models} below. The \emph{triangulated category of étale Tate motives} $\dtmet(\F,R)$ is the thick subcategory generated by the Tate twists $R^{\et}(n)$, $n\in\Z$. It is a (small) rigid, idempotent complete tt-category.

\begin{rmk}\label{etale-motives-models}
  There are at least two other models for the categories just introduced. One uses étale sheaves \emph{without} transfers~\cite{ayoub:icm,ayoub:etale-realization}, the other uses $\mathrm{h}$-sheaves on the category of finite type $\F$-schemes~\cite[5.1.3]{cisinski-deglise:etale-motives}. That they indeed coincide up to canonical equivalence follows for example from~[\citealp[B.1]{ayoub:etale-realization} and~\citealp[5.5.5]{cisinski-deglise:etale-motives}]. When citing results from the literature about étale motives we will therefore freely use any of the three models.
\end{rmk}

One can define analogous \emph{étale motivic cohomology groups}, and these again are closely related to algebraic cycles~[\citealp[4.12]{ayoub:icm}; \citealp[7.1.2]{cisinski-deglise:etale-motives}]---except that one loses all $p$-torsion information if $\car(\F)=p>0$. Indeed, the categories of étale motives just introduced are all $\Z[1/p]$-linear. This follows from the existence of the Artin-Schreier sequence of étale sheaves
  \begin{equation*}
    0\to \Z/p\Z\to\mathbb{G}_{a}\xrightarrow{F_{p}-1}\mathbb{G}_{a}\to 0,
  \end{equation*}
  where $F_{p}$ denotes the Frobenius. It induces a triangle in $\dtmet(\F,R)$, and since $F_{p}-1$ induces an isomorphism on $R^{\et}=R^{\et}(\mathbb{G}_{a})$ in $\dmet(\F,R)$, multiplication by $p$ is an automorphism.

As before, a ring morphism $R\to R'$ induces tt-functors~[\citealp[A.2]{ayoub:etale-realization}; \citealp[5.4.1]{cisinski-deglise:etale-motives}]
\begin{align*}
  \bc:\dmet(\F,R)\to\dmet(\F,R'),&&\bc:\dtmet(\F,R)\to\dtmet(\F,R'),
\end{align*}
and again, if $R'$ is perfect over $R$ then they admit a right adjoint $\ff$, in which case one has $\ff\bc(M)=M\otimes R'$.

Fix a prime number $\ell$ which is invertible in $\F$, and consider the category $\Md(G_{\F},\Z/\ell)$ of discrete $G_{\F}$-modules over $\Z/\ell$, where $G_{\F}$ denotes the absolute Galois group of $\F$. Its derived category is denoted by $\D{G_{\F},\Z/\ell}$. There is an étale realization functor~[\citealp{ivorra:l-adic-real-1}; \citealp[5.2]{ayoub:etale-realization}; \citealp[7.2]{cisinski-deglise:etale-motives}]
\begin{equation*}
  \reet_{\ell}:\dmet(\F,\Z)\to\D{G_{\F},\Z/\ell}_{c}^{b}
\end{equation*}
to the subcategory of $\D{G_{\F},\Z/\ell}$ spanned by complexes with bounded, finite dimensional cohomology. This is a tt-functor which moreover factors through a fully faithful tt-functor~\cite[5.5.4]{cisinski-deglise:etale-motives}, abusively denoted by the same symbol,
\begin{equation}\label{rigidity}
  \reet_{\ell}:\dmet(\F,\Z/\ell)\to\D{G_{\F},\Z/\ell}^{b}_{c}.
\end{equation}
This is a form of the \emph{Rigidity Theorem} of Suslin and Voevodsky. \cref{rigidity} is an equivalence if $\F$ is of finite $\ell$-cohomological dimension.

Finally, there is a canonical étale sheafification tt-functor
\begin{equation}\label{etale-sheafification}
  \aet:\dm(\F,R)\to \dmet(\F,R),
\end{equation}
which takes $R(X)(n)$ to $R^{\et}(X)(n)$, restricts to a corresponding tt-functor on Tate (respectively, étale Tate) motives, and is compatible with change of coefficients. We continue to denote by $\reet_{\ell}$ the composition $\reet_{\ell}\circ\aet$ whenever this makes sense. Essentially because higher Galois cohomology is torsion, the étale sheafification induces an equivalence
\begin{equation}
  \label{rational-equivalence}
  \aet:\dm(\F,R)\xrightarrow{\sim} \dmet(\F,R)
\end{equation}
whenever $\Q\subset R$~\cite[16.1.2]{cisinski-deglise:dm}.

\section{Torsion and finite generation in motivic cohomology}
\label{sec:motivic-cohomology}

In this section we collect some basic properties concerning rational and mod-$\ell$ motivic cohomology for (Tate) motives. The discussion will culminate in the proof that the motivic cohomology groups of Tate motives behave ``as if they were finitely generated''. We refer to \cref{uct-intuition} for elaboration.

For the reader unfamiliar with the objects introduced in the previous two sections, this is a good opportunity to get better acquainted with the formalism. Throughout the section, $\F$ is an arbitrary field.

We start with a result characteristic of Tate motives.
\begin{lem}\label{cellularity-property}
  Let $M\in\dtm(\F,R)$ be a Tate motive. The following are equivalent:
  \begin{enumerate}
  \item $M=0$,
  \item $\Hmm^{\bullet,\bullet}(M,R)=0$.
  \end{enumerate}
\end{lem}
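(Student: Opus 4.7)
The direction $(1)\Rightarrow(2)$ is immediate from the definition of motivic cohomology. For $(2)\Rightarrow(1)$ my plan is to exploit the fact that $\dtm(\F,R)$ is, by very definition, the thick subcategory of $\DM(\F,R)$ generated by the Tate twists $R(n)$, $n\in\Z$. The whole proof is then a one-line application of the ``generators detect vanishing'' principle, with no serious obstacle to overcome.

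Concretely, I would introduce the full subcategory
\[
\I:=\{N\in\dtm(\F,R)\mid \hom(M,N[k])=0\text{ for all }k\in\Z\}
\]
and check that it is a thick subcategory of $\dtm(\F,R)$. Closure under shifts is built into the definition; closure under cones follows from the long exact sequence obtained by applying $\hom(M,-)$ to a triangle in $\dtm(\F,R)$; and closure under direct summands is clear since a retract of a zero group is zero.

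Hypothesis (2) says precisely that $\hom(M,R(n)[m])=\Hmm^{m,n}(M,R)=0$ for all $m,n\in\Z$, i.e.\ that $R(n)\in\I$ for every $n\in\Z$. Since the Tate twists generate $\dtm(\F,R)$ as a thick subcategory, we get $\I=\dtm(\F,R)$; in particular $M\in\I$. Taking $N=M$ and $k=0$ yields $\id_M=0$ in $\hom(M,M)$, hence $M=0$. (One could alternatively invoke rigidity of $\dtm(\F,R)$ to pass to $M^{\vee}$ and phrase the same argument with Tate twists as the source; but since the above argument already works directly, that seems unnecessary.)
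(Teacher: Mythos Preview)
Your proof is correct and is essentially the same argument as the paper's: both exploit that the Tate twists generate $\dtm(\F,R)$ as a thick subcategory, so vanishing of all $\hom$'s against them forces vanishing of the object. The only cosmetic difference is that the paper first dualizes (using rigidity) to put the generators in the \emph{source} and conclude $M^{\vee}=0$, whereas you keep the generators in the \emph{target} and run the thick-subcategory argument directly; as you yourself note in the parenthetical, these are interchangeable, and your version has the minor advantage of not invoking rigidity.
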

\begin{proof}
  If $M$ has trivial motivic cohomology groups $\Hmm^{m,n}(M,R)$ for all integers $m,n$, this means that for the dual $M^{\vee}$ of $M$, we have
  \begin{equation*}
    \hom_{\dtm(\F,R)}(R(-n)[-m],M^{\vee})=\hom_{\dtm(\F,R)}(M,R(n)[m])=\Hmm^{m,n}(M,R)=0
  \end{equation*}
  for all integers $m,n$. Since the set of Tate twists generates $\dtm(\F,R)$ we see that $M^{\vee}=0$ and this implies $M=0$.
\end{proof}

\begin{con}
  From now on and until the end of the section we assume that $R$ is a localization of $\Z$ such that $\Z\subset R\subset\Q$.
\end{con}

Let $F:\dtm(\F,R)\to\T$ be a tt-functor, and $M\in\dtm(\F,R)$ a Tate motive. We say that $M$ is \emph{$F$-acyclic} if $F(M)=0$. The $F$-acyclic objects clearly define a tt-ideal in $\dtm(\F,R)$.

\begin{exa}\label{etale-cohomology}
  Suppose $\ell$ is a prime number invertible in $\F$. Consider the tt-functor $\reet_{\ell}:\dtm(\F,\Z)\to\D{G_{\F},\Z/\ell}_{c}^{b}$. The $\reet_{\ell}$-acyclic objects are those whose mod-$\ell$ étale cohomology vanishes.
\end{exa}

 If $A\in\DM(\F,R)$ is any motive, we say that $M$ is \emph{$A$-acyclic} if it is acyclic with respect to the functor $-\otimes A:\dtm(\F,R)\to\DM(\F,R)$. We will be most interested in $A=\Z/\ell$ ($\ell$ a prime) or $A=\Q$.

\begin{lem}\label{Q-localization}
 Let $S$ be the multiplicative subset $\Z\backslash 0$. In each case of motives, Tate motives, étale motives, or étale Tate motives, we have canonical equivalences of tt-categories
  \begin{equation*}
    \left(
      S^{-1}\dmgen(\F,R)
\right)^{\natural}=
\left(
  \dmgen(\F,R)\otimes\Q
\right)^{\natural}=\dmgen(\F,\Q),
  \end{equation*}
  where $(-)^{\natural}$ denotes the idempotent completion.
\end{lem}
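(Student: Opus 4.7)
The plan is to prove the two equivalences separately, leveraging the universal property of the localization $S^{-1}(-)$ on tt-categories together with the base-change adjunction $\bc\dashv\ff$ associated to the ring morphism $R\hookrightarrow\Q$.

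The first equivalence $S^{-1}\dmgen(\F,R)\simeq\dmgen(\F,R)\otimes\Q$ should follow from a general categorical observation: for any essentially small $\Z$-linear tt-category $\T$, both constructions coincide with the Verdier quotient of $\T$ by the tt-ideal generated by $\{\cone(s\cdot\id_{\one})\mid s\in S\}$. Indeed, inverting multiplication by $s$ on the unit forces the same on every object (by tensoring), and since $\Q=S^{-1}\Z$, $\Q$-linearizing the Hom groups is precisely achieved by Verdier-killing these cones.

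For the second equivalence, the starting point is the base-change tt-functor $\bc\colon\dmgen(\F,R)\to\dmgen(\F,\Q)$ from~\eqref{coefficients-adjunction-compact}, which sends each $s\cdot\id_{\one}$ to an isomorphism. By the universal property of step 1, it factors through a canonical tt-functor
\begin{equation*}
\bar{\bc}\colon S^{-1}\dmgen(\F,R)\to\dmgen(\F,\Q).
\end{equation*}
For full faithfulness, I would write $\Q=\mathrm{colim}_{n\in S}R[1/n]$ and use that $\ff\bc N=N\otimes_{R}\Q$ on the big category $\DM(\F,R)$, together with compactness of $M\in\dmgen(\F,R)$, to compute
\begin{equation*}
\hom_{\dmgen(\F,\Q)}(\bc M,\bc N)=\hom_{\DM(\F,R)}(M,N\otimes\Q)=\mathrm{colim}_{n\in S}\hom(M,N)[1/n]=\hom(M,N)\otimes\Q,
\end{equation*}
which is exactly the Hom group in $S^{-1}\dmgen(\F,R)$. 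For essential surjectivity after idempotent completion, observe that the generators $\Q(X)(n)$ (respectively $\Q(n)$ in the Tate case, and their étale analogues) of $\dmgen(\F,\Q)$ all lie in the strict image of $\bc$. Hence the essential image generates the target as a thick subcategory, so the idempotent completion of $\bar{\bc}$ is essentially surjective. The four cases (motives, Tate motives, étale, étale Tate) proceed uniformly, using in each case the appropriate base-change from section~\ref{sec:DTM} or~\ref{sec:DTMet}.

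The main obstacle I expect is the Hom computation underlying full faithfulness: one needs the presentation $\Q=\mathrm{colim}\,R[1/n]$ to commute past $\hom(M,-)$, which relies crucially on $M$ being compact in the \emph{big} category $\DM(\F,R)$ (not merely in $\dmgen$), and on knowing that $\ff\bc N\simeq N\otimes_{R}\Q$ even though $\Q$ is not perfect over $R$. Once this compatibility is in hand, the rest of the argument is formal.
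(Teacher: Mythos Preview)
For the first equivalence your approach coincides with the paper's: both identify $S^{-1}\dmgen(\F,R)$ with $\dmgen(\F,R)\otimes\Q$ by recognizing that the Verdier localization at $\{\cone(s)\mid s\in S\}$ agrees with the naive hom-set localization (the paper cites~\cite[9.1]{ayoub:etale-realization} and~\cite[3.6]{balmer:sss}). For the second equivalence the paper simply invokes~\cite[11.1.5]{cisinski-deglise:dm} in the Nisnevich case and~\cite[5.4.9]{cisinski-deglise:etale-motives} in the étale case, whereas you unpack the argument explicitly; in the Nisnevich case your reasoning is correct and is essentially what underlies the cited result.

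There is, however, a genuine gap in the étale case. You assert that the four cases proceed uniformly, but the compactness hypothesis you yourself flag as crucial can fail for étale motives: as recalled in \cref{sec:DTMet}, the objects $R^{\et}(X)(n)$ need not be compact in $\DMet(\F,R)$ when $\F$ has infinite étale cohomological dimension, and $\dmet(\F,R)$ is defined as the thick subcategory they generate rather than as the subcategory of compact objects. Consequently the step $\hom(M,N\otimes\Q)=\operatorname{colim}\hom(M,N)$ is not justified for $M\in\dmet(\F,R)$ in general. This is precisely why the paper treats the étale case separately by citing~\cite[5.4.9]{cisinski-deglise:etale-motives}. To repair your argument you would need either to impose a finite-cohomological-dimension hypothesis on~$\F$, or to appeal to a continuity property of étale motives with respect to coefficient change that does not pass through compactness in the homotopy category.
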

\begin{proof}
  The categorical (or Verdier) localization at $S$ is the naive localization (\ie the category obtained by localizing each hom set), by~[\citealp[9.1]{ayoub:etale-realization}; \citealp[3.6]{balmer:sss}]. This gives the first equivalence. For the second equivalence, the étale version is~\cite[5.4.9]{cisinski-deglise:etale-motives}; the non-étale version is simpler, and can be found in~\cite[11.1.5]{cisinski-deglise:dm}.
\end{proof}

\begin{cor}\label{torsion}
  Let $M\in\dtm(\F,R)$ be a Tate motive. The following are equivalent:
  \begin{enumerate}
  \item $\Hmm^{\bullet,\bullet}(M,\Q)=0$.
  \item $M$ is $\Q$-acyclic.
  \item $M$ is $n$-torsion for some positive integer $n$, \ie $n\cdot\id_{M}=0$.
  \end{enumerate}  
\end{cor}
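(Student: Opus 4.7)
The plan is to split the equivalence into the cohomological equivalence $(1)\Leftrightarrow(2)$ and the algebraic equivalence $(2)\Leftrightarrow(3)$, each of which reduces quickly to results already in place.

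For $(1)\Leftrightarrow(2)$ I would pass through the change-of-coefficients functor $\bc\colon\dtm(\F,R)\to\dtm(\F,\Q)$. The adjunction formula recalled after~\eqref{motivic-cohomology} gives $\Hmm^{m,n}(M,\Q)=\hom_{\dtm(\F,\Q)}(\bc M,\Q(n)[m])$, so $(1)$ is exactly the vanishing of all motivic cohomology of the Tate motive $\bc M\in\dtm(\F,\Q)$; by \cref{cellularity-property} applied over $\Q$, this is equivalent to $\bc M=0$. On the other hand $M\otimes\Q=\ff\bc M$ and $\ff$ is conservative, so $(2)$ is also equivalent to $\bc M=0$.

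For $(3)\Rightarrow(2)$ I would apply $\bc$ to the relation $n\cdot\id_M=0$ to obtain $n\cdot\id_{\bc M}=0$ in the $\Q$-vector space $\End_{\dtm(\F,\Q)}(\bc M)$, forcing $\id_{\bc M}=0$ and hence $M\otimes\Q=\ff\bc M=0$. For the converse $(2)\Rightarrow(3)$ I would invoke \cref{Q-localization} to identify $\dtm(\F,\Q)\simeq\bigl(S^{-1}\dtm(\F,R)\bigr)^{\natural}$ with $S=\Z\setminus\{0\}$, the composite being (essentially) $\bc$. Fully faithfulness of the idempotent completion embedding then yields $M=0$ already in $S^{-1}\dtm(\F,R)$, and the crucial input is that this localization is the \emph{naive} one, so that $\End_{S^{-1}\dtm(\F,R)}(M)=S^{-1}\End_{\dtm(\F,R)}(M)$; vanishing of $\id_M$ in this $S$-localized ring is exactly the existence of a nonzero integer $s$ with $s\cdot\id_M=0$, which is $(3)$.

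The mild obstacle, if any, is this last step: one has to remember that the localization from \cref{Q-localization} is hom-set-by-hom-set in order to extract a concrete torsion witness in $\End_{\dtm(\F,R)}(M)$. Everything else follows formally from \cref{cellularity-property} and the change-of-coefficients adjunction.
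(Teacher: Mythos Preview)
Your proof is correct and follows essentially the same approach as the paper: the equivalence $(1)\Leftrightarrow(2)$ via the change-of-coefficients adjunction, \cref{cellularity-property}, and conservativity of $\ff$ is exactly the paper's argument, and your use of \cref{Q-localization} (including the naive-localization description of hom-sets) for $(2)\Leftrightarrow(3)$ simply unpacks what the paper states in one sentence.
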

\begin{proof}
  By the change of coefficients adjunction for $R\to\Q$, we have
  \begin{equation*}
    \hom_{\dtm(\F,\Q)}(\bc M,\Q(n)[m])=\Hmm^{m,n}(M,\Q),
  \end{equation*}
  and by \cref{cellularity-property} these groups all vanish if and only if $\bc M=0$. But $\ff$ is conservative (\cref{sec:DTM}) so this is equivalent to
  \begin{equation*}
    0=\ff\bc M=M\otimes\Q,
  \end{equation*}
  \ie it is equivalent to $M$ being $\Q$-acyclic. \cref{Q-localization} shows that the second and third condition are equivalent as well.
\end{proof}

The first part of the proof also gives the following result.
\begin{lem}\label{mod-ell-cohomology}
  Let $M\in\dtm(\F,R)$ be a Tate motive, and $\ell$ a prime number. The following are equivalent:
  \begin{enumerate}
  \item $\Hmm^{\bullet,\bullet}(M,R/\ell)=0$.
  \item $M$ is $R/\ell$-acyclic.
  \end{enumerate}
\end{lem}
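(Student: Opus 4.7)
The plan is to mimic, essentially verbatim, the first part of the proof of \cref{torsion}, replacing the change of coefficients $R\to\Q$ by the quotient map $R\to R/\ell$.

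More precisely, I would apply the change-of-coefficients adjunction $\bc:\dtm(\F,R)\rightleftarrows\dtm(\F,R/\ell):\ff$ discussed in \cref{sec:DTM}. The defining formula \eqref{motivic-cohomology} together with the adjunction gives
\begin{equation*}
  \hom_{\dtm(\F,R/\ell)}(\bc M,R/\ell(n)[m])=\Hmm^{m,n}(M,R/\ell)
\end{equation*}
for all integers $m,n$. Thus condition~(1) says precisely that the Tate motive $\bc M\in\dtm(\F,R/\ell)$ has trivial bigraded motivic cohomology, which by \cref{cellularity-property} (applied now over the coefficient ring $R/\ell$) is equivalent to $\bc M=0$.

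Next, since the forgetful functor $\ff$ is conservative, $\bc M=0$ is equivalent to $\ff\bc M=0$. Using the identification $\ff\bc M=M\otimes R/\ell$ recorded in \cref{sec:DTM}, this is exactly the statement that $M$ is $R/\ell$-acyclic, i.e.~condition~(2). This chain of equivalences proves the lemma.

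There is really no obstacle here: the argument is a direct transcription of the $\Q$-case in the proof of \cref{torsion}, and the only input specific to $R/\ell$ (as opposed to $\Q$) is that the adjunction $\bc\dashv\ff$ with conservative $\ff$ exists and satisfies the projection-type formula $\ff\bc M=M\otimes R/\ell$, which is exactly what was stated in \cref{sec:DTM}. The third equivalence from \cref{torsion} (torsion $\Leftrightarrow$ $\Q$-acyclic, via \cref{Q-localization}) has no analogue here, which is why the present lemma only lists two conditions.
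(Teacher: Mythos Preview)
Your proposal is correct and is exactly the approach indicated by the paper, which simply remarks that ``the first part of the proof [of \cref{torsion}] also gives the following result.'' You have spelled out precisely this transcription of the $\Q$-case to the $R/\ell$-case.
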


\begin{rmk}\label{uct-intuition}
  Among finitely generated abelian groups there is a stark divide between finite and infinite ones. This simple observation is used in many arguments in algebraic topology which involve the universal coefficient theorem. More explicitly, if $X$ is a topological space with finitely generated cohomology groups $\Hm^{\bullet}(X,\Z)$ then there is the following dichotomy:
  \begin{itemize}
  \item If $\Hm^{m}(X,\Q)=0$ then for almost all primes $\ell$, $\Hm^{m}(X,\Z/\ell)=0$.
  \item If $\Hm^{m}(X,\Q)\neq 0$ then for all primes $\ell$, $\Hm^{m}(X,\Z/\ell)\neq 0$.
  \end{itemize}
Now, it is of course not true that the motivic cohomology groups of Tate motives are finitely generated in general. (For example, \cref{milnor-k-theory} shows that $\Hmm^{1,1}(\F,\Z)=\F^{\times}$.) However, we would like to establish that they still exhibit a similar behavior. We can prove the first part of the dichotomy here, as a consequence of the preceding results in this section. The second part of the dichotomy will not be stated here, but is also true (for $\ell\in\F^{\times}$). It will be seen to follow from the conservativity of the $\ell$-adic realization for Tate motives (\cf the proof of \cref{spec-dtmet}).

This observation will have important ramifications for the topology of the tt-spectrum of Tate motives.
\end{rmk}

\begin{pro}\label{uct}
  Let $M\in\dtm(\F,R)$ be a Tate motive and assume $\Hmm^{\bullet,\bullet}(M,\Q)=0$. Then for almost all primes $\ell$ we have $\Hmm^{\bullet,\bullet}(M,R/\ell)=0$.
\end{pro}
\begin{proof}
  Assume that $M\in\dtm(\F,R)$ has trivial rational motivic cohomology, \ie $M$ is $n$-torsion for some positive integer $n$, by \cref{torsion}. Let $\ell$ be any prime not dividing $n$. Consider the object $\bc M\in\dtm(\F,R/\ell)$, where $\bc$ is the change of coefficients functor associated to $R\to R/\ell$. Multiplication by $n$  on $\bc M$ is at the same time zero (since it is so on $M$), and an isomorphism (since the category $\dtm(\F,R/\ell)$ is $R/\ell$-linear). We conclude that $\bc M=0$ as claimed, cf.~\cref{mod-ell-cohomology}.
\end{proof}

\begin{rmk}
  We have phrased most of the results in this section so far for Tate
  motives but it should be remarked that they equally hold for étale
  Tate motives with the same proofs, taking into account the following observations:
  \begin{itemize}
  \item Rational étale motivic cohomology is the same thing as rational motivic cohomology as discussed above (\cref{rational-equivalence}).
  \item By Rigidity (\cref{rigidity}), mod-$\ell$ étale motivic cohomology is the same thing as mod-$\ell$ étale cohomology.
  \item One sometimes has to modify the statements and arguments to take into account that the category of étale motives is $\Z[1/p]$-linear, where $p$ is the exponential characteristic of $\F$.
  \end{itemize}

Moreover, although we are mainly interested in Tate motives in this article, it should be said that some of the arguments in this section apply to motives in general. In particular, we have just shown that an implication similar to the one in \cref{uct} holds in $\dm(\F,R)$ or $\dmet(\F,R)$.
\end{rmk}

\begin{pro}
  Let $M\in\dm(\F,R)$ (or $\dmet(\F,R)$) be a motive, and assume $M\otimes\Q=0$. Then for almost all primes $\ell$ we have $M\otimes R/\ell=0$.
\end{pro}
\section{tt-geometry of étale (Tate) motives}
\label{sec:etale}
The tt-geometry of étale Tate motives is easier to describe than the one of Tate motives, mostly due to our good understanding of étale motives with finite coefficients (the Rigidity Theorem, see \cref{rigidity}). In fact, with finite coefficients it is not more difficult to describe the tt-geometry of étale motives (not necessarily Tate) at the same time, which is what we are going to start with. The integral versions considered subsequently require the (rational) motivic cohomology of the base field to satisfy certain vanishing conditions, and are proved for Tate motives only.

Fix a field $\F$ and a prime number $\ell$ invertible in $\F$. We want to recall the Bloch-Kato conjecture (now a theorem). The short exact sequence of étale sheaves
\begin{equation*}
  0\to\mu_{\ell}\to\mathcal{O}^{\times}\xrightarrow{\cdot\ell}\mathcal{O}^{\times}\to 0
\end{equation*}
induces a canonical map $\F^{\times}\to \Hmet^{1}(\F,\mu_{\ell})$ in étale cohomology. Using the cup product this extends to a morphism of graded rings $K^{\scriptscriptstyle\mathrm{M}}_{\bullet}(\F)\to\Hmet^{\bullet}(\F,\mu_{\ell}^{\otimes\bullet})$ which clearly annihilates $\ell$. Voevodsky, Rost, and others, show that the induced map is an isomorphism (as Bloch and Kato conjectured):
  \begin{equation*}
    K^{\scriptscriptstyle\mathrm{M}}_{\bullet}(\F)/\ell\xrightarrow{\sim}\Hmet^{\bullet}(\F,\mu_{\ell}^{\otimes\bullet})=\Hm^{\bullet}(G_{\F},\mu_{\ell}(\overline{\F})^{\otimes\bullet}).
  \end{equation*}

In order to apply this result we need to know the homogeneous spectrum of the graded rings involved.
\begin{lem}\label{spec-milnor}
  Let $\F$ be a field, and $\ell$ a prime. If $\ell=2$ we assume that $-1$ is a sum of squares in $\F$. Then the graded ring $K^{\scriptscriptstyle\mathrm{M}}_{\bullet}(\F)/\ell$ has a unique homogeneous prime ideal (namely, $K^{\scriptscriptstyle\mathrm{M}}_{>0}(\F)/\ell$).
\end{lem}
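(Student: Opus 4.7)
The plan is to show that the irrelevant ideal $\mathfrak{m}:=K^{\scriptscriptstyle\mathrm{M}}_{>0}(\F)/\ell$ is simultaneously a prime and the homogeneous nilradical of $K^{\scriptscriptstyle\mathrm{M}}_\bullet(\F)/\ell$. Since $K^{\scriptscriptstyle\mathrm{M}}_0(\F)/\ell=\Z/\ell$ is a field, the quotient by $\mathfrak{m}$ is the domain $\Z/\ell$, so $\mathfrak{m}$ is prime. The task then reduces to showing that every positive-degree homogeneous element is nilpotent, so that $\mathfrak{m}$ is contained in every homogeneous prime.

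Since $K^{\scriptscriptstyle\mathrm{M}}_\bullet(\F)$ is generated in degree one, every element of $K^{\scriptscriptstyle\mathrm{M}}_n(\F)/\ell$ with $n\geq 1$ is a sum of symbols $\{a_1,\ldots,a_n\}=\{a_1\}\cdots\{a_n\}$. In a graded-commutative ring, products of nilpotent homogeneous elements are nilpotent via $(xy)^k=\pm x^k y^k$, and sums of homogeneous nilpotents in the same degree are nilpotent as well: for $\ell=2$ the ring is commutative (all signs disappear), while for $\ell$ odd the even-degree part is commutative and $(x+y)^2=x^2+y^2$ for $x,y$ of the same odd degree (since $xy+yx=0$). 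Hence it suffices to show that $\{a\}$ is nilpotent in $K^{\scriptscriptstyle\mathrm{M}}_\bullet(\F)/\ell$ for every $a\in\F^\times$.

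From the standard identity $\{a,-a\}=0$ and $\Z$-bilinearity one gets $\{a,a\}=-\{a,-1\}$ in $K^{\scriptscriptstyle\mathrm{M}}_2(\F)$; combined with graded-commutativity, an easy induction yields $\{a\}^{n+1}=\pm\{a\}\{-1\}^n$. The nilpotence of every $\{a\}$ therefore reduces to the nilpotence of $\{-1\}$. For $\ell$ odd this is automatic: $2\{-1\}=\{(-1)^2\}=\{1\}=0$ in $K^{\scriptscriptstyle\mathrm{M}}_1(\F)$, so $\{-1\}=0$ modulo $\ell$.

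The main obstacle is the case $\ell=2$, where the hypothesis on $\F$ enters. When $-1=c_1^2+c_2^2$ is a sum of two squares, the Steinberg relation $\{-c_1^2,1+c_1^2\}=0$ becomes $\{-c_1^2,-c_2^2\}=0$; expanding $\Z$-bilinearly and using that $\{c_i^2\}=2\{c_i\}\equiv 0\pmod{2}$ collapses this to $\{-1\}^2=0$. For the general case $-1=c_1^2+\cdots+c_n^2$, one either iterates this two-squares argument (reducing $n$ by using that $-1-c_n^2$ is itself a sum of fewer squares, then doing a bookkeeping argument in $K^{\scriptscriptstyle\mathrm{M}}_\bullet(\F)/2$) or invokes Pfister's theorem: the level $s(\F)$ of a non-formally-real field is a power of $2$, and the corresponding Pfister form $\langle 1,1\rangle^{\otimes k}$ is hyperbolic for $k$ large, which by the Milnor--Voevodsky isomorphism $K^{\scriptscriptstyle\mathrm{M}}_\bullet(\F)/2\cong I^\bullet/I^{\bullet+1}$ translates into $\{-1\}^k=0$. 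Either route establishes nilpotence of $\{-1\}$ and completes the proof.
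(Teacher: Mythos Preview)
Your approach is essentially the same as the paper's: both show that the irrelevant ideal is the unique homogeneous prime by proving every positive-degree element is nilpotent modulo~$\ell$, using $\{a\}^2=\pm\{a\}\{-1\}$ together with the vanishing (odd~$\ell$) or nilpotence ($\ell=2$) of~$\{-1\}$. The only real difference is in the handling of $\ell=2$: the paper simply cites Milnor's result that $-1$ being a sum of squares is equivalent to every positive-degree element of $K^{\scriptscriptstyle\mathrm{M}}_\bullet(\F)$ being nilpotent, whereas you reprove the two-squares case and sketch the general one. Your ``iteration'' can be made precise by the elementary induction showing that if $a\neq 0$ is a sum of $n$ squares then $\{-1\}^{n-1}\{a\}\equiv 0\pmod 2$ (your two-squares computation is exactly the case $n=2$); invoking the Milnor--Voevodsky isomorphism also works but is far heavier than needed here.
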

\begin{proof}
  We distinguish two cases:
  \begin{itemize}
  \item[$\ell=2$:] We are assuming that $-1$ is a sum of squares in $\F$. This is equivalent, by~\cite[1.4]{milnor:k-theory-quadratic}, to every element in $K^{\scriptscriptstyle\mathrm{M}}_{\bullet}(\F)$ of positive degree being nilpotent. In particular, the only homogeneous prime ideal containing $2$ is $\langle K_{1}^{\scriptscriptstyle\mathrm{M}}(\F),2\rangle$.
  \item[$\ell\neq 2$:] For the reader's convenience we reproduce the argument in~\cite[3.9]{thornton:spech-milnor-witt}. Let $\mathfrak{p}$ be a homogeneous prime in $K^{\scriptscriptstyle\mathrm{M}}_{\bullet}(\F)$ which does not contain $2$. Let us write $[a]$ for the symbol in $K_{1}^{\scriptscriptstyle\mathrm{M}}(\F)$ associated to $a\in\F^{\times}$. Since $2[-1]=[(-1)^{2}]=[1]=0$ we have $[-1]\in\mathfrak{p}$. But then for every $a\in \F^{\times}$, $[a]^{2}=[a][-1]\in\mathfrak{p}$~\cite[1.2]{milnor:k-theory-quadratic}, and so, again, we find $K_{1}^{\scriptscriptstyle\mathrm{M}}(\F)\subset\mathfrak{p}$.
  \end{itemize}
\end{proof}

\begin{thm} \label{spec-dmet-ell}
  Let $\F$ be a field, and let $\ell$ be a prime invertible in $\F$. Assume that $\F$ contains a primitive $\ell$th root of unity (respectively, primitive $4$th root if $\ell=2$). Then both canonical morphisms in the following composition are isomorphisms of locally ringed spaces:
  \begin{equation*}
    \spec(\dmet(\F,\Z/\ell))\to\spec(\dtmet(\F,\Z/\ell))\xrightarrow{\rho}\spec(\Z/\ell).
  \end{equation*}
  
\end{thm}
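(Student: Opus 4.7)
The target $\spec(\Z/\ell)$ is a one-point locally ringed space, so by \cite[6.11]{balmer:sss} (quoted in the Conventions) the theorem reduces to a topological statement: it suffices to show $\spc(\dmet(\F,\Z/\ell))$ is a single point, because then both maps in the composition are maps between singletons, and the structure sheaves match since $\End(\one)=\Z/\ell$ throughout.

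The plan is to pass through the graded central ring. Choose the $\otimes$-invertible object $u=\Z/\ell(1)[1]$ and form, for either $\T=\dmet(\F,\Z/\ell)$ or $\T=\dtmet(\F,\Z/\ell)$,
\begin{equation*}
R^{\bullet}_{\T}=\bigoplus_{n\in\Z}\hom_{\T}(\one,u^{\otimes n})=\bigoplus_{n\in\Z}\Hmet^{n,n}(\F,\Z/\ell).
\end{equation*}
By the Bloch-Kato (Voevodsky-Rost) theorem recalled just before the statement, the non-negative part is identified with $K^{\scriptscriptstyle\mathrm{M}}_{\bullet}(\F)/\ell$; the chosen primitive $\ell$-th root of unity (resp.\ $4$-th root) trivialises the Bott class and makes $R^{\bullet}_{\T}$ a Laurent extension of Milnor K-theory mod $\ell$. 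By \cref{spec-milnor}, under our hypotheses, $\spech(R^{\bullet}_{\T})$ is a single point.

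Now I would feed this into Balmer's comparison map $\rho^{\bullet}\colon\spc(\T)\to\spech(R^{\bullet}_{\T})$. For $\T=\dtmet(\F,\Z/\ell)$, which is $\otimes$-generated by $\one$ and $u$, the classical result from \cite{balmer:sss} gives that $\rho^{\bullet}$ is a homeomorphism, so $\spc(\dtmet(\F,\Z/\ell))$ is a point and the second map in the composition is an isomorphism. For $\T=\dmet(\F,\Z/\ell)$ the classical criterion no longer applies directly since $\dmet$ has further generators $\Z/\ell^{\et}(X)(n)$, so I would invoke the sharper surjectivity theorem of \cite{balmer:surjectivity} (already used elsewhere in the paper) to ensure $\rho^{\bullet}$ is surjective onto the singleton $\spech(R^{\bullet}_{\dmet})$. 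Injectivity—hence the singleton conclusion for $\dmet$—is then forced by naturality along $\dtmet\hookrightarrow\dmet$: any tt-prime $\mathfrak{P}\subset\dmet(\F,\Z/\ell)$ restricts to a prime of $\dtmet(\F,\Z/\ell)$, which must be the unique such prime, and under Rigidity combined with Bloch-Kato this restriction is injective.

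The main obstacle is this last step: upgrading the singleton result from $\dtmet$ to all of $\dmet$. The content is that no new primes arise from the extra generators $\Z/\ell^{\et}(X)(n)$; concretely one must argue that Bloch-Kato's identification of the graded central ring already sees every tt-ideal. Once this is granted, everything collapses: both maps in the composition become homeomorphisms between singletons with matching central ring $\Z/\ell$, and \cite[6.11]{balmer:sss} promotes them to isomorphisms of locally ringed spaces.
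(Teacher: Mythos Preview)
Your proposal has a real gap, and you correctly locate it. The inference runs in the wrong direction: full faithfulness of the inclusion $\dtmet(\F,\Z/\ell)\hookrightarrow\dmet(\F,\Z/\ell)$ gives that $\spc(\dmet)\to\spc(\dtmet)$ is \emph{surjective} \cite[1.8]{balmer:surjectivity}, not injective. So even granting that $\spc(\dtmet)$ is a singleton, nothing prevents $\spc(\dmet)$ from having many primes, all mapping to the same point. Your phrase ``under Rigidity combined with Bloch-Kato this restriction is injective'' is a placeholder, not an argument. A second, smaller issue: the claim that $\rho^{\bullet}$ is a homeomorphism for $\dtmet$ ``because it is $\otimes$-generated by $\one$ and $u$'' is not a result in \cite{balmer:sss}; generation by invertibles does not by itself make $\rho^{\bullet}$ injective.

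The paper's proof runs the logic the other way. It embeds $\dmet(\F,\Z/\ell)$ into a \emph{larger} category, namely $\D{G_{\F},\Z/\ell}^{b}_{c}$, via Rigidity (\cref{rigidity}); full faithfulness again gives a surjection on spectra, so it suffices to show $\spc(\D{G_{\F},\Z/\ell}^{b}_{c})$ is a singleton. For this one passes further to $\D{\mdfin(G_{\F},\Z/\ell)}^{b}$ (\cref{finite-to-constructible-spectra}) and invokes \cref{spec-profinite}, which writes $G_{\F}$ as a cofiltered limit of finite quotients and thereby reduces to the finite-group case, where $\rho^{\bullet}$ onto $\spch(\Hm^{\bullet}(G,\Z/\ell))$ is known to be an isomorphism by \cite[8.5]{balmer:sss}. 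Only at this point do Bloch-Kato and \cref{spec-milnor} enter, to show the cohomology-ring spectrum is a point. The singleton conclusion for $\dtmet$ then follows \emph{a posteriori}, by surjectivity of $\spc(\dmet)\to\spc(\dtmet)$ coming from full faithfulness of the inclusion---the reverse of what you were attempting.
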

\begin{proof} The proof will proceed in several steps.
  \begin{enumerate}
  \item Note that it suffices to prove that $\spec(\dmet(\F,\Z/\ell))$ has at most one point. Since $\dmet(\F,\Z/\ell)$ is not the trivial category, its spectrum then has exactly one point, and therefore the composition $\rho:\spec(\dmet(\F,\Z/\ell))\to\spec(\Z/\ell)$ is a homeomorphism. Since the inclusion $\dtmet(\F,\Z/\ell)\to\dmet(\F,\Z/\ell)$ is (fully) faithful, the induced morphism on tt-spectra is surjective~\cite[1.8]{balmer:surjectivity}. It follows that if the composition is a homeomorphism then so are both maps in the statement. But $\rho$ being a homeomorphism already implies that it is an isomorphism of locally ringed spaces.
  \item By Rigidity, $\dmet(\F,\Z/\ell)$ embeds fully faithfully into $\D{G_{\F},\Z/\ell}_{c}^{b}$ thus again a surjective map on tt-spectra
    \begin{equation*}
      \spc(\D{G_{\F},\Z/\ell}_{c}^{b})\to \spc(\dmet(\F,\Z/\ell)),
    \end{equation*}
    and we reduce to prove that the former is a singleton set.
  \item By the combination of \cref{finite-to-constructible-spectra} and \cref{spec-profinite} below, the morphism
    \begin{equation*}
      \rho^{\bullet}:\spc(\D{G_{\F},\Z/\ell}^{b}_{c})\to\spch(\Hm^{\bullet}(G_{\F},\Z/\ell))
    \end{equation*}
    is an injection. Since we are assuming that $\Z/\ell\cong\mu_{\ell}$, and by Bloch-Kato, the target of this map is the homogeneous spectrum of Milnor $K$-theory $K^{\scriptscriptstyle\mathrm{M}}_{\bullet}(\F)/\ell$. But we found in \cref{spec-milnor} that the latter has indeed just a single point.
  \end{enumerate}
\end{proof}

To complete the proof we need to compare the tt-spectrum of $\D{G,k}_{c}^{b}$, for a (discrete) field $k$ and a profinite group $G$, to the homogeneous spectrum of the cohomology ring of $G$. This will be done in two steps. Consider the category $\mdfin(G,k)$ of \emph{finite dimensional} discrete $G$-modules over $k$, and its bounded derived category $\D{\mdfin(G,k)}^{b}$. There is a canonical tt-functor
\begin{equation}\label{finite-to-constructible-functor}
  \iota:\D{\mdfin(G,k)}^{b}\to\D{G,k}_{c}^{b}.
\end{equation}

\begin{lem}\label{finite-to-constructible-spectra}
  The functor $\iota$ of \cref{finite-to-constructible-functor} is an equivalence and therefore induces a homeomorphism of tt-spectra:
  \begin{equation*}
    \spc(\iota):\spc(\D{G,k}_{c}^{b})\xrightarrow{\sim}\spc(\D{\mdfin(G,k)}^{b}).
  \end{equation*}
\end{lem}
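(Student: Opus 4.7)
The plan is to prove the stronger statement that $\iota$ is actually a tt-equivalence, from which a homeomorphism of tt-spectra (and in particular an injection) follows. Two features of the inclusion $\mdfin(G,k)\hookrightarrow\Md(G,k)$ drive everything: it is a Serre subcategory (closed under subobjects and quotients trivially, and under extensions because dimension is additive in short exact sequences), and every discrete $G$-module is the filtered union of its finite-dimensional submodules.

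First I would show that $\iota$ is fully faithful, which amounts to showing that Yoneda-Ext groups between two finite-dimensional discrete $G$-modules agree whether computed in $\mdfin(G,k)$ or in $\Md(G,k)$. Given any $n$-fold Yoneda extension in $\Md(G,k)$ with finite-dimensional endpoints, one refines it to an extension with finite-dimensional middle terms by working from the right: choose a finite-dimensional subobject of $B_n$ surjecting onto $C$, then a finite-dimensional subobject of $B_{n-1}$ surjecting onto the new kernel, and so on, using at each stage the filtered colimit description of the middle term. This standard dévissage gives the agreement of Ext-groups, and translating to bounded derived categories yields fully faithful $\iota$.

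Next I would show $\iota$ is essentially surjective by induction on the length $b-a$ of the cohomological support $[a,b]$ of $M \in \D{G,k}_c^b$. The base case $b=a$ is immediate since $M \simeq H^a(M)[-a]$ lies in the heart $\mdfin(G,k)$, which is clearly in the image of $\iota$. For the inductive step, the truncation triangle $\tau^{<b}M\to M\to H^b(M)[-b]\to(\tau^{<b}M)[1]$ presents $M$ as an extension whose two outer terms admit preimages under $\iota$ (one by induction, the other from the heart); fullness of $\iota$ lifts the classifying morphism to $\D{\mdfin(G,k)}^b$, and the cone of the lift is a preimage of $M$ up to non-canonical isomorphism. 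Combining these steps, $\iota$ is a tt-equivalence, so $\spc(\iota)$ is a homeomorphism and in particular the claimed injection. The main obstacle is the first step: verifying the agreement of Yoneda-Ext requires genuinely exploiting the locally finite nature of $\Md(G,k)$ relative to $\mdfin(G,k)$, and is the only place with non-formal content; essential surjectivity is then essentially free given fullness.
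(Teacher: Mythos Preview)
Your argument is correct and in fact establishes the stronger statement that $\iota$ is an equivalence of tt-categories, hence induces a homeomorphism on spectra. The paper takes a much shorter route: it only observes (citing Deligne in SGA~4$\frac12$) that every object of $\D{G,k}_{c}^{b}$ is represented by a bounded complex of finite-dimensional discrete $G$-modules, so $\iota$ is essentially surjective, and then invokes the general tt-geometric fact that an essentially surjective tt-functor induces an injection on spectra. Thus the paper bypasses full faithfulness entirely, while you prove it first and then bootstrap essential surjectivity from it via truncation triangles. Your approach yields more (a homeomorphism rather than a mere injection), but at the cost of the Ext-comparison argument; the paper's approach is lighter because the injection already suffices for the application in the proof of \cref{spec-dmet-ell}. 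One small point: your d\'evissage for full faithfulness explicitly handles only surjectivity of the comparison map on Yoneda-Ext; injectivity requires a parallel refinement of the zigzag witnessing equivalence of two extensions, which is of the same flavour and indeed standard (this is the well-known fact that for a locally noetherian Grothendieck category the bounded derived category of noetherian objects embeds fully faithfully), but you should be aware that it is a separate step.
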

\begin{proof}
  Let $M$ be a finite-dimensional discrete $G$-module over $k$, let $N$ be an arbitrary one, and let $f:N\to M$ be an epimorphism. By the definition of discrete modules there exists a finite quotient $G/H$ through which $G$ acts on $N$. Choose representatives $g_{1},\ldots{},g_{r}\in G$ for this quotient, and choose lifts $n_{1},\ldots{},n_{s}\in N$ of a $k$-basis of $M$. It is then clear that the $k$-linear hull of
  \begin{equation*}
    \{g_{i}n_{j}\mid 1\leq i\leq r, 1\leq j\leq s\}\subset N
  \end{equation*}
  is a $G$-submodule of $N$ which is in addition finite-dimensional and still surjects onto $M$. This is enough to deduce that the functor $\D{\mdfin(G,k)}^{b}\to \D{G,k}^{b}$ is fully faithful \cite[12.1]{keller:derived-categories}. The image of this functor consists of those complexes with finite-dimensional cohomology. Indeed, the latter subcategory is generated, as a triangulated subcategory, by complexes with finite-dimensional cohomology concentrated in a single degree; and these are clearly in the image of the functor.
\end{proof}

Let us now consider the graded central ring $\R^{\bullet}_{G}$ in $\D{\mdfin(G,k)}^{b}$ with respect to $k[1]$. The following statement generalizes the analogous
 result for finite groups which was proved in~\cite[8.5]{balmer:sss}, completing work of many others. Our proof will consist in reducing to the finite case and is therefore not independent.
\begin{pro}\label{spec-profinite} Let $G$ be a profinite group, and $k$ a field. Then:
  \begin{enumerate}
  \item $\R^{\bullet}_{G}$ is canonically isomorphic to $\Hm^{\bullet}(G,k)$.
  \item The comparison morphism
  \begin{equation*}
    \rho^{\bullet}:\spec(\D{\mdfin(G,k)}^{b})\to\spech(\Hm^{\bullet}(G,k))
  \end{equation*}
  is an isomorphism of locally ringed spaces.
  \end{enumerate}
\end{pro}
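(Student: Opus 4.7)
The plan is to reduce to the case of finite groups, where Balmer's theorem~\cite[8.5]{balmer:sss} gives the conclusion, by exploiting the presentation $G=\varprojlim_{N}G/N$ as $N$ runs over the open normal subgroups. Part~(1) follows almost tautologically from the finite-quotient factorization of bounded complexes in $\mdfin(G,k)$: any bounded complex of finite-dimensional discrete $G$-modules factors through some finite quotient $G/N$, hence
\begin{equation*}
\R^{\bullet}_{G}=\hom_{\D{\mdfin(G,k)}^{b}}(k,k[\bullet])=\varinjlim_{N}\hom_{\D{\mdfin(G/N,k)}^{b}}(k,k[\bullet])=\varinjlim_{N}\Hm^{\bullet}(G/N,k)=\Hm^{\bullet}(G,k),
\end{equation*}
where the last equality is the standard definition of continuous cohomology of the profinite group~$G$, and the penultimate one is the classical identification for finite groups.

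For part~(2), the first step is to realise $\D{\mdfin(G,k)}^{b}$ as the filtered $2$-colimit of the essentially small tt-categories $\D{\mdfin(G/N,k)}^{b}$, the transition tt-functors being inflation along $G/N'\twoheadrightarrow G/N$ for $N'\subset N$. Essential surjectivity is the finite-quotient factorization; the agreement of Hom sets is precisely the filtered-Ext computation used in part~(1). Next, one verifies directly from the definitions that $\spc$ carries such a filtered $2$-colimit of essentially small tt-categories to the corresponding cofiltered limit of spectral spaces: the inverse map sends a compatible family of primes $(\mathfrak{P}_{N})_{N}$ to $\{X\in\D{\mdfin(G,k)}^{b}\mid X\simeq\iota_{N}(X_{N})\text{ with }X_{N}\in\mathfrak{P}_{N}\text{ for some }N\}$, which one checks is a prime tt-ideal.

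By continuity of $\spech$ along filtered colimits of graded rings, part~(1) identifies $\spech(\Hm^{\bullet}(G,k))$ with $\varprojlim_{N}\spech(\Hm^{\bullet}(G/N,k))$. Combining these identifications with Balmer's theorem applied to each finite quotient $G/N$, the comparison morphism $\rho^{\bullet}$ becomes the cofiltered limit of isomorphisms of locally ringed spaces $\spec(\D{\mdfin(G/N,k)}^{b})\xrightarrow{\sim}\spech(\Hm^{\bullet}(G/N,k))$, and is therefore itself a homeomorphism of the underlying spectral spaces. By the criterion~\cite[6.11]{balmer:sss} recalled in~\cref{sec:conventions}, it is then automatically an isomorphism of locally ringed spaces.

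The main obstacle is the tt-geometric bookkeeping in the second step: the continuity of $\spc$ along filtered $2$-colimits of tt-categories, together with the $2$-colimit description of $\D{\mdfin(G,k)}^{b}$. Some care is needed here because the inflation functors $\D{\mdfin(G/N,k)}^{b}\to\D{\mdfin(G/N',k)}^{b}$ are in general \emph{not} fully faithful---the Ext groups depend on which finite quotient one works over and only stabilise after passing to the colimit over~$N$---so the colimit must be interpreted in the $2$-categorical (rather than strict) sense.
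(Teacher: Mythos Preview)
Your proposal is correct and follows essentially the same approach as the paper: both reduce to the finite-group case via the filtered colimit $\D{\mdfin(G,k)}^{b}=\varinjlim_{N}\D{\mdfin(G/N,k)}^{b}$, invoke continuity of $\spc$ and of $\spch$ under filtered colimits, apply Balmer's result~\cite[8.5]{balmer:sss} at each finite level, and upgrade the resulting homeomorphism via~\cite[6.11]{balmer:sss}. The only difference is that the paper cites~\cite[8.2]{gallauer:tt-fmod} for the continuity of $\spc$, whereas you sketch a direct verification and explicitly flag the subtlety that the inflation functors are not fully faithful (so the colimit is genuinely $2$-categorical)---a point the paper glosses over.
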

\begin{proof}
The group $G$ is the inverse limit $\varprojlim_{i}G_{i}$ of a cofiltered diagram $I\ni i\mapsto G_{i}$ of finite groups (with the discrete topology). For every transition map $G_{i}\to G_{j}$ we obtain a functor $\mdfin(G_{j},k)\to\mdfin(G_{i},k)$ by restricting the action. Notice that a finite dimensional $G$-module is discrete (\ie the action of $G$ is continuous) if and only if the action factors through $G_{i}$, for some $i$. It follows easily that $\varinjlim_{i}\mdfin(G_{i},k)=\mdfin(G,k)$ and this equivalence passes first to the level of bounded cochain complexes, and then to the bounded derived category so that we have $\varinjlim_{i}\D{\mdfin(G_{i},k)}^{b}=\D{\mdfin(G,k)}^{b}$.

We now obtain the first part of the statement since $\R_{G_{i}}^{\bullet}=\Hm^{\bullet}(G_{i},k)$ and therefore
\begin{equation*}
  \R_{G}^{\bullet}=\varinjlim\R_{G_{i}}^{\bullet}=\varinjlim\Hm^{\bullet}(G_{i},k)=\Hm^{\bullet}(G,k).
\end{equation*}

For the second statement, consider the following square which is commutative by naturality of $\rho^{\bullet}$ [\citealp[5.6]{balmer:sss}; recalled in \cref{sec:tt-sheaf}].
\begin{equation*}
  \xymatrix{\spc(\D{\mdfin(G,k)}^{b})\ar[r]\ar[d]_{\rho^{\bullet}}&\varprojlim_{i}\spc(\D{\mdfin(G_{i},k)}^{b})\ar[d]^{\varprojlim_{i}\rho^{\bullet}}\\
    \spch(\Hm^{\bullet}(G,k))\ar[r]&\varprojlim_{i}\spch(\Hm^{\bullet}(G_{i},k))}
\end{equation*}
By~\cite[8.2]{gallauer:tt-fmod}, the top horizontal arrow is a homeomorphism. By~\cite[8.5]{balmer:sss}, so is the right vertical map. And, again by $\varinjlim\Hm^{\bullet}(G_{i},k)=\Hm^{\bullet}(G,k)$, the bottom horizontal map is a homeomorphism. Necessarily then, the left vertical map is a homeomorphism as well. We now conclude since $\rho^{\bullet}$ is then automatically an isomorphism of locally ringed spaces.
\end{proof}

Having described the tt-geometry for finite coefficients, we next consider rational coefficients. Here, we restrict to Tate motives in order to invoke the results of~\cite{peter:spectrum-damt}. Recall also (\cref{rational-equivalence}) that we have an equivalence of tt-categories $\dtm(\F,\Q)\simeq\dtmet(\F,\Q)$ and the discussion therefore applies to both topologies; we will phrase them for $\dtm(\F,\Q)$.

Consider $\Hmm^{m,n}(\spec(\F),\Q)$, the rational motivic cohomology ring of the field (see \cref{motivic-cohomology}). There is the following relation between Bloch's higher Chow groups and algebraic $K$-theory (cf.~\cref{higher-chow-groups}):
\begin{equation*}
    \Hmm^{m,n}(\spec(\F),\Q)=\mathrm{CH}^{n}(\F,2n-m;\Q)=(K_{2n-m}(\F)\otimes\Q)^{(n)},
\end{equation*}
where the latter denotes the weight $n$ eigenspace of the Adams operations $(\psi^{k})_{k}$~\cite{bloch:moving-lemma,levine:higher-chow-groups}. It might be helpful to visualize this bigraded ring as in \cref{fig:motcohpoint}, regarding which we offer a few comments:
\begin{figure}[t]
 
\begin{center}
  \begin{tikzpicture}[scale=0.7]
    \begin{axis}[axis lines=middle,xmin=-5,xmax=10, xtick={2}, ytick={2},
      ymin=-2,ymax=10,xlabel={$m$},x label style={at={(1.08,0.13)}},ylabel={$n$}, y label style={at={(0.3,1.08)}},
      after end axis/.code={%
        \begin{scope}[on background layer]
          \node [above right] at (axis cs:  10,  10) {$K^{\mathrm{M}}_{\bullet}$};
          \foreach \x in {-5,...,10}{ \foreach \y in {-2,...,10}{
              \fill[black!80] (axis cs:\x,\y) circle[radius=0.5pt];
            }}
        \end{scope} }]

      \addplot[mark=none, black, ultra thick] coordinates{(0,0) (10,10)};
      \addplot[fill=black, fill opacity=0.4, draw opacity=0] coordinates { (0.5,-0.5) (0.5,0) (10.5,10) (10.5,-0.5)} ;
      \addplot[fill=black, fill opacity=0.4, draw opacity=0] coordinates {(-5.5, -0.5) (-5.5,-2) (10.5,-2) (10.5,-0.5) };
      \addplot[fill=black, fill opacity=0.4, draw opacity=0] coordinates { (-5.5, -0.5) (-5.5,0.5) (-0.5,0.5) (-0.5,-0.5) };
      \addplot[fill=black, fill opacity=0.1, draw opacity=0] coordinates {(-5.5, 0.5) (-5.5,11) (0.5,11) (0.5,1) (0,0.5)};
      \addplot[fill=black, fill opacity=0.1, draw opacity=0] coordinates {(1.5, 1) (1.5,10) (10,10) (10,9.5)};
    \end{axis}
  \end{tikzpicture}
\end{center}

  \caption{Vanishing in $\Hmm^{m,n}(\spec(\F),\Q)$}
  \label{fig:motcohpoint}
\end{figure}
\begin{itemize}
\item The Milnor $K$-theory is displayed on the diagonal for visual aid.
\item The darker area always vanishes: for $n<0$ or $n<m$ this is for dimension reasons; for $n=0$ and $m<0$ this follows from a simple computation with higher Chow groups.
\item The lightly shaded area is what we are now interested in: the \emph{Beilinson-Soulé vanishing conjecture} predicts that the left patch vanishes. It is known in a few cases, for example if $\F$ is any of the following: a finite field, a global field (in any characteristic), a function field of a genus 0 curve over a number field. We now introduce an even stronger Hypothesis.
\end{itemize}
\begin{hyp}[Vanishing Hypothesis on $\F$] \label{vanishing-hypothesis}
  The rational motivic cohomology groups $\Hmm^{m,n}(\spec(\F),\Q)$ vanish whenever
  \begin{itemize}
  \item $m\leq 0< n$, or
  \item $n\geq m\geq 2$.
  \end{itemize}
\end{hyp}

\begin{rmk}\label{vanishing-hypothesis-union}
If $\F$ is the union of subfields all of which satisfy \cref{vanishing-hypothesis} then so does $\F$. This can be seen as a very special case of the ``continuity'' of the assignment $\dm(-,\Q)$ discussed in~\cite[4.3.3]{cisinski-deglise:dm}.
\end{rmk}

\begin{rmk}
  Using \cref{vanishing-hypothesis-union}, classical computations in algebraic K-theory imply that \cref{vanishing-hypothesis} is notably satisfied in the following cases:
  \begin{enumerate}
  \item If $\F$ is an algebraic extension of $\Q$ this follows from Borel's computation of the algebraic
      $K$-theory of number fields.
    \item If $\F$ is an algebraic extension of a finite field this follows from Quillen's computation of the algebraic $K$-theory of finite fields.
    \item If $\F$ is an algebraic extension of a global field of positive characteristic this follows from Harder's Theorem \cite{harder:arithmetic-groups-function-fields}.
  \end{enumerate}
\end{rmk}

We recall the following result on Tate motives with rational coefficients.
\begin{pro}[{\cite[4.17]{peter:spectrum-damt}}] \label{spec-dtm-rational}
Let $\F$ be a field satisfying \cref{vanishing-hypothesis}. Then both morphisms in the following composition are isomorphisms of locally ringed spaces:
  \begin{equation*}
    \spec(\dtmet(\F,\Q))\xrightarrow{\spec(\aet)}\spec(\dtm(\F,\Q))\xrightarrow{\rho}\spec(\Q).
  \end{equation*}  
\end{pro}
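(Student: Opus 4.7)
The first morphism is handled immediately: \cref{rational-equivalence} gives that $\aet\colon\dtm(\F,\Q)\to\dtmet(\F,\Q)$ is an equivalence of tt-categories, so the induced map on tt-spectra is an isomorphism of locally ringed spaces.

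For the second morphism $\rho$, the plan is to reduce to showing $\spc(\dtm(\F,\Q))$ consists of a single point. The target $\spec(\Q)$ is itself a one-point locally ringed space with structure ring $\Q$, and the central ring of $\dtm(\F,\Q)$ is $\End_{\dtm(\F,\Q)}(\one)=\Hmm^{0,0}(\spec(\F),\Q)=\Q$; thus by \cite[6.11]{balmer:sss} any homeomorphism $\rho$ automatically upgrades to an isomorphism of locally ringed spaces. Since the spectrum of the nonzero category $\dtm(\F,\Q)$ is nonempty, I only need to show it has at most one point---equivalently, that every nonzero $M\in\dtm(\F,\Q)$ generates the whole category as a tt-ideal.

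My plan is to exploit \cref{vanishing-hypothesis} to equip $\dtm(\F,\Q)$ with Levine's motivic t-structure. The vanishing of $\Hmm^{m,n}(\spec(\F),\Q)$ for $m\leq 0<n$ is precisely the Beilinson--Soulé-type input needed for the construction, while the vanishing for $n\geq m\geq 2$ makes the t-structure bounded on compact objects and forces the heart $\mathcal{MT}(\F,\Q)$ to be an abelian category generated (via iterated extensions recorded by Milnor K-theory) by the simple objects $\Q(n)$, $n\in\Z$. Granted this, for any nonzero $M\in\dtm(\F,\Q)$ some cohomology $\Hm^{i}(M)\in\mathcal{MT}(\F,\Q)$ is nonzero; thickness of $\langle M\rangle$ (via the truncation triangles) forces $\Hm^{i}(M)\in\langle M\rangle$, then passing through a composition series in $\mathcal{MT}(\F,\Q)$ yields some $\Q(n)\in\langle M\rangle$, and the $\otimes$-invertibility of $\Q(n)$ gives $\one=\Q(n)\otimes\Q(n)^{\vee}\in\langle M\rangle$, hence $\langle M\rangle=\dtm(\F,\Q)$.

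The main obstacle is establishing the motivic t-structure with the requisite properties under the Vanishing Hypothesis---a substantial piece of input going back to Levine's work on mixed Tate motives. Once the t-structure and the description of its heart are available, the tt-geometric collapse is formal from the argument above, and the statement reduces to the bookkeeping involving the central ring $\Q$ and Balmer's criterion for isomorphism of locally ringed spaces.
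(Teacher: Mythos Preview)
Your approach is essentially the one the paper sketches: invoke Levine's t-structure via the Beilinson--Soul\'e part of the Vanishing Hypothesis, then use the remaining vanishing to conclude that $\spc(\dtm(\F,\Q))$ is a point. The paper outsources the second step to \cite[4.2]{peter:spectrum-damt} (identifying the tt-spectrum with the ``coherent spectrum'' of the heart), whereas you attempt a direct argument. That is fine in principle, but one step needs repair.

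The assertion ``thickness of $\langle M\rangle$ (via the truncation triangles) forces $\Hm^{i}(M)\in\langle M\rangle$'' is not justified by truncation triangles alone: from a triangle $\tau_{\le a}M\to M\to\tau_{>a}M$ and $M\in\langle M\rangle$ you cannot deduce that either truncation lies in $\langle M\rangle$. (Two-out-of-three needs two, not one.) What actually makes this step work is the \emph{second} part of \cref{vanishing-hypothesis}: for $k\ge 2$ one has
\[
\mathrm{Ext}^{k}_{\mathcal{MT}(\F,\Q)}(\Q(i),\Q(j))=\hom_{\dtm(\F,\Q)}(\Q(i),\Q(j)[k])=\Hmm^{k,j-i}(\spec(\F),\Q),
\]
which vanishes either because $k>j-i$ (unconditionally) or because $2\le k\le j-i$ (by the hypothesis). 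Hence the heart $\mathcal{MT}(\F,\Q)$ is hereditary, and in $\D{\mathcal{MT}(\F,\Q)}^{b}\simeq\dtm(\F,\Q)$ every object decomposes as $M\simeq\bigoplus_{i}\Hm^{i}(M)[-i]$. Now $\Hm^{i}(M)$ is a direct summand of a shift of $M$, so $\Hm^{i}(M)\in\langle M\rangle$, and your composition-series argument (or simply the nonvanishing of $\dim\Hm^{i}(M)\in\Q$, giving a splitting of the evaluation $\Hm^{i}(M)\otimes\Hm^{i}(M)^{\vee}\to\one$) finishes the proof.

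Relatedly, you misattribute the role of the second vanishing: boundedness of the t-structure and the description of the heart as built from the simples $\Q(n)$ already follow from the Beilinson--Soul\'e part via Levine. The additional input from $\Hmm^{m,n}(\spec(\F),\Q)=0$ for $n\ge m\ge 2$ is precisely the hereditariness used above.
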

 The Beilinson-Soulé part of \cref{vanishing-hypothesis} is used to invoke~\cite{levine92-tatemotives} and obtain a bounded t-structure on $\dtm(\F,\Q)$. The second part of \cref{vanishing-hypothesis} then implies that the tt-spectrum is identified with the ``coherent spectrum'' of the heart~\cite[4.2]{peter:spectrum-damt}, which is easily seen to be a singleton.

Now we can put the results on finite and rational coefficients together to arrive at an integral statement (for more general statements see \cref{spec-dtmet-general} and \cref{etale-artin-tate}).
\begin{thm}\label{spec-dtmet}
  Let $\F$ be a field of exponential characteristic $p$, and assume that for every $\ell\neq p$ prime, $\F$ contains a primitive $\ell$th root of unity (respectively, $4$th root of unity if $\ell=2$). Assume also that $\F$ satisfies \cref{vanishing-hypothesis}. Then:
\begin{enumerate}
  \item The central ring $\R_{\dtmet(\F,\Z)}$ is $\Z[1/p]$.
  \item The comparison morphism
    \begin{equation*}
      \rho:\spec(\dtmet(\F,\Z))\to\spec(\Z[1/p])
    \end{equation*}
    is an isomorphism of locally ringed spaces.
\end{enumerate}
\end{thm}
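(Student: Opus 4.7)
Part (1) should be brief. The category $\dtmet(\F,\Z)$ is $\Z[1/p]$-linear by the Artin--Schreier argument recalled in \cref{sec:DTMet}, so the unit map $\Z\to\R_{\dtmet(\F,\Z)}$ factors through $\Z[1/p]$. Conversely, $\R_{\dtmet(\F,\Z)}=\hom(\one,\one)=\Hmet^{0,0}(\spec(\F),\Z)$ is the zeroth étale cohomology of the constant $\Z[1/p]$-linear sheaf on the connected scheme $\spec(\F)$, which equals $\Z[1/p]$ itself.

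For part (2), by \cite[6.11]{balmer:sss} it suffices to prove that $\rho$ is a homeomorphism. The plan is to exhibit every prime of $\spec(\dtmet(\F,\Z[1/p]))$ as arising from one of the simpler categories for which the tt-spectrum has already been computed: $\dtmet(\F,\Q)$, a single point by \cref{spec-dtm-rational}, and $\dtmet(\F,\Z/\ell)$ for each prime $\ell\neq p$, a single point by \cref{spec-dmet-ell} (this is where the hypothesis on roots of unity enters). The change-of-coefficients functors $\bc_{\Q}$ and $\bc_{\Z/\ell}$ produce candidate primes $\mathfrak{P}_{(0)}:=\ker(\bc_{\Q})$ and $\mathfrak{P}_{(\ell)}:=\ker(\bc_{\Z/\ell})$, and evaluating $\rho$ on the Koszul objects $\cone(r\cdot\id_{\one})$ gives $\rho(\mathfrak{P}_{(0)})=(0)$ and $\rho(\mathfrak{P}_{(\ell)})=(\ell)$ in $\spec(\Z[1/p])$.

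The main step is showing these are all primes. We invoke Balmer's surjectivity results~\cite{balmer:surjectivity}: the family $\{\bc_{\Q}\}\cup\{\bc_{\Z/\ell}\}_{\ell\neq p}$ is jointly conservative on $\dtmet(\F,\Z[1/p])$. Indeed, if $M\otimes\Q=0$ then by the étale analogue of \cref{torsion} we have $n\cdot\id_{M}=0$ for some $n>0$; and if additionally $M\otimes\Z/\ell=0$ for every prime $\ell\mid n$, then $\ell$ acts invertibly on $M$ while also being zero in the $n$-torsion object $M$, forcing $M=0$. Note that by \cref{uct-etale} only finitely many of these functors matter for any given object, so Balmer's joint-surjectivity result applies and yields $\spc(\dtmet(\F,\Z[1/p]))=\{\mathfrak{P}_{(0)}\}\cup\{\mathfrak{P}_{(\ell)}\}_{\ell\neq p}$, so $\rho$ is bijective.

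To promote bijectivity to homeomorphism, we check that supports match the closed sets of $\spec(\Z[1/p])$. By \cref{uct-etale}, for any $M\in\dtmet(\F,\Z[1/p])$ the support $\supp(M)$ is either the entire space (if $M\otimes\Q\neq0$, in which case $M\otimes\Z/\ell\neq 0$ for all primes $\ell\neq p$) or the finite subset $\{\mathfrak{P}_{(\ell)}\mid M\otimes\Z/\ell\neq 0\}\subset\{\mathfrak{P}_{(\ell)}\}_{\ell\neq p}$ (if $M$ is torsion). Under $\rho$, these correspond exactly to the closed subsets of $\spec(\Z[1/p])$, namely the whole space and the finite sets of closed points, and the conclusion follows via~\cite[6.11]{balmer:sss}. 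The principal technical hurdle is the application of Balmer's \cite{balmer:surjectivity} to turn joint conservativity of the coefficient-change functors into a cover of $\spc(\dtmet(\F,\Z[1/p]))$; the rest is essentially bookkeeping around \cref{spec-dtm-rational}, \cref{spec-dmet-ell}, and \cref{uct-etale}.
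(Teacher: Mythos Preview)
Your proof follows essentially the same strategy as the paper's: identify the primes via the change-of-coefficients functors to $\Z/\ell$ and to $\Q$, invoke \cref{spec-dmet-ell} and \cref{spec-dtm-rational} to see each target has a single prime, and use \cref{uct-etale} to pin down the topology. Part~(1) and the topology argument are fine and match the paper closely.

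The one genuine imprecision is the step ``joint conservativity $\Rightarrow$ the images of $\spc(\bc_{?})$ cover $\spc(\dtmet(\F,\Z[1/p]))$'', which you attribute to a ``joint-surjectivity result'' in~\cite{balmer:surjectivity}. That paper treats a single tt-functor, and the implication from conservativity on objects to surjectivity on spectra is not one of its theorems; what it does give you is the image formula \cite[1.7]{balmer:surjectivity}, $\img(\spc(\bc))=\supp(\ff\one)$, which requires a right adjoint landing in compact objects. This is available for each $\bc_{\Z/\ell}$ (since $\Z/\ell$ is perfect over $\Z[1/p]$) and indeed identifies the fiber $\rho^{-1}(\langle\ell\rangle)=\supp(\Z/\ell)$ with the single point $\mathfrak{P}_{(\ell)}$. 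But $\bc_{\Q}$ has no such right adjoint on compacts, so \cite[1.7]{balmer:surjectivity} does not apply to it, and your uniform treatment of $\bc_{\Q}$ alongside the $\bc_{\Z/\ell}$ is not justified as written. The paper handles the generic fiber differently: by \cref{Q-localization} and \cite[5.6]{balmer:sss} the fiber $\rho^{-1}((0))$ is identified with $\spc(\dtmet(\F,\Q))$, which is a point by \cref{spec-dtm-rational}. Once you replace the vague appeal to joint surjectivity with this fiberwise argument (adjoint formula over the closed points, central localization over the generic point), your proof coincides with the paper's.
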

\begin{proof}
  We discussed in \cref{sec:DTMet} why $p$ is a unit in the central ring. A more precise statement is~\cite[7.1.2]{cisinski-deglise:etale-motives} from which the first part of the Theorem follows immediately.

  For the second part, it suffices to show that $\rho$ is a homeomorphism. First we show it is a bijection. For $\ell\neq p$, the fiber of $\rho$ over $\langle\ell\rangle$ is by definition $\supp(\Z/\ell)$. Since $\Z/\ell$ is the image of the unit under $\ff$, the right adjoint in the adjunction
  \begin{align*}
      \bc:\dtmet(\F,\Z)\rightleftarrows\dtmet(\F,\Z/\ell):\ff,
  \end{align*}
  we deduce from~\cite[1.7]{balmer:surjectivity} that the fiber of $\rho$ over $\langle\ell\rangle$ is precisely the image of $\spc(\bc)$. And by \cref{spec-dmet-ell} this consists of a single point.

  For the generic fiber of $\rho$ we note that the central localization at $\Z\backslash 0$ gives, up to idempotent completion, exactly $\dtmet(\F,\Q)$ (\cref{Q-localization}). By~\cite[5.6]{balmer:sss} (recalled in \cref{spec-localization-cartesian-explicit}), it suffices to show that the latter has a one point spectrum. This is \cref{spec-dtm-rational}.

  Finally, we need to show that the topologies of the two spaces are the same. We use that a bijective spectral map between spectral spaces is a homeomorphism if specializations lift along the map, which allows us to work in $\dtmet(\F,\Z_{\langle{\ell}\rangle})$ (for every $\ell$ different from $p$). In other words, we need to show the inclusion of prime ideals
  \begin{displaymath}
    \ker(-\otimes\Z/\ell)\subset \ker(-\otimes\Q)
  \end{displaymath}
  in $\dtmet(\F,\Z_{\langle{\ell}\rangle})$. But suppose $M\otimes\Z/\ell=0$ for some $M\in\dtmet(\F,\Z_{\langle{\ell}\rangle})$. This implies that the $\ell$-adic realization of $M$ is zero, as is plain from the identification of this realization with $\ell$-adic completion in~\cite[7.2.24]{cisinski-deglise:etale-motives}. By conservativity of the $\ell$-adic realization (see~\cref{spec-dtm-rational}) we deduce that $M\otimes \Q=0$ as well, and this concludes the proof.
\end{proof}

\begin{rmk}\label{spec-dtmet-general}
  Of course, if $S\subset \Z\backslash\{0\}$ is a saturated multiplicative subset containing the exponential characteristic of $\F$ and such that for each prime $\ell\notin S$, $\F$ contains a primitive $\ell$th root of unity (respectively, $4$th root of unity if $\ell=2$) then the same arguments show (still assuming $\F$ satisfies \cref{vanishing-hypothesis}):
  \begin{enumerate}
  \item  The central ring $\R_{\dtmet(\F,\Z[S^{-1}])}$ is $\Z[S^{-1}]$.
  \item The comparison morphism
    \begin{equation*}
      \rho:\spec(\dtmet(\F,\Z[S^{-1}]))\to\spec(\Z[S^{-1}])
    \end{equation*}
    is an isomorphism of locally ringed spaces.
  \end{enumerate}
\end{rmk}

\begin{exa}\label{cyclotomic-field}
  Let $\F=\Q(\zeta_{\ell})$ be the $\ell$th cyclotomic field, where $\ell$ is an odd prime number. Then, canonically, $\spec(\dtmet(\Q(\zeta_{\ell}),\Z_{\langle\ell\rangle}))=\spec(\Z_{\langle\ell\rangle})$. It follows \cite[4.10]{balmer:spectrum} that the thick tensor ideals of $\dtmet(\Q(\zeta_{\ell}),\Z_{\langle\ell\rangle})$ are exactly
  \begin{align*}
    0,&&\{M\mid M\text{ is torsion}\},&&\dtmet(\Q(\zeta_{\ell}),\Z_{\langle\ell\rangle}).
  \end{align*}
  Notice in particular that étale cohomology $\dtmet(\Q(\zeta_{\ell}),\Z_{\langle\ell\rangle})\to\D{\Z/\ell}^{b}$ is conservative (as its kernel is a tt-ideal).
\end{exa}

\begin{rmk}\label{etale-artin-tate}
  A statement analogous to \cref{spec-dtmet} (or \cref{spec-dtmet-general}) holds for étale Artin-Tate motives if, in addition, \cref{vanishing-hypothesis} is satisfied for every finite extension $\F'/\F$. Indeed, \cref{spec-dmet-ell} clearly applies for finite coefficients; and for rational coefficients, \cite[4.17]{peter:spectrum-damt} gives the required result.
\end{rmk}

\section{Filtered Galois representations}
\label{sec:positselski}

In this section we recall Positselski's approach to describing Tate motives with finite coefficients in~\cite{positselski:artin-tate-motives}. The upshot is \cref{dtm-tt-equivalence} which tells us that the tt-geometry of Tate motives over certain fields and with finite coefficients is the same as the tt-geometry of Galois representations with a ``unipotent filtration'', the latter being seemingly more tractable. Although in this article we will eventually deal with algebraically closed fields only, it does not cause any difficulties to treat the general case here.

\begin{con}
  Throughout this section we fix a field $\F$ and a prime $\ell$ invertible in $\F$. The absolute Galois group of $\F$ is denoted by $G_{\F}$ as before. We also assume that $\F$ contains a primitive $\ell$th root of unity $\zeta\in\mu_{\ell}(\F)$ which we interpret as a morphism
  $\beta:\Z/\ell(0)\to\Z/\ell(1)$ in $\dtm(\F,\Z/\ell)$, in view of
  \begin{equation*}
    \hom_{\dtm(\F,\Z/\ell)}(\Z/\ell,\Z/\ell(1))=\mu_{\ell}(\F).
  \end{equation*}
  To see this identification, notice that by the change of coefficients adjunctions (\cref{coefficients-adjunction-compact}) we are supposed to identify $\hom_{\dtm(\F,\Z)}(\Z,\Z/\ell(1))$ with the $\ell$th roots of unity. This follows readily from identifying $\Z(1)$ with the complex $\mathcal{O}^{\times}[-1]$ of sheaves with transfers \cite[4.9]{MVW-motcoh}.
\end{con}
For any integer $n$, define the replete triangulated subcategories $\dtm(\F,\Z/\ell)^{\geq n}$ (respectively $\dtm(\F,\Z/\ell)^{<n}$) generated by $(\Z/\ell)(n')$ for $n'\geq n$ (respectively $n'< n$). The vanishing of motivic cohomology $\Hm^{p,q}(\F,\Z/\ell)$ when $q<0$ (cf.~our remarks regarding \cref{fig:motcohpoint}), implies that the pair
\begin{equation*}
  (\dtm(\F,\Z/\ell)^{\geq n},\dtm(\F,\Z/\ell)^{<n})
\end{equation*}
defines a t-structure on $\dtm(\F,\Z/\ell)$. Indeed, the arguments of~\cite[1.2]{levine92-tatemotives} go through word for word. In particular, this gives rise to adjoints $W^{\geq n},W^{<n}$ to the inclusions of these subcategories, and one deduces that every object $M\in\dtm(\F,\Z/\ell)$ admits a functorial descending filtration, called the \emph{weight filtration},
\begin{equation}\label{weight-filtration}
  0\to W^{\geq n_{0}}M\to W^{\geq n_{0}-1}M\to\cdots\to W^{\geq n_{1}}M\to M,
\end{equation}
such that the associated graded piece $\mathrm{gr}^{n}M:=W^{\geq n}W^{<n+1}M$ is a finite direct sum of copies of shifts of $\Z/\ell(n)$ (as $\Hmm^{p,q}(\F,\Z/\ell)$ vanishes when $q=0$ and $p\neq 0$).

Define the subcategory $\fil(\F,\Z/\ell)\subset \dtm(\F,\Z/\ell)$ as the smallest full subcategory containing $\Z/\ell(n)$ for all integers $n$, and closed under extensions. (The latter condition means that for any triangle in which the outer two terms lie in the subcategory, the middle one does as well.) The Bloch-Kato conjecture (recalled in \cref{sec:etale}) implies the Beilinson-Lichtenbaum conjecture~\cite{suslin-voevodsky:bloch-kato-beilinson-lichtenbaum,geisser-levine:bloch-kato-beilinson-lichtenbaum}, \ie the étale realization functor induces identifications:
\begin{align*}
  \Hmm^{p,q}(\F,\Z/\ell)=
  \begin{cases}
    \Hm^{p}(G_{\F},\mu_{\ell}(\overline{\F})^{\otimes q})&:p\leq q\\
    0&:p>q
  \end{cases}
\end{align*}
For one, this implies that there are no negative Ext groups between the Tate objects $\Z/\ell(n)$, and from this one deduces that $\fil(\F,\Z/\ell)$ is in fact an exact subcategory of $\dtm(\F,\Z/\ell)$ (\ie the triangles of $\dtm(\F,\Z/\ell)$ lying in $\fil(\F,\Z/\ell)$ define an exact structure, see~\cite{dyer:exact-triangulated}). And secondly, the Beilinson-Lichtenbaum conjecture essentially implies the following result.

\begin{pro}[cf.~{\cite[3.1]{positselski:artin-tate-motives}}]\label{etale-realization-filtered}
  The étale realization induces an equivalence of exact tensor categories
  \begin{equation*}
    \underline{\reet}_{\ell,\zeta}:\fil(\F,\Z/\ell)\xrightarrow{\sim}\filgalu(G_{\F},\Z/\ell),
  \end{equation*}
  where the latter denotes the category of (discrete) Galois modules over $\Z/\ell$ equipped with a \emph{unipotent filtration}, \ie a finite decreasing filtration whose graded pieces are finite direct sums of copies of the trivial Galois module $\Z/\ell$.
\end{pro}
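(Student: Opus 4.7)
The strategy is to realize an object $M$ of $\fil(\F,\Z/\ell)$ via $\reet_{\ell}$ together with its motivic weight filtration (\ref{weight-filtration}), and to verify the equivalence by dévissage, using Beilinson--Lichtenbaum to match $\hom$ and $\mathrm{Ext}^{1}$ groups between Tate generators on the two sides.

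To construct the functor, apply $\reet_{\ell}$ and use the trivialization $\zeta\colon\Z/\ell\xrightarrow{\sim}\mu_{\ell}$ to identify $\reet_{\ell}(\Z/\ell(n))$ with the trivial Galois module $\Z/\ell$, declared to sit in filtration degree $n$; call this filtered object $T_{n}$. For general $M\in\fil$, set $F^{n}\underline{\reet}_{\ell,\zeta}(M):=\reet_{\ell}(W^{\geq n}M)$. Two things have to be checked: that $\reet_{\ell}(M)$ is concentrated in cohomological degree $0$, and that the resulting filtration is unipotent. Both follow by induction on the length of the weight filtration via the long exact cohomology sequence in $\D{G_{\F},\Z/\ell}$: if $A\to M\to B$ is a triangle in $\fil$ and $\reet_{\ell}(A),\reet_{\ell}(B)$ sit in degree $0$, so does $\reet_{\ell}(M)$, and the filtrations glue to one whose graded pieces are direct sums of trivial modules.

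For full faithfulness, dévissage along the weight filtration on both arguments plus the five-lemma reduces matters to the generators. The source $\hom_{\fil}(\Z/\ell(n),\Z/\ell(m))=\Hmm^{0,m-n}(\F,\Z/\ell)$ equals $\Z/\ell$ for $m\geq n$ and vanishes for $m<n$ by Beilinson--Lichtenbaum, matching $\hom_{\filgalu}(T_{n},T_{m})$, which is visibly the same. The induction needs the analogous agreement on $\mathrm{Ext}^{1}$: $\Hmm^{1,m-n}(\F,\Z/\ell)\cong\Hm^{1}(G_{\F},\Z/\ell)$ for $m>n$, and both sides vanish for $m\leq n$ (on the Galois side because a non-split extension of trivial modules sitting in one and the same filtration degree would violate unipotency). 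Essential surjectivity then lifts an iterated-extension presentation of an object in $\filgalu$ to one in $\fil$ step by step, the obstruction at each stage landing in an $\mathrm{Ext}^{1}$ that matches. Exactness and tensor-compatibility descend from the corresponding properties of $\reet_{\ell}$, together with the observation that distinguished triangles in $\dtm$ whose three vertices lie in $\fil$ give rise, under $\reet_{\ell}$, to short exact sequences strictly compatible with the filtrations.

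The main obstacle is verifying that the weight filtration produces \emph{unshifted} Tate graded pieces for objects in $\fil$, so that the étale realization lands in cohomological degree $0$ and genuinely defines a filtered Galois module rather than a filtered complex. This is exactly where the Beilinson--Lichtenbaum vanishing $\Hmm^{p,q}(\F,\Z/\ell)=0$ for $p>q$ is indispensable: it prevents higher Ext groups between Tate twists from forcing shifts into the iterated extensions building $\fil$, and it is simultaneously what makes the $\mathrm{Ext}^{1}$ calculations on the two sides coincide.
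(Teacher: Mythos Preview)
Your d\'evissage argument for the exact equivalence is essentially Positselski's, which the paper simply cites rather than reproves; on that front you are supplying detail the paper deliberately omits. (One caveat: iterating the five-lemma reduction forces not just $\mathrm{Hom}$ and $\mathrm{Ext}^{1}$ but higher $\mathrm{Ext}$ groups into play, so matching only degrees $0$ and $1$ on generators is not literally sufficient---but this is handled in~\cite{positselski:artin-tate-motives} and is not the point of the present proof.)

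Where your proposal is genuinely thin is the tensor compatibility, which is in fact the \emph{only} thing the paper proves. You say it ``descends from the corresponding properties of $\reet_{\ell}$'', but $\underline{\reet}_{\ell,\zeta}$ is not $\reet_{\ell}$: it also records the weight filtration, and the tensor product in $\filgalu$ carries the convolution filtration $(a\otimes b)^{n}=\sum_{p+q=n}a^{p}\otimes b^{q}$. One must check that $\underline{\reet}_{\ell,\zeta}(M\otimes N)$ agrees \emph{as a filtered module} with $\underline{\reet}_{\ell,\zeta}(M)\otimes\underline{\reet}_{\ell,\zeta}(N)$, and monoidality of the underlying functor $\reet_{\ell}$ does not give this for free. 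The paper's argument first observes that $\otimes$ sends $\T^{\geq p}\times\T^{\geq q}$ into $\T^{\geq p+q}$, producing a natural transformation $W^{\geq p}(-)\otimes W^{\geq q}(-)\to W^{\geq p+q}(-\otimes-)$; together with the monoidality of $\reet_{\ell}$ this yields a \emph{lax} monoidal structure on $\underline{\reet}_{\ell,\zeta}$. Strong monoidality is then verified on the generators $\Z/\ell(n)$ and propagated by closure under extensions, using that both tensor products are exact in each variable. This step is the actual content of the paper's proof and is missing from yours.
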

A few words about the latter category. The tensor product $a\otimes b$ of two filtered Galois modules $a$ and $b$ has an induced filtration given by
\begin{equation*}
  (a\otimes b)^{n}=\Sigma_{p+q= n}a^{p}\otimes b^{q},
\end{equation*}
these tensor products being over $\Z/\ell$. And the exact structure on $\filgalu(G_{\F},\Z/\ell)$ is defined as follows: a conflation is a short sequence $a\rightarrowtail b\twoheadrightarrow c$ with zero composition such that the associated graded sequences $0\to\gr^{n}a\to\gr^{n}b\to\gr^{n}c\to 0$ are all \emph{split} short exact in $\gal(G_{\F},\Z/\ell)$.

The étale realization $\reet_{\ell}$ sends $\Z/\ell(n)$ to $\mu_{\ell}(\overline{\F})^{\otimes n}\cong \Z/\ell$, using the primitive $\ell$th root of unity $\zeta$. Since the heart of $\D{G_{\F},\Z/\ell}^{b}_{c}$ with respect to the standard t-structure is closed under extensions, we see that the image of $\fil(\F,\Z/\ell)$ under the étale realization is contained in the heart. Applying the realization to the weight filtration \cref{weight-filtration} of an object $M\in\fil(\F,\Z/\ell)$ therefore yields a discrete Galois module together with a unipotent filtration. This describes the functor in the statement of \cref{etale-realization-filtered}. Note in particular that under this identification, the étale realization corresponds to forgetting the (unipotent) filtration of the discrete Galois module.
\begin{proof}[Proof of \cref{etale-realization-filtered}.] 
  The fact that this is an equivalence of exact categories is~\cite[3.1]{positselski:artin-tate-motives}. (This obviously relies crucially on the truth of the Beilinson-Lichtenbaum conjecture.) We only want to explain why it is compatible with the tensor structure.

Let $\T$ stand for the category $\dtm(\F,\Z/\ell)$, and correspondingly $\T^{\geq n}$ and $\T^{<n}$ for the subcategories considered above. We first note that the tensor product sends $\T^{\geq p}\times \T^{\geq q}$ into $\T^{\geq p+q}$, as follows immediately from $\Z/\ell(m)\otimes\Z/\ell(m')=\Z/\ell(m+m')$. Also, note that every object $M\in\T$ sits in a functorial triangle
\begin{equation}\label{weight-triangle}
  W^{\geq n}M\to M\to W^{<n}M\to W^{\geq n}M[1].
\end{equation}
Now fix integers $p,q$, and objects $M,N\in\T$. The two facts just mentioned imply that the canonical morphism $W^{\geq p}M\otimes W^{\geq q}N\to M\otimes N$ factors through $W^{\geq p+q}(M\otimes N)$, and this defines a natural transformation $W^{\geq p}\otimes W^{\geq q}\to W^{\geq p+q}\circ\otimes$. For $M$ and $N$ in $\fil(\F,\Z/\ell)$, there is then an induced morphism
\begin{equation}\label{reet-monoidal}
  \underline{\reet}_{\ell,\zeta}(M)\otimes\underline{\reet}_{\ell,\zeta}(N)\to \underline{\reet}_{\ell,\zeta}(M\otimes N)
\end{equation}
of filtered modules, using the fact that $\reet_{\ell}$ is a tensor functor. By functoriality of this construction, it is obvious that \cref{reet-monoidal} endows $\underline{\reet}_{\ell,\zeta}$ with a lax symmetric unital monoidal structure.

It remains to check that \cref{reet-monoidal} is in fact invertible for all $M,N\in\fil(\F,\Z/\ell)$. It is certainly invertible if $M=\Z/\ell(m)$ and $N=\Z/\ell(n)$. Moreover, since the tensor product in $\dtm(\F,\Z/\ell)$ is exact in both variables, and the étale realization an exact functor, it follows that, in both variables, the set of objects for which \cref{reet-monoidal} is invertible is closed under extensions. Thus the claim.
\end{proof}

Let us take stock: we have found an exact tensor category $\fil(\F,\Z/\ell)$ inside the tt-category $\dtm(\F,\Z/\ell)$ which we understand reasonably well, by \cref{etale-realization-filtered}. The question arises whether $\dtm(\F,\Z/\ell)$ is simply the (bounded) derived category of $\fil(\F,\Z/\ell)$. This kind of question is in general very hard to answer, not least because there is no canonical functor in either direction. Typically, \emph{this} problem can be solved if the triangulated category has a model. In our case we don't know whether $\dtm(\F,\Z/\ell)$ does, but it is a triangulated subcategory of $\dm(\F,\Z/\ell)$ which does. This is enough for Positselski to construct an exact functor
\begin{equation}
  \label{comparison-exact-triangulated}
  \pos:\D{\fil(\F,\Z/\ell)}^{b}\to\dtm(\F,\Z/\ell).
\end{equation}
Instead of invoking Koszulity of the graded Galois cohomology algebra $\mathrm{H}^{\bullet}(G_{\F},\Z/\ell)$ as in~\cite[9.1]{positselski:artin-tate-motives}, we will show directly that $\pos$ is an equivalence if $\F$ is algebraically closed (\cref{pos-exact-triangulated}).

Unfortunately, it is not clear whether \cref{comparison-exact-triangulated} is compatible with the tensor structures, and although it should be possible to construct a tensor equivalence using an extension of the methods employed in~\cite[Appendix~D]{positselski:artin-tate-motives}, we leave it as an open problem for now. Instead, we will establish the following weaker result which is enough for our purposes.

\begin{pro}\label{dtm-tt-equivalence}
  Assume that \cref{comparison-exact-triangulated} is an equivalence. Then it induces, together with the equivalence~$\underline{\reet}_{\ell,\zeta}$ of \cref{etale-realization-filtered}, a bijection
  \begin{equation*}
    \{\text{tt-ideals of }\dtm(\F,\Z/\ell)\}\longleftrightarrow\{\text{tt-ideals of }\D{\filgalu(G_{\F},\Z/\ell)}^{b}\}
  \end{equation*}
\end{pro}
\begin{proof}
  In order to construct the exact functor of~\cref{comparison-exact-triangulated}, a weaker version of filtered triangulated categories is used in~\cite{positselski:artin-tate-motives}. One can easily phrase the proof to be given here in this language but we prefer to work with derivators which we feel yields a conceptually more satisfying argument. The reason is that in the context of derivators, the bounded derived category of an exact category has the expected universal property~\cite[2.17]{porta:stable-derivators-universal}.

We place ourselves in the following abstract situation: $\iota:\A\hookrightarrow\T$ is an exact tensor subcategory of a (possibly large) tt-category $\T$ which is the base of a stable monoidal derivator $\DT$ (defined on finite categories). We also assume that $\hom_{\T}(X,Y[-1])=0$ for all $X,Y\in\A$.

The universal property mentioned above yields an exact morphism of derivators $\DF:\DA\to\DT$, where $\DA$ denotes the derivator with base $\D{\A}^{b}$. It induces an exact functor $\mathcal{F}:\D{\A}^{b}\to\T$ which is the identity on $\A$, and is unique up to unique isomorphism for these properties. Now fix an object $M\in\A$ and consider the two functors
\begin{align*}
  \mathcal{F}_{1}:\D{\A}^{b}&\to \T&\mathcal{F}_{2}:\D{\A}^{b}&\to \T\\
  N&\mapsto \mathcal{F}(M\otimes N)&N&\mapsto M\otimes \mathcal{F}(N)
\end{align*}
They coincide on $\A$, and satisfy the ``Toda conditions''~\cite[2.17]{porta:stable-derivators-universal}
\begin{equation*}
  \hom_{\T}(\mathcal{F}_{i}X,\mathcal{F}_{j}Y[n])=0,\quad i\leq j, n<0, X,Y\in\A,
\end{equation*}
because $M\in\A$ and $\A$ is closed under tensor products. It follows again from the universal property that the associated functors $\DF_{1},\DF_{2}:\DA\to\DT$ are canonically isomorphic. We deduce in particular that the following square commutes on the level of objects:
\begin{equation}\label{weak-monoidality}
  \vcenter{\xymatrix{\A\times\D{\A}^{b}\ar[r]^-{\iota\times\mathcal{F}}\ar[d]_{\otimes}&\T\times\T\ar[d]^{\otimes}\\
    \D{\A}^{b}\ar[r]_-{\mathcal{F}}&\T}}
\end{equation}

We apply this to $\A=\fil(\F,\Z/\ell)$, and $\DT$ the derivator associated to the stable monoidal model category modeling $\DM(\F,\Z/\ell)$. It follows from the uniqueness statement of~\cite[A.17]{positselski:artin-tate-motives} that $\DF:\DA\to\DT$ in this case induces an exact equivalence $\mathcal{F}:\D{\fil(\F,\Z/\ell)}^{b}\xrightarrow{\sim}\dtm(\F,\Z/\ell)\subset\DM(\F,\Z/\ell)$. In particular, it induces a bijection of thick subcategories. The proof will be complete once we check that
\begin{itemize}
\item tt-ideals in $\D{\filgalu(G_{\F},\Z/\ell)}^{b}$ are exactly the thick subcategories closed under tensoring with $\Z/\ell(n)$ (\ie $\Z/\ell$ placed in filtration degree $n$), $n\in\Z$.
\item tt-ideals in $\dtm(\F,\Z/\ell)$ are exactly the thick subcategories closed under tensoring with $\Z/\ell(n)$, $n\in\Z$.
\end{itemize}
Indeed, $\Z/\ell(n)\in\A$; now apply commutativity of the diagram in \cref{weak-monoidality}.

The two bullet points are an immediate consequence of the fact that in both cases the objects $\Z/\ell(n)$ generate the category as a thick subcategory~\cite[see][3.6]{gallauer:tt-fmod}.
\end{proof}

\begin{pro}
  \label{pos-exact-triangulated}
  Assume $\F$ is algebraically closed. Then the exact functor of~(\ref{comparison-exact-triangulated}) provides an equivalence
  \begin{equation*}
    \pos:\D{\Z/\ell}^{b}_{\scriptscriptstyle\mathrm{fil}}\xrightarrow{\sim}\dtm(\F,\Z/\ell).    
  \end{equation*}
  between the (bounded) filtered derived category of $\Z/\ell$-vector spaces and the triangulated category of Tate motives over $\F$ with coefficients in $\Z/\ell$.
\end{pro}
\begin{proof}
  Since $\F$ is algebraically closed, the exact category $\filgalu(G_\F,\Z/\ell)$ identifies with $\filgal(\Z/\ell)$, the category of filtered (finite-dimensional) vector spaces over $\Z/\ell$. Its (bounded) derived category is the classical filtered derived category $\D{\Z/\ell}^{b}_{\scriptscriptstyle\mathrm{fil}}$.
  The category $\dtm(\F,\Z/\ell)$ is generated, as a thick subcategory, by objects of the form $\Z/\ell(n)$. These lie in $\filgal(\Z/\ell)$, and $\D{\Z/\ell}^{b}_{\scriptscriptstyle\mathrm{fil}}$ is idempotent complete, therefore it suffices to prove fully faithfulness of the functor.

  Let $M,N$ be two complexes in $\filgal(\Z/\ell)$ and let us prove that
  \begin{equation*}
    \hom_{\D{\Z/\ell}^{b}_{\scriptscriptstyle\mathrm{fil}}}(M,N)\xrightarrow{\pos}\hom_{\dtm(\F,\Z/\ell)}(\pos(M),\pos(N))
  \end{equation*}
  is bijective. By induction on the length of these complexes and the five-lemma we reduce to $M,N$ shifts of objects in $\filgal(\Z/\ell)$. Similarly, by induction on the length of the filtration we reduce to $M=\Z/\ell$ and $N=\Z/\ell(n)[m]$, some $n,m\in\Z$. In other words, we need to show that
  \begin{equation*}
    \hom_{\D{\Z/\ell}^{b}_{\scriptscriptstyle\mathrm{fil}}}(\Z/\ell,\Z/\ell(n)[m])\xrightarrow{\pos}\hom_{\dtm(\F,\Z/\ell)}(\Z/\ell,\Z/\ell(n)[m])\cong\Hmm^{m,n}(\F,\Z/\ell)
  \end{equation*}
  is bijective. Both sides vanish whenever $m<0$. It is automatically bijective for $m=0$ since $\filgal(\Z/\ell)$ is a full subcategory of $\dtm(\F,\Z/\ell)$, and the same holds for $m=1$ since the subcategory is closed under extensions, \cf~\cite{dyer:exact-triangulated}. By the Beilinson-Lichtenbaum conjecture recalled above, the right-hand side vanishes when $m\geq 2$ (in fact, when $m\geq 1$) and the same is true for the left-hand side since the t-structure on $\D{\Z/\ell}^{b}_{\scriptscriptstyle\mathrm{fil}}$ is strongly hereditary, see~\cite[7.6]{gallauer:tt-fmod}. This completes the proof.
\end{proof}

\section{tt-primes}
\label{sec:primes}

In this section we are going to determine the prime ideals in the triangulated category of Tate motives over certain algebraically closed fields. This will use the results in the previous section, as well as the results in~\cite{gallauer:tt-fmod} where we determined the tt-geometry of filtered modules. As in the étale case (\cref{sec:etale}) we will first treat the case of finite coefficients; the case of rational coefficients is the same as in the étale case due to the equivalence $\dtm(\F,\Q)\simeq\dtmet(\F,\Q)$.
\begin{con}
  If not mentioned explicitly otherwise we assume that $\F$ is algebraically closed throughout this section. (The only exception is \cref{etale-sheafification-homeomorphism}.)
\end{con}

Let $\ell$ be a prime invertible in $\F$, and fix a primitive $\ell$th root of unity $\zeta\in\mu_{\ell}(\F)$ which we interpret as a morphism $\beta:\Z/\ell(0)\to\Z/\ell(1)$ as in \cref{sec:positselski}. From the results of the previous section and~\cite[7.8]{gallauer:tt-fmod} we deduce the following result.
\begin{pro}\label{dtm-ell-tt-ideals}
  The tt-category $\dtm(\F,\Z/\ell)$ has a unique non-trivial tt-ideal given by
  \begin{equation*}
    \ker(\reet_{\ell})=\langle\cone(\beta)\rangle.
  \end{equation*}
\end{pro}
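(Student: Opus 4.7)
The plan is to combine \cref{dtm-tt-equivalence} with the classification of tt-ideals for filtered modules obtained in~\cite[7.8]{gallauer:tt-fmod}.

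First, since $\F$ is algebraically closed, the absolute Galois group $G_{\F}$ is trivial and the graded Galois cohomology algebra $\Hm^{\bullet}(G_{\F},\Z/\ell)=\Z/\ell$ is concentrated in degree zero; in particular it is trivially Koszul. Hence the hypothesis of \cref{dtm-tt-equivalence} is satisfied, providing a bijection between tt-ideals of $\dtm(\F,\Z/\ell)$ and tt-ideals of $\D{\filgalu(G_{\F},\Z/\ell)}^{b}$. Moreover, with $G_{\F}$ trivial, the unipotence condition is automatic, so $\filgalu(G_{\F},\Z/\ell)$ is just the category of finite-dimensional filtered $\Z/\ell$-vector spaces (with $\Z$-indexed filtrations).

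Next, invoke~\cite[7.8]{gallauer:tt-fmod} to conclude that $\D{\filgalu(G_{\F},\Z/\ell)}^{b}$ has a unique non-trivial tt-ideal, namely the kernel of the forget-filtration functor. Transporting along the bijection above, $\dtm(\F,\Z/\ell)$ likewise has exactly one non-trivial tt-ideal.

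It remains to identify this ideal with both $\ker(\reet_{\ell})$ and $\langle\cone(\beta)\rangle$. Recall from the discussion following \cref{etale-realization-filtered} that under the equivalence $\fil(\F,\Z/\ell)\simeq\filgalu(G_{\F},\Z/\ell)$, étale realization corresponds to forgetting the filtration. The morphism $\beta:\Z/\ell(0)\to\Z/\ell(1)$ in turn corresponds to the filtered map from $\Z/\ell$ placed in filtration degree $0$ to $\Z/\ell$ placed in filtration degree $1$, which becomes an isomorphism once the filtration is forgotten. Consequently $\cone(\beta)$ lies in $\ker(\reet_{\ell})$, and thus $\langle\cone(\beta)\rangle\subseteq\ker(\reet_{\ell})$. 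Neither tt-ideal is zero (as $\cone(\beta)\neq 0$, witnessed already on associated graded pieces) nor the whole category (as $\reet_{\ell}(\Z/\ell(0))\neq 0$), so by uniqueness they both coincide with the unique non-trivial tt-ideal.

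The main obstacle is conceptual rather than computational: one has to keep straight the chain of equivalences underlying \cref{dtm-tt-equivalence} and verify that $\beta$ and $\reet_{\ell}$ correspond, respectively, to the canonical ``filtration-shift'' map and to the forget-filtration functor on the filtered side. Once those identifications are in place, the statement is a direct transport of the classification from~\cite{gallauer:tt-fmod}.
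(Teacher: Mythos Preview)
Your proof is correct and follows essentially the same approach as the paper's: reduce via algebraic closure of $\F$ to the filtered derived category of $\Z/\ell$-vector spaces, invoke \cite[7.8]{gallauer:tt-fmod}, and transport the classification through \cref{dtm-tt-equivalence}. Your explicit verification that $\ker(\reet_{\ell})$ and $\langle\cone(\beta)\rangle$ are both non-trivial proper tt-ideals (hence equal by uniqueness) is slightly more detailed than the paper, which simply reads off the equality $\ker(\pi)=\langle\cone(\beta)\rangle$ directly from the cited result on the filtered side.
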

\begin{proof}
We may apply \cref{dtm-tt-equivalence} and \cref{pos-exact-triangulated} to replace $\dtm(\F,\Z/\ell)$ by $\D{\Z/\ell}^{b}_{\scriptscriptstyle\mathrm{fil}}$. The étale realization functor then is identified with the functor $\pi:\D{\Z/\ell}^{b}_{\scriptscriptstyle\mathrm{fil}}\to\D{\Z/\ell}^{b}$ which forgets the filtration.

In~\cite[7.8]{gallauer:tt-fmod} we studied the tt-category $\D{\Z/\ell}^{b}_{\scriptscriptstyle\mathrm{fil}}$, and found that it has a unique non-trivial tt-ideal given by $\ker(\pi)=\langle\cone(\beta)\rangle$.\footnote{In the category of filtered $\Z/\ell$-vector spaces, $\Z/\ell(0)$ (respectively $\Z/\ell(1)$) is the 1-dimensional vector space placed in filtration degree 0 (respectively 1). The map $\beta:\Z/\ell(0)\to\Z/\ell(1)$ is then given by the identity on the underlying 1-dimensional vector space.}
\end{proof}

Fix the invertible object $\Z/\ell(1)$ in $\dtm(\F,\Z/\ell)$ and define the graded central ring
\begin{equation*}
  \R_{\ell}^{\bullet}=\hom_{\dtm(\F,\Z/\ell)}(\Z/\ell,\Z/\ell(\bullet)).
\end{equation*}

\begin{samepage}
  \begin{cor}\label{dtm-ell-tt-spectrum}\mbox{}
    \begin{enumerate}
    \item The graded central ring $\R_{\ell}^{\bullet}$ is canonically
      isomorphic to the polynomial ring~$\Z/\ell[\beta]$.
    \item The comparison morphism
      \begin{equation*}
        \rho^{\bullet}_{\F,\ell}:\spec(\dtm(\F,\Z/\ell))\to\spech(\Z/\ell[\beta])
      \end{equation*}
      is an isomorphism of locally ringed spaces.
    \end{enumerate}
  \end{cor}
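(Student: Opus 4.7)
For part (1), the plan is to compute $\R^{n}_{\ell}$ directly from its definition. By \cref{motivic-cohomology}, this group is $\Hmm^{0,n}(\spec(\F),\Z/\ell)$. The vanishing for $n<0$ follows from the general dimension reasons discussed around \cref{fig:motcohpoint}. For $n\geq 0$, the Beilinson--Lichtenbaum conjecture (already invoked in \cref{sec:positselski}, itself a consequence of Bloch--Kato as recalled in \cref{sec:etale}) identifies this group with the Galois cohomology $\Hm^{0}(G_{\F},\mu_{\ell}(\overline{\F})^{\otimes n})$, and since $\F$ is algebraically closed the group $G_{\F}$ is trivial and this is canonically $\Z/\ell$. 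Since $\beta$ is by construction a generator in degree $1$, multiplicativity of the graded central ring then yields a canonical isomorphism $\R^{\bullet}_{\ell}\cong\Z/\ell[\beta]$.

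For part (2), the key input is \cref{dtm-ell-tt-ideals}, which pins down the tt-ideals of $\dtm(\F,\Z/\ell)$ as the chain $0\subsetneq\langle\cone(\beta)\rangle\subsetneq\dtm(\F,\Z/\ell)$. Via Balmer's bijection between tt-ideals of a small rigid tt-category and Thomason subsets of its tt-spectrum, the existence of exactly three tt-ideals forces $\spc(\dtm(\F,\Z/\ell))$ to be a spectral space with exactly two points, arranged as a single specialization. This is also the shape of $\spch(\Z/\ell[\beta])$, with generic point $(0)$ and irrelevant ideal $(\beta)$ as the closed point, so to conclude that $\rho^{\bullet}$ is a homeomorphism it suffices to verify it is a bijection.

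The bijection will be established by unwinding $\rho^{\bullet}(\mathfrak{P})=\{r\mid\cone(r)\notin\mathfrak{P}\}$ on the two tt-primes. For $\mathfrak{P}=0$: a morphism $c\beta^{n}:\Z/\ell\to\Z/\ell(n)$ has trivial cone precisely when $n=0$ and $c\in\Z/\ell$ is a unit, so $\rho^{\bullet}(0)$ is the irrelevant ideal $(\beta)$. For $\mathfrak{P}=\langle\cone(\beta)\rangle$: the octahedral axiom applied inductively to the composition $\Z/\ell\xrightarrow{\beta}\Z/\ell(1)\xrightarrow{\beta^{n-1}}\Z/\ell(n)$ places $\cone(\beta^{n})$ in $\langle\cone(\beta)\rangle$ for every $n\geq 1$, whereas the cone of a zero morphism contains the unit $\Z/\ell$ as a direct summand and hence cannot lie in any proper tt-ideal. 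This gives $\rho^{\bullet}(\langle\cone(\beta)\rangle)=(0)$, so $\rho^{\bullet}$ is a bijection and hence a homeomorphism. By the fact recalled in \cref{sec:conventions}, a homeomorphism $\rho^{\bullet}$ is automatically an isomorphism of locally ringed spaces.

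The only step I would expect to require some care is part (1), specifically confirming that the Beilinson--Lichtenbaum identification matches both the ring structure and the chosen generator $\beta$. Once $\R^{\bullet}_{\ell}\cong\Z/\ell[\beta]$ is established, part (2) is a purely formal matching of two two-point spectral spaces via \cref{dtm-ell-tt-ideals} and Balmer's Thomason correspondence.
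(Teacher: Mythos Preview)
Your proof is correct and follows essentially the same route as the paper's. For part~(1) the paper likewise appeals to Beilinson--Lichtenbaum (it simply says the identification ``can be deduced'' from it), and for part~(2) the paper also invokes \cref{dtm-ell-tt-ideals} to pin down the two-point spectrum and then checks that $\rho^{\bullet}$ respects the topology by observing that the unique non-trivial open $\{\langle\cone(\beta)\rangle\}$ is sent to the open locus where $\beta$ is invertible---precisely your computation $\rho^{\bullet}(\langle\cone(\beta)\rangle)=(0)$. Your explicit verification of $\rho^{\bullet}$ on both primes is a slightly more hands-on version of the same step; note that your sentence ``it suffices to verify it is a bijection'' is justified because $\rho^{\bullet}$ is spectral (hence continuous), and a continuous bijection between two Sierpi\'nski spaces is automatically a homeomorphism.
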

\end{samepage}
\begin{proof}
  The first part can be deduced from the Beilinson-Lichtenbaum conjecture, recalled in \cref{sec:positselski}. For the second part, it suffices to show that the map is a homeomorphism. The map is a bijection by \cref{dtm-ell-tt-ideals}. And the only non-trivial open $\{\langle\cone(\beta)\rangle\}$ is mapped to the open subset $U(\beta)$.
\end{proof}

\begin{cor}\label{support-tate-ell}
  The support of $\Z/\ell(0)$ in $\spc(\dtm(\F,\Z))$ is the subspace
  \begin{equation*}
    \xymatrix{\ker(\bc)\\
      \ker(\reet_{\ell})\ar@{-}[u]}
  \end{equation*}
  where $\bc:\dtm(\F,\Z)\to\dtm(\F,\Z/\ell)$ is the change of coefficients functor, and the specialization relation is indicated by the line going upward.
\end{cor}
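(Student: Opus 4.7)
The plan is to identify $\supp(\Z/\ell(0))$ with the fiber of Balmer's comparison morphism $\rho\colon\spc(\dtm(\F,\Z))\to\spec(\Z)$ over $(\ell)$, and then to enumerate this fiber via the change of coefficients $\bc$. Since $\F$ is algebraically closed, the central ring is $\End(\Z(0))=\Hmm^{0,0}(\F,\Z)=\Z$, and the defining triangle of $\Z/\ell(0)$ as the cone of $\ell\cdot\id_{\one}$ gives $\supp(\Z/\ell(0))=\rho^{-1}((\ell))$. Applying \cite[1.7]{balmer:surjectivity} to the adjunction $\bc\dashv\ff$ (noting $\ff(\one_{\Z/\ell})=\Z/\ell(0)$), exactly as in the proof of \cref{spec-dtmet}, this fiber equals the image of $\spc(\bc)\colon\spc(\dtm(\F,\Z/\ell))\to\spc(\dtm(\F,\Z))$.

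By \cref{dtm-ell-tt-ideals} and \cref{dtm-ell-tt-spectrum}, the source has exactly two primes: $0$ (the generic point) and $\langle\cone(\beta)\rangle$ (the closed point). Their images under $\spc(\bc)$ are $\ker(\bc)=\bc^{-1}(0)$ and $\bc^{-1}(\langle\cone(\beta)\rangle)$ respectively. Since $G_\F$ is trivial, rigidity identifies $\dmet(\F,\Z/\ell)$ with $\D{\Z/\ell}^{b}$, and by \cref{dtm-ell-tt-ideals} the étale sheafification $\aet\colon\dtm(\F,\Z/\ell)\to\D{\Z/\ell}^{b}$ has kernel precisely $\langle\cone(\beta)\rangle$; composing with $\bc$ to recover $\reet_{\ell}$ on $\dtm(\F,\Z)$ yields $\bc^{-1}(\langle\cone(\beta)\rangle)=\ker(\reet_{\ell})$.

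The main step left is showing the two image primes are genuinely distinct in $\spc(\dtm(\F,\Z))$. For this I would exhibit the witness $M=\ff(\cone(\beta))$: writing $\cone(\beta)$ as the cone of $\beta\colon\Z/\ell(0)\to\Z/\ell(1)$ and applying $\bc\ff$ termwise, together with the formula $\bc\ff(\Z/\ell(n))\simeq\Z/\ell(n)\oplus\Z/\ell(n)[1]$ (obtained by tensoring the triangle $\Z\xrightarrow{\ell}\Z\to\Z/\ell$ with $\Z/\ell(n)$, on which $\ell$ acts by zero), gives $\bc M\simeq\cone(\beta)\oplus\cone(\beta)[1]$. This is non-zero yet lies in $\langle\cone(\beta)\rangle$, so $M\in\ker(\reet_{\ell})\setminus\ker(\bc)$. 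The resulting strict inclusion $\ker(\bc)\subsetneq\ker(\reet_{\ell})$ induces exactly the specialization relation shown in the diagram, with $\ker(\bc)$ the closed point on top and $\ker(\reet_{\ell})$ the generic point below.
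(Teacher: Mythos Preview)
Your proof follows the same strategy as the paper's: identify $\supp(\Z/\ell(0))$ with the image of $\spc(\bc)$ via \cite[1.7]{balmer:surjectivity}, enumerate the two primes of $\dtm(\F,\Z/\ell)$ using \cref{dtm-ell-tt-spectrum}, and separate their images by exhibiting the witness $\ff\cone(\beta)\in\ker(\reet_{\ell})\setminus\ker(\bc)$. The paper does exactly this, deferring the last computation to \cref{cone-beta}.

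There is one genuine gap in your inline justification of $\bc\ff\cone(\beta)\simeq\cone(\beta)\oplus\cone(\beta)[1]$. You correctly obtain the splitting $\bc\ff(\Z/\ell(n))\simeq\Z/\ell(n)\oplus\Z/\ell(n)[1]$ on objects, but ``applying $\bc\ff$ termwise'' to the triangle only yields $\bc\ff\cone(\beta)=\cone(\bc\ff\beta)$, and you have not argued that under these splittings the map $\bc\ff\beta$ is the diagonal matrix $\mathrm{diag}(\beta,\beta[1])$. A priori $\bc\ff\beta$ is an arbitrary $2\times 2$ matrix with entries in the appropriate hom-groups, and its cone depends on more than the source and target. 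The paper's \cref{cone-beta} handles this by first observing that the off-diagonal entries vanish (they live in $\mu_\ell(\F)$-type groups and can be controlled), and then identifying the diagonal entries after applying $\ff$ using \cref{adjunction-faithful} and \cref{fbf}. Once you have the decomposition, your conclusions (non-vanishing of $\bc M$, membership in $\langle\cone(\beta)\rangle=\ker(\aet)$) follow immediately, so this is the only missing step.
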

\begin{proof}
Since $\Z/\ell$ is perfect over $\Z$, the corresponding change of coefficients functor $\bc$ has a right adjoint~$\ff$. By~\cite[1.7]{balmer:surjectivity}, the image of $\spc(\bc)$ is precisely the support of $\ff\bc(\Z)=\Z/\ell(0)$.

On the other hand, using \cref{dtm-ell-tt-spectrum} we see that the image of $\spc(\bc)$ is the set of primes in the statement. It follows from \cref{cone-beta} below that the two primes are distinct since
\begin{equation*}
  \ff\cone(\beta)\in \ker(\reet_{\ell})\backslash\ker(\bc).
\end{equation*}
 The inclusion $\ker(\bc)\subset\ker(\reet_{\ell})$ gives the specialization relation in the statement. (And of course there can be no other specialization relation by continuity of $\spc(\bc)$.)
\end{proof}

We now have a good understanding of the tt-spectrum of $\dtm(\F,R)$ for coefficients $R=\Z/\ell$ and $R=\Q$, and our last step consists in patching these two cases together. For this we will use the results on étale Tate motives in the following form.
\begin{lem}\label{etale-sheafification-homeomorphism}
  Let $\F$ be a field of exponential characteristic $p$, and assume that for every $\ell\neq p$ prime, $\F$ contains a primitive $\ell$th root of unity (respectively, $4$th root of unity if $\ell=2$). Assume also that $\F$ satisfies \cref{vanishing-hypothesis}. Then the étale sheafification induces a map
  \begin{equation*}
    \spc(\Z[1/p])\xrightarrow{\spc(\aet)}\spc(\dtm(\F,\Z[1/p]),
  \end{equation*}
  which is a homeomorphism onto the subspace $\{\prrat,\pret{\ell}\mid \ell\neq p\}$ of torsion objects $\prrat$ and $\reet_{\ell}$-acyclics $\pret{\ell}$.
\end{lem}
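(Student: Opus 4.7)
Using the identification $\spc(\dtmet(\F,\Z[1/p]))\cong\spc(\Z[1/p])$ of \cref{spec-dtmet}, the stated map is just the spectral map $\spc(\aet)$ precomposed with that isomorphism. My task is therefore (i) to compute the image points, and (ii) to verify that the subspace topology they inherit from $\spc(\dtm(\F,\Z[1/p]))$ coincides with the topology of $\spc(\Z[1/p])$.

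For (i), each point of $\spc(\Z[1/p])$ corresponds via \cref{spec-dtmet} to an explicit prime of $\dtmet(\F,\Z[1/p])$, whose pullback along $\aet$ I can identify. The generic point $\langle 0\rangle$ corresponds to the prime of rationally trivial étale Tate motives; pulling back along $\aet$ and using the rational equivalence \cref{rational-equivalence} (together with the conservativity argument of \cref{torsion}) recovers exactly the prime $\prrat$ of torsion motives. The closed point $\langle\ell\rangle$ corresponds---combining \cref{spec-dtmet} with \cref{spec-dmet-ell}---to étale Tate motives killed by $-\otimes\Z/\ell$; via Rigidity (\cref{rigidity}) its pullback along $\aet$ is exactly $\ker(\reet_{\ell})=\pret{\ell}$. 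Hence the image is $I:=\{\prrat,\pret{\ell}\mid\ell\neq p\}$.

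To separate the image points and realise singletons as closed sets in $I$, I use the family $\{\Z/\ell\mid\ell\neq p\}\subset\dtm(\F,\Z[1/p])$. Each $\Z/\ell$ is torsion (so $\Z/\ell\in\prrat$), has étale realisation $\reet_{\ell}(\Z/\ell)=\mu_{\ell}\cong\Z/\ell\neq 0$ (so $\Z/\ell\notin\pret{\ell}$), and satisfies $\reet_{\ell'}(\Z/\ell)=\Z/\ell\otimes\Z/\ell'=0$ for $\ell'\neq\ell$ (so $\Z/\ell\in\pret{\ell'}$). This already proves injectivity, and shows $\supp(\Z/\ell)\cap I=\{\pret{\ell}\}$; in particular every finite subset of $\{\pret{\ell}\}$ arises as $\supp(\bigoplus_{i}\Z/\ell_{i})\cap I$ and is thus closed in $I$.

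The crux is (ii): ruling out any further proper closed subsets of $I$, \ie showing that whenever $M\in\dtm(\F,\Z[1/p])$ has $\prrat\in\supp(M)$, then already $I\subseteq\supp(M)$. This is the topological content of the dichotomy from \cref{sec:motivic-cohomology}. If $M$ is non-torsion, I apply the second bullet of \cref{uct-etale} to $\aet M$---noting that with $R=\Z[1/p]$, every prime $\ell\neq p$ belongs to $\F^{\times}\backslash R^{\times}$---to get $\reet_{\ell}(M)\neq 0$ for every $\ell\neq p$, hence $I\subseteq\supp(M)$. If $M$ is torsion, the first bullet gives $\reet_{\ell}(M)=0$ for almost all $\ell$, so $\supp(M)\cap I$ is finite. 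Together with the previous step, the closed subsets of $I$ are exactly the finite subsets of $\{\pret{\ell}\}$ and $I$ itself, in bijection with the closed subsets of $\spc(\Z[1/p])$. The main obstacle is this last topology-matching; the earlier identifications are essentially formal consequences of results already established in the paper.
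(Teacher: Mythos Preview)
Your argument is correct, but it takes a longer route than necessary and than the paper. Two points are worth noting.

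First, the paper's proof is a one-liner: by naturality of Balmer's comparison map $\rho$ (\cref{comparison-natural}) and the identification $\rho:\spc(\dtmet(\F,\Z[1/p]))\xrightarrow{\sim}\spc(\Z[1/p])$ of \cref{spec-dtmet}, the composite
\[
\spc(\Z[1/p])\xrightarrow{\spc(\aet)}\spc(\dtm(\F,\Z[1/p]))\xrightarrow{\rho}\spc(\Z[1/p])
\]
is the identity. Any section of a continuous map is automatically a homeomorphism onto its image, so there is nothing further to check about the topology. The identification of the image points is then immediate. Your approach instead computes the subspace topology on $I$ explicitly, which is valid but much more work.

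Second, within your own approach, the invocation of the dichotomy \cref{uct-etale} in step~(ii) is redundant. Once you have established that $\spc(\aet)$ is a continuous bijection onto $I$ and that each singleton $\{\pret{\ell}\}$ is closed in $I$ (via $\supp(\Z/\ell)\cap I$), you are done: the proper closed subsets of $\spc(\Z[1/p])$ are exactly the finite sets of closed points, and their images are finite unions of the closed singletons $\{\pret{\ell}\}$, hence closed in $I$. A closed continuous bijection is a homeomorphism. You do not need to rule out ``further proper closed subsets of $I$''---that conclusion follows automatically \emph{from} the homeomorphism, rather than being needed to establish it. The dichotomy is genuinely used later (in \cref{cofinite-topology} and in the proof of \cref{dtm-primes}), but not here.
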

\begin{proof}
  By \cref{spec-dtmet} and naturality of $\rho$ [\citealp[5.6]{balmer:sss}; recalled in \cref{sec:tt-sheaf}], the étale sheafification induces a section to $\rho$ on the level of spectra:
  \begin{equation*}
    \spc(\Z[1/p])\xrightarrow{\spc(\aet)}\spc(\dtm(\F,\Z[1/p])\xrightarrow{\rho}\spc(\Z[1/p]),
  \end{equation*}
  and therefore a homeomorphism onto its image. It is obvious that this image is precisely $\{\prrat,\pret{\ell}\mid\ell\neq p\}$.
\end{proof}

With this preparation we can now state and easily prove our main result in this section.
\newenvironment{absolutelynopagebreak}
  {\par\nobreak\vfil\penalty0\vfilneg
   \vtop\bgroup}
  {\par\xdef\tpd{\the\prevdepth}\egroup
   \prevdepth=\tpd}

 \begin{absolutelynopagebreak}
   \begin{thm}\label{dtm-primes}
     Let $\F$ be an algebraically closed field of exponential
     characteristic $p$ which satisfies
     \cref{vanishing-hypothesis}. The primes of $\dtm(\F,\Z[1/p])$ are
     depicted in the following diagram, including the specialization
     relations pointing upward.  \vspace{0.2cm}
     \begin{center}
       \scalebox{0.75}{

}
     \end{center}
     \vspace{0.2cm} Here, $\ell$ runs through all prime numbers
     different from $p$, and the prime tensor ideals are defined by
     the vanishing of the cohomology theories as indicated on the
     right.
   \end{thm}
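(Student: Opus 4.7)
The plan is to analyse the comparison map
\[
\rho:\spc(\dtm(\F,\Z[1/p]))\to\spec(\Z[1/p])
\]
fibrewise, identify each fibre using the torsion and rational spectra already computed, and then determine the specialization relations among the resulting primes.

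Every prime lies over a unique point of $\spec(\Z[1/p])$, which is either $(0)$ or $(\ell)$ for some prime $\ell\neq p$. Over the generic point $(0)$, \cref{Q-localization} together with the cartesian square describing central localization \cite[5.6]{balmer:sss} identifies the fibre with $\spc(\dtm(\F,\Q))$, which is a single point by \cref{spec-dtm-rational}; call this prime $\prrat$. Over a closed point $(\ell)$, the quotient $\Z/\ell=\Z[1/p]/\ell$ is perfect over $\Z[1/p]$, so the change-of-coefficients adjunction $\bc\dashv\ff$ applies; since $\ff\bc(\Z[1/p])=\Z/\ell(0)$, \cite[1.7]{balmer:surjectivity} identifies this fibre with the image of $\spc(\bc):\spc(\dtm(\F,\Z/\ell))\to\spc(\dtm(\F,\Z[1/p]))$, exactly as in \cref{support-tate-ell}. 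By \cref{dtm-ell-tt-spectrum} the source has two points, giving two distinct primes $\prmod{\ell}=\ker(\bc)$ and $\pret{\ell}=\ker(\reet_{\ell})$, distinguished by the fact that $\ff\cone(\beta)$ lies in $\pret{\ell}\setminus\prmod{\ell}$. The cohomological descriptions then follow immediately: $\prrat$ is the ideal of objects with vanishing rational motivic cohomology (\cref{torsion}), $\prmod{\ell}$ is the ideal of objects with vanishing mod-$\ell$ motivic cohomology (\cref{mod-ell-cohomology}), and $\pret{\ell}$ is by construction the ideal of objects with vanishing mod-$\ell$ étale cohomology (\cref{etale-cohomology}).

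To establish the specialization relations, I use that for tt-spectra $\mathfrak{P}\leadsto\mathfrak{Q}$ corresponds to $\mathfrak{Q}\subset\mathfrak{P}$. Within each $\ell$-fibre, the inclusion $\prmod{\ell}\subset\pret{\ell}$ holds because $\reet_{\ell}$ factors through $\bc$, giving $\pret{\ell}\leadsto\prmod{\ell}$ as already recorded in \cref{support-tate-ell}. Between fibres, the key inclusion is $\pret{\ell}\subset\prrat$ for every $\ell\neq p$, which yields the specialization $\prrat\leadsto\pret{\ell}$. This is the content-bearing step: it is the contrapositive of the second bullet of \cref{uct-etale}, applied with $R=\Z[1/p]$; the hypothesis $\ell\in\F^{\times}\setminus\Z[1/p]^{\times}$ is automatic because $\F$ is algebraically closed of exponential characteristic $p$, so that each $\ell\neq p$ is invertible in $\F$ but not in $\Z[1/p]$. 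Thus any $M$ with $M\otimes\Q\neq 0$ must have nontrivial mod-$\ell$ étale cohomology, forcing $\pret{\ell}\subset\prrat$. Finally, since $\rho$ is a spectral map and the closed points $(\ell)\in\spec(\Z[1/p])$ are pairwise incomparable for distinct $\ell$, no specializations can occur across different $\ell$-fibres, so the picture is exactly as depicted.

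The main obstacle is the inclusion $\pret{\ell}\subset\prrat$: it is this step that ensures $\prrat$ is the unique generic point and rules out additional components of the spectrum. Its proof relies essentially on the universal-coefficient dichotomy \cref{uct-etale}. Everything else is a formal assembly of the mod-$\ell$ spectrum \cref{dtm-ell-tt-spectrum}, the rational spectrum \cref{spec-dtm-rational}, and Balmer's fibre and surjectivity results.
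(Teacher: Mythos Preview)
Your approach matches the paper's: analyse $\rho$ fibrewise, invoke \cref{support-tate-ell} over each $(\ell)$ and \cref{spec-dtm-rational} over $(0)$, then establish the remaining specialization $\prrat\rightsquigarrow\pret{\ell}$. The only divergence is in this last step. The paper invokes \cref{etale-sheafification-homeomorphism}, which (via \cref{spec-dtmet}) exhibits $\{\prrat,\pret{\ell}\mid\ell\neq p\}$ as the image of $\spc(\aet)$, a homeomorphic copy of $\spc(\Z[1/p])$ inside $\spc(\dtm(\F,\Z[1/p]))$, and reads the specialization off from there; you instead argue the inclusion $\pret{\ell}\subset\prrat$ directly from \cref{uct-etale}. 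Both routes ultimately rest on \cref{uct-etale}, so the difference is cosmetic.

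There is, however, a small gap in your direct route: \cref{uct-etale} is stated for $M\in\dtmet(\F,R)$, whereas you apply it to $M\in\dtm(\F,\Z[1/p])$. You need one sentence of bridging: apply \cref{uct-etale} to $\aet(M)$, and observe that both $\pret{\ell}$ and $\prrat$ in $\dtm$ are pulled back along $\aet$ from the corresponding primes in $\dtmet$---the first by the very definition of $\reet_{\ell}$ as a composite through $\aet$, the second because $\aet$ is an equivalence rationally (\cref{rational-equivalence}). The inclusion in $\dtmet$ then pulls back to the desired inclusion in $\dtm$. The paper's packaging via \cref{etale-sheafification-homeomorphism} absorbs precisely this step into the identification $\spc(\dtmet(\F,\Z[1/p]))\cong\spc(\Z[1/p])$.
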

 \end{absolutelynopagebreak}
\begin{proof}
  The central ring $\R=\hom_{\dtm(\F,\Z[1/p])}(\Z[1/p],\Z[1/p])$ is simply $\Z[1/p]$, and we get a canonical map
  \begin{equation*}
    \rho:\spc(\dtm(\F,\Z[1/p]))\to \spc(\Z[1/p])
  \end{equation*}
  which we analyse fiberwise, \ie we identify the primes in each fiber of $\rho$ with the corresponding primes in the statement of the Theorem.
  \begin{itemize}
  \item For a prime $\ell\neq p$ we have
    \begin{align*}
      \rho^{-1}(\langle\ell\rangle)&=\{\mathfrak{P}\mid\ell\in\rho(\mathfrak{P})\}\\
      &=\{\mathfrak{P}\mid\Z/\ell(0)=\cone(\ell)\notin\mathfrak{P}\}\\
      &=\supp(\Z/\ell(0)),
    \end{align*}
    and therefore we can apply \cref{support-tate-ell}.
  \item Over the generic point, the fiber is the spectrum of the central localization of $\dtm(\F,\Z[1/p])$ at $\Z\backslash 0$ (see~\cite[5.6]{balmer:sss} or \cref{spec-localization-cartesian-explicit}). Up to idempotent completion, this localization is $\dtm(\F,\Q)$, by \cref{Q-localization}. The contention now follows from \cref{spec-dtm-rational}.
  \end{itemize}
  At this point all that remains to be justified is the specialization relation $\prrat\rightsquigarrow\pret{\ell}$, for every $\ell\neq p$. This follows from \cref{etale-sheafification-homeomorphism}.
\end{proof}

\section{Classification of tt-ideals}
\label{sec:tt-ideals}
\begin{con}
  Throughout this section we fix a field $\F$ as in \cref{dtm-primes}.
\end{con}
We determined the prime ideals of $\dtm(\F,\Z[1/p])$, as well as the specialization relations among these. This is not enough to determine the tt-spectrum as a topological space, nor its Thomason subsets. The goal of this section is to remedy this, and then deduce the classification of the tt-ideals in $\dtm(\F,\Z[1/p])$.

The main input we need was already proved in \cref{uct}. Here is the tt-geometric content of this result.
\begin{pro}\label{cofinite-topology}
  The topology of $\spc(\dtm(\F,\Z[1/p]))$ is coarser than the cofinite topology.
\end{pro}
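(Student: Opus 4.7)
The plan is to reduce the claim to showing that each individual support $\supp(a)$ is either the whole space or a finite set, and then to split according to whether $a$ is torsion or not, invoking the dichotomy of \cref{uct}.

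The supports $\{\supp(a)\}_{a\in\dtm(\F,\Z[1/p])}$ form a base for the closed subsets of $\spc(\dtm(\F,\Z[1/p]))$, and they are closed under finite unions via $\supp(a\oplus b)=\supp(a)\cup\supp(b)$. Hence every closed set is an intersection of supports, and it suffices to prove that for each $a$, $\supp(a)$ is either all of $\spc(\dtm(\F,\Z[1/p]))$ or finite. I split on whether $\prrat\in\supp(a)$, which by \cref{torsion} is equivalent to $a$ being non-torsion.

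Suppose first that $a$ is non-torsion, so $\prrat\in\supp(a)$. By the explicit description of the spectrum in \cref{dtm-primes}, every other prime $\pret{\ell}$ and $\prmod{\ell}$ is a specialization of $\prrat$. Since $\supp(a)$ is closed and hence stable under specialization, we conclude that $\supp(a)$ is the entire tt-spectrum.

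Suppose next that $a$ is torsion, i.e.\ $\Hmm^{\bullet,\bullet}(a,\Q)=0$. The first bullet of \cref{uct} yields $\Hmm^{\bullet,\bullet}(a,\Z/\ell)=0$ for all but finitely many primes $\ell$, and by \cref{mod-ell-cohomology} this is equivalent to $a\otimes\Z/\ell=0$, i.e.\ $a\in\prmod{\ell}\subseteq\pret{\ell}$. Consequently, for all but finitely many $\ell$, neither $\prmod{\ell}$ nor $\pret{\ell}$ lies in $\supp(a)$; together with $\prrat\notin\supp(a)$, this confines $\supp(a)$ to a finite subset of the spectrum.

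The only non-formal ingredient is the universal-coefficient dichotomy of \cref{uct}; the rest is topological bookkeeping against the classification of primes in \cref{dtm-primes}, so I do not anticipate any serious obstacle.
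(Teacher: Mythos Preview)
Your proof is correct and uses essentially the same ingredients as the paper's: the genericity of $\prrat$ established in \cref{dtm-primes} together with the first bullet of \cref{uct}. Your organization is in fact slightly cleaner---splitting directly on whether $\prrat\in\supp(a)$---whereas the paper splits on whether $\supp(M)\cap\{\pret{\ell}\}$ or $\supp(M)\cap\{\prmod{\ell}\}$ is infinite (invoking \cref{etale-sheafification-homeomorphism} explicitly in the first case and the contrapositive of \cref{uct} in the second), but the content is the same.
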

\begin{proof}
  Since the sets $\supp(M)$ with $M\in\dtm(\F,\Z[1/p])$ generate the closed subsets for the topology it suffices to show that if $\supp(M)$ is infinite then it is already the whole space. \cref{etale-sheafification-homeomorphism} implies that if $\supp(M)\cap \{\pret{\ell}\mid\ell\neq p\}$ is infinite then $\supp(M)$ is the whole space. Otherwise $\supp(M)\cap \{\prmod{\ell}\mid\ell\neq p\}$ must be infinite, \ie for infinitely many primes $\ell$, the mod-$\ell$ motivic cohomology of $M$ is non-trivial. By \cref{uct}, $M$ has nontrivial rational motivic cohomology, \ie $\prrat\in\supp(M)$, and this shows that $\supp(M)$ is the whole space.
\end{proof}

\begin{cor}\label{closed-subsets}
  For a proper subset $Z\subsetneq\spc(\dtm(\F,\Z[1/p]))$ the following are equivalent:
  \begin{enumerate}
  \item $Z$ is closed.
  \item $Z$ is finite and specialization closed.
  \end{enumerate}
\end{cor}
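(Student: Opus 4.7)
The plan is to prove both implications using only the explicit description of the spectrum in \cref{dtm-primes} together with \cref{cofinite-topology}. Both directions should be short, since the hard work has already been done.

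For $(1) \Rightarrow (2)$: finiteness of $Z$ is immediate from \cref{cofinite-topology}, because $Z$ is proper and the topology is coarser than the cofinite one. Closed sets are automatically specialization closed in any topological space: if $x \in Z = \overline{Z}$ and $x \rightsquigarrow y$, then $y \in \overline{\{x\}} \subseteq \overline{Z} = Z$.

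For $(2) \Rightarrow (1)$, the strategy is to exhibit $Z$ as a finite union of closures of points. First I would observe that since $\prrat$ specializes to every other prime (by the diagram in \cref{dtm-primes}), a specialization closed subset containing $\prrat$ must be the whole space; hence, as $Z$ is proper, $\prrat \notin Z$. So $Z$ sits inside $\{\pret{\ell},\prmod{\ell}\mid\ell\neq p\}$. Again reading off \cref{dtm-primes}, each $\prmod{\ell}$ has no proper specializations, so $\overline{\{\prmod{\ell}\}}=\{\prmod{\ell}\}$ is closed; and $\overline{\{\pret{\ell}\}}=\{\pret{\ell},\prmod{\ell}\}$ is also closed by definition. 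Being specialization closed and finite, $Z$ equals the union $\bigcup_{x\in Z}\overline{\{x\}}$ of the closures of its (finitely many) elements, hence is a finite union of closed sets.

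I do not anticipate a real obstacle: once \cref{cofinite-topology} is in hand, together with the full specialization diagram of \cref{dtm-primes} (in particular, the fact that $\prmod{\ell}$ are closed points and that $\prrat$ is the unique generic point), the argument is essentially a structural check. The only thing worth stating carefully is the reduction away from $\prrat$, which crucially uses the properness of $Z$.
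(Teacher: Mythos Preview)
Your proposal is correct and is essentially the argument the paper expects the reader to supply; the paper states this corollary without proof, as an immediate consequence of \cref{cofinite-topology} and the specialization diagram in \cref{dtm-primes}. Your explicit reduction away from $\prrat$ and the identification of $\overline{\{\prmod{\ell}\}}$ and $\overline{\{\pret{\ell}\}}$ are exactly the routine checks being left implicit.
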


\begin{cor}\label{noetherian}
  The topological space $\spc(\dtm(\F,\Z[1/p]))$ is noetherian.
\end{cor}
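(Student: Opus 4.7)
The plan is to deduce this immediately from \cref{closed-subsets}. Recall that a topological space is noetherian if and only if every descending chain of closed subsets stabilizes. So I would take an arbitrary descending chain $Z_0 \supseteq Z_1 \supseteq Z_2 \supseteq \cdots$ of closed subsets of $\spc(\dtm(\F,\Z[1/p]))$ and show it must stabilize.

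There are two cases. If every $Z_i$ equals the whole space, there is nothing to prove. Otherwise, let $j$ be the smallest index with $Z_j$ a proper subset. Then for all $i \geq j$, $Z_i \subseteq Z_j$ is also a proper closed subset, so by \cref{closed-subsets} each $Z_i$ is finite. A descending chain of finite sets stabilizes, so we are done.

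I do not anticipate any real obstacle: the hard work has already been carried out in \cref{uct} and \cref{cofinite-topology}, which together force the topology of $\spc(\dtm(\F,\Z[1/p]))$ to be so coarse that no infinite strictly descending chain of closed subsets can exist. The noetherianity is essentially a formal consequence.
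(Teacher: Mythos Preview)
Your proposal is correct and matches the paper's approach: the corollary is stated without proof, as an immediate consequence of \cref{closed-subsets} (or directly of \cref{cofinite-topology}), and your argument spells out exactly this deduction.
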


We are now in a position to classify the tt-ideals in $\dtm(\F,\Z[1/p])$. In order to state the classification concisely, let us introduce the following notation:
\begin{itemize}
\item $\PP=\{\text{prime numbers }\ell\text{ different from }p\}$,
\item for every $\ell\in\PP$, choose a Bott element $\beta_{\ell}:\Z/\ell\to \Z/\ell(1)$ in $\dtm(\F,\Z/\ell)$, \ie a primitive $\ell$th root of unity; we denote abusively by $\cone(\beta_{\ell})$ the image of its cone in $\dtm(\F,\Z[1/p])$ under the right adjoint $\ff$ of the change of coefficients functor.
\end{itemize}

\begin{thm}\label{tt-ideals}
   Let $\F$ be an algebraically closed field of exponential characteristic $p$ which satisfies \cref{vanishing-hypothesis}. The following two maps are inverses to each other and set up a bijection
  \begin{align*}
    \{\text{proper tt-ideals in }\dtm(\F,\Z[1/p])\}&\longleftrightarrow\{\text{subsets } \mathcal{E}\subset\mathcal{M}\subset\PP\}\\
    \mathcal{I}&\longmapsto \{\ell\mid\Hmet^{\bullet}(\mathcal{I},\Z/\ell)\neq 0\}\subset\{\ell\mid\Hmm^{\bullet,\bullet}(\mathcal{I},\Z/\ell)\neq 0\}\\
    \langle\cone(\beta_{\ell}),\Z/\ell'(0)\mid\ell\in\mathcal{M},\ell'\in\mathcal{E}\rangle&\longmapsfrom (\mathcal{E}\subset\mathcal{M})
  \end{align*}
\end{thm}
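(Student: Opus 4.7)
The plan is to combine Balmer's classification of tt-ideals with the explicit description of $\spc(\dtm(\F,\Z[1/p]))$ obtained in \cref{dtm-primes}. Since the spectrum is noetherian (\cref{noetherian}), Thomason subsets coincide with specialization-closed ones, and \cite[4.10]{balmer:spectrum} yields a bijection between tt-ideals and specialization-closed subsets $Y\subseteq\spc(\dtm(\F,\Z[1/p]))$, via $\mathcal{I}\mapsto\supp(\mathcal{I})$ with inverse $Y\mapsto\{a\mid\supp(a)\subseteq Y\}$. Moreover, a straightforward primeness argument gives $\supp(\langle S\rangle)=\bigcup_{s\in S}\supp(s)$ for any set $S$ of objects.

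I next parametrize the proper specialization-closed subsets. By \cref{dtm-primes} the specialization relations are precisely $\prrat\rightsquigarrow\pret{\ell}\rightsquigarrow\prmod{\ell}$ for $\ell\in\PP$, so the closure of $\prrat$ is the whole space and any proper specialization-closed $Y$ avoids $\prrat$. Hence $Y$ has the unique form
\[
Y=\{\prmod{\ell}\mid\ell\in\mathcal{M}\}\cup\{\pret{\ell}\mid\ell\in\mathcal{E}\}
\]
with $\mathcal{E}\subset\mathcal{M}\subset\PP$; the inclusion $\mathcal{E}\subset\mathcal{M}$ is forced by closure under $\pret{\ell}\rightsquigarrow\prmod{\ell}$.

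To match this parametrization with the cohomological data in the statement, I unwind membership $\mathfrak{P}\in\supp(a)$ at both types of primes. Since $\prmod{\ell}=\ker(\bc)$ and $\pret{\ell}=\ker(\reet_{\ell})$, combining \cref{mod-ell-cohomology} with the immediate étale analogue (that $\reet_{\ell}(a)$ is determined by its cohomology), one gets
\begin{align*}
\prmod{\ell}\in\supp(\mathcal{I}) &\iff \Hmm^{\bullet,\bullet}(\mathcal{I},\Z/\ell)\neq 0,\\
\pret{\ell}\in\supp(\mathcal{I}) &\iff \Hmet^{\bullet}(\mathcal{I},\Z/\ell)\neq 0,
\end{align*}
which identifies $\mathcal{I}\mapsto\supp(\mathcal{I})$ with $\mathcal{I}\mapsto(\mathcal{E},\mathcal{M})$ of the statement.

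Finally, I identify the inverse map with the prescribed generators by computing their supports. \cref{support-tate-ell} gives $\supp(\Z/\ell(0))=\{\prmod{\ell},\pret{\ell}\}$. For $\cone(\beta_{\ell})\in\dtm(\F,\Z[1/p])$, I observe that it is $\ell$-torsion (so lies in $\prrat$ and in $\prmod{\ell'},\pret{\ell'}$ for every $\ell'\neq\ell$), lies in $\pret{\ell}$ by \cref{dtm-ell-tt-ideals}, but does not lie in $\prmod{\ell}$ by the argument already used in the proof of \cref{support-tate-ell}; hence $\supp(\cone(\beta_{\ell}))=\{\prmod{\ell}\}$. Invoking $\supp(\langle S\rangle)=\bigcup_{s\in S}\supp(s)$ together with the inclusion $\mathcal{E}\subset\mathcal{M}$, the tt-ideal $\langle\cone(\beta_{\ell}),\Z/\ell'(0)\mid\ell\in\mathcal{M},\ell'\in\mathcal{E}\rangle$ has support precisely $Y$, so it coincides with $\mathcal{I}_{Y}$ by the Balmer bijection. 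The only mild subtlety lies in distinguishing the two primes over $\langle\ell\rangle$, which is exactly what makes a pair $\mathcal{E}\subset\mathcal{M}$ (rather than a single set) the correct parametrization.
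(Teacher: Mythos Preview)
Your proof is correct and follows the same approach as the paper: noetherianness of the spectrum (\cref{noetherian}) reduces Thomason subsets to specialization-closed ones, which are then parametrized by pairs $\mathcal{E}\subset\mathcal{M}\subset\PP$, and Balmer's classification \cite[4.10]{balmer:spectrum} finishes the argument. The paper's proof is essentially a three-sentence sketch leaving the explicit identification of the two maps to the reader; you have carefully filled in those details, in particular the computation $\supp(\ff\cone(\beta_{\ell}))=\{\prmod{\ell}\}$ and the verification that the cohomological conditions recover $(\mathcal{E},\mathcal{M})$ from $\supp(\mathcal{I})$.
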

\begin{proof}
  Since $\spc(\dtm(\F,\Z[1/p]))$ is noetherian (\cref{noetherian}), the Thomason subsets are precisely the specialization closed ones. It is then clear that the proper Thomason subsets correspond bijectively to $\{\text{subsets } \mathcal{E}\subset\mathcal{M}\subset\PP\}$. One now applies~\cite[4.10]{balmer:spectrum}.
\end{proof}

\begin{exa}\label{etale-sheafification-kernel}
  The étale sheafification functor
  \begin{equation*}
    \aet:\dtm(\F,\Z[1/p])\to\dtmet(\F,\Z[1/p])
  \end{equation*}
  is a non-trivial tt-functor and its kernel therefore a proper tt-ideal. It corresponds to the subsets $\emptyset=\mathcal{E}\subset\mathcal{M}=\PP$. We must then have
  \begin{equation*}
    \ker(\aet)=\langle\cone(\beta_{\ell})\mid \ell\in\PP\rangle.
  \end{equation*}
In fact, we will prove in \cref{sec:bott} that $\aet$ is a Verdier localization  of the tt-category $\dtm(\F,\Z[1/p])$ at $\cone(\beta_{\ell})$ for all $\ell\neq p$.
\end{exa}

\section{Structure sheaf}
\label{sec:spec}
\begin{con}
  We continue to denote by $\F$ an algebraically closed field of exponential characteristic $p$, satisfying \cref{vanishing-hypothesis}.
\end{con}
At this point we know $\spec(\dtm(\F,\Z[1/p]))$ as a topological space, and in this last section we want to describe the structure sheaf of this locally ringed space. We denote it simply by $\mathcal{O}_{\F}$.

\begin{pro}\label{spc-sheaf}
  Let $\mathcal{O}_{\F}^{\et}$ denote the structure sheaf on $\dtmet(\F,\Z[1/p])$ (which is according to \cref{spec-dtmet} essentially just $\Z[1/p]$). The canonical map
  \begin{equation*}
    \mathcal{O}_{\F}\to\spc(\aet)_{*}\mathcal{O}_{\F}^{\et}
  \end{equation*}
  is an isomorphism.
\end{pro}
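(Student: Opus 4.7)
The plan is to verify the canonical morphism is an isomorphism by checking on stalks at every point of $\spc(\dtm(\F,\Z[1/p]))$. By \cref{dtm-primes} the primes are $\prrat$, $\pret{\ell}$ and $\prmod{\ell}$ (for $\ell\neq p$), and by \cref{noetherian} the space is noetherian, so stalks can be computed as filtered colimits along any cofinal family of quasi-compact open neighborhoods.

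For a prime $\mathfrak{P}=\prrat$ or $\mathfrak{P}=\prmod{\ell}$, we use the cofinal family of opens $U(\cone(n))$ for nonzero $n\in\Z$---at $\prmod{\ell}$ restricted to the subfamily with $\ell\nmid n$, which is cofinal because specialization-closure of the closed complement $Z$ forces $\pret{\ell}\notin Z$ whenever $\prmod{\ell}\notin Z$. By Balmer's central localization \cite[5.6]{balmer:sss}, the motivic sections are $\mathcal{O}_{\F}(U(\cone(n)))=\Z[1/pn]$; on the étale side $\spc(\aet)^{-1}(U(\cone(n)))=\spec(\Z[1/pn])$ since no $\prmod{\ell'}$ lies in the image of $\spc(\aet)$, so the pushforward sections are likewise $\Z[1/pn]$ by \cref{spec-dtmet}, and the canonical comparison is the identity on $\Z[1/pn]$. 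Passing to colimits yields stalks $\Q$ at $\prrat$ and $\Z[1/p]_{(\ell)}$ at $\prmod{\ell}$ on both sides.

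For the point $\mathfrak{P}=\pret{\ell}$ we instead exploit that $\ker(\aet)=\langle\cone(\beta_{\ell'})\mid\ell'\neq p\rangle$ is contained in $\pret{\ell}$: $\reet_\ell$ kills every $\ell'$-torsion object when $\ell'\neq\ell$ (since $\ell'$ is then invertible in $\Z/\ell$), while $\cone(\beta_\ell)$ is $\reet_\ell$-acyclic by construction. The induced factorization of $\aet$ through the local tt-category at $\pret{\ell}$ gives an equivalence $(\dtm(\F,\Z[1/p])/\pret{\ell})^{\natural}\simeq(\dtmet(\F,\Z[1/p])/\pret{\ell}^{\et})^{\natural}$, so the motivic stalk equals $\End(\one)$ in this common quotient, namely $\Z[1/p]_{(\ell)}$ by \cref{spec-dtmet}; the pushforward stalk is $\Z[1/p]_{(\ell)}$ as well, since $\spc(\aet)$ is a homeomorphism onto $\{\prrat,\pret{\ell'}\}$ by \cref{etale-sheafification-homeomorphism}, so open neighborhoods of $\pret{\ell}$ in $\spc(\dtm(\F,\Z[1/p]))$ restrict cofinally to open neighborhoods of $\pret{\ell}^{\et}$ in $\spc(\dtmet(\F,\Z[1/p]))$. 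All stalks agree, so the canonical morphism is an isomorphism of sheaves. The main subtlety lies at $\prmod{\ell}$: both stalks must coincide with $\Z[1/p]_{(\ell)}$ even though $\prmod{\ell}$ is not in the image of $\spc(\aet)$, and establishing this depends crucially on the cofinality argument via $\{U(\cone(n))\mid\ell\nmid n\}$ together with \cref{spec-dtmet}.
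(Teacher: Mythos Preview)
Your overall strategy of checking stalks is the same as the paper's, and your treatment of $\prrat$ and $\prmod{\ell}$ via the cofinal family $\{U(\cone(n))\}$ and central localization is correct and pleasantly direct. However, there is a genuine gap at $\pret{\ell}$.

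You assert that the factorization of $\aet$ yields an equivalence $(\dtm(\F,\Z[1/p])/\pret{\ell})^{\natural}\simeq(\dtmet(\F,\Z[1/p])/\pret{\ell}^{\et})^{\natural}$, but the only justification you give is that $\ker(\aet)\subset\pret{\ell}$. This inclusion guarantees that the induced functor $\dtm/\pret{\ell}\to\dtmet/\pret{\ell}^{\et}$ is conservative, not that it is fully faithful. To conclude the equivalence you must know that $\aet$ itself is a Verdier localization, i.e.\ that $(\dtm/\ker(\aet))^{\natural}\simeq\dtmet$; only then does further quotienting by $\pret{\ell}\supset\ker(\aet)$ give the claimed identification. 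This is exactly the content of \cref{bott-invert}, which you never invoke. Without it you have no computation of the motivic stalk $\R_{\dtm/\pret{\ell}}$: you know it receives a local map from $\Z_{(\ell)}$, but nothing rules out extra endomorphisms of $\one$ appearing after killing $\pret{\ell}$. Note also that your cofinality trick cannot be reused here: opens of the form $U(\cone(n))$ with $\ell\nmid n$ always contain $\prmod{\ell}$, so they are not cofinal among neighborhoods of $\pret{\ell}$.

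The paper's proof faces the same issue and resolves it by citing \cref{bott-invert} together with \cref{spec-localization-sheaf}: once $\aet$ is known to be a Verdier localization, the structure sheaf on the étale side is literally the restriction of $\mathcal{O}_{\F}$ to the image of $\spc(\aet)$, handling $\prrat$ and all $\pret{\ell}$ simultaneously. So your route is not more elementary overall---you still need \cref{bott-invert} at $\pret{\ell}$---and once you grant it, the paper's uniform treatment of the non-closed points is shorter than splitting off $\prrat$ separately.
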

\begin{proof}
By \cref{bott-invert}, we know that the functor $\aet$ is a Verdier localization, and by \cref{etale-sheafification-kernel}, the kernel is $\langle\cone(\beta_{\ell})\mid\ell\neq p\rangle$. By \cref{spec-localization-sheaf}, the map in the statement of the Proposition is an isomorphism on all stalks in the image of $\spc(\aet)$, \ie on all non-closed points. For a closed point $\prmod{\ell}$, $\ell\neq p$, we may localize at $\ell$ and consider the functor
\begin{equation*}
  \aet:\dtm(\F,\Z_{\langle\ell\rangle})\to\dtmet(\F,\Z_{\langle\ell\rangle})
\end{equation*}
instead (cf.~\cref{spec-localization-cartesian-explicit}). In that case the prime $\prmod{\ell}$ is the zero ideal (the category $\dtm(\F,\Z_{\langle\ell\rangle})$ is local; cf.\ \cref{cyclotomic-field}) hence
    \begin{equation*}
      \mathcal{O}_{\F,\prmod{\ell}}=\End_{\dtm(\F,\Z_{\langle\ell\rangle})}(\Z_{\langle\ell\rangle})=\Z_{\langle\ell\rangle}.
    \end{equation*}
    Again since the space $\spc(\dtm(\F,\Z_{\langle\ell\rangle}))$ is local, this is also the stalk of $\spc(\aet)_{*}\mathcal{O}_{\F}^{\et}$ at $\prmod{\ell}$. The morphism induced between these stalks is clearly an isomorphism which concludes the proof.
\end{proof}

\begin{cor}The stalks at the primes are
  \begin{align*}
    \mathcal{O}_{\F,\pret{\ell}}=\mathcal{O}_{\F,\prmod{\ell}}=\Z_{\langle\ell\rangle},&&\mathcal{O}_{\F,\prmod{0}}=\Q.
  \end{align*}
\end{cor}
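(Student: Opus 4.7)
The proposal is to deduce the corollary directly from \cref{spc-sheaf} together with the explicit description of the map $\spc(\aet)$ in \cref{etale-sheafification-homeomorphism} and of the étale structure sheaf $\mathcal{O}_{\F}^{\et}$ in \cref{spec-dtmet}. By \cref{spc-sheaf} it suffices to compute the stalks of $\spc(\aet)_{*}\mathcal{O}_{\F}^{\et}$, and by \cref{spec-dtmet} the latter sheaf on $\spc(\dtmet(\F,\Z[1/p]))$ is canonically the standard structure sheaf of $\spec(\Z[1/p])$.

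First I would treat the non-closed primes $\prmod{0}$ and $\pret{\ell}$ (for $\ell\neq p$). By \cref{etale-sheafification-homeomorphism}, these are precisely the points in the image of $\spc(\aet)$, and $\spc(\aet)$ is a homeomorphism onto that image. Consequently, for any such prime $\mathfrak{P}$, the preimages of arbitrarily small open neighborhoods of $\mathfrak{P}$ form a cofinal system of open neighborhoods of $\spc(\aet)^{-1}(\mathfrak{P})$ in $\spec(\Z[1/p])$, so $(\spc(\aet)_{*}\mathcal{O}_{\F}^{\et})_{\mathfrak{P}}=(\mathcal{O}_{\F}^{\et})_{\spc(\aet)^{-1}(\mathfrak{P})}$. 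Reading off the two cases from the identification $\spc(\dtmet(\F,\Z[1/p]))=\spc(\Z[1/p])$ gives $\mathcal{O}_{\F,\prmod{0}}=\Q$ and $\mathcal{O}_{\F,\pret{\ell}}=\Z_{\langle\ell\rangle}$.

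For the closed primes $\prmod{\ell}$ with $\ell\neq p$, the computation is exactly the one already carried out inside the proof of \cref{spc-sheaf}: localize the central ring away from $\ell$ to pass to $\dtm(\F,\Z_{\langle\ell\rangle})$ (using \cref{spec-localization-cartesian-explicit}). Inside this category the point $\prmod{\ell}$ becomes the zero ideal of a local tt-category (see \cref{cyclotomic-field} for the analogous observation on the étale side), so the stalk at this point coincides with the global sections, which are $\End_{\dtm(\F,\Z_{\langle\ell\rangle})}(\Z_{\langle\ell\rangle})=\Z_{\langle\ell\rangle}$.

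I do not anticipate any serious obstacle: the only step that requires a moment's care is confirming that for points in the image of the homeomorphic embedding $\spc(\aet)$ the pushforward stalk really does agree with the stalk of $\mathcal{O}_{\F}^{\et}$ at the unique preimage, which is a general feature of pushforward along a homeomorphism onto a subspace once one observes that neighborhoods restrict to cofinal systems of neighborhoods. All the substantive work has already been done in establishing \cref{spc-sheaf,etale-sheafification-homeomorphism,spec-dtmet}.
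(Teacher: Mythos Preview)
Your proposal is correct and is exactly the argument the paper intends: the corollary is stated without proof because the stalk at $\prmod{\ell}$ is already computed verbatim inside the proof of \cref{spc-sheaf}, and the remaining two stalks follow by combining \cref{spc-sheaf} with the identification $\mathcal{O}_{\F}^{\et}\cong\mathcal{O}_{\spec(\Z[1/p])}$ from \cref{spec-dtmet} and the fact (\cref{etale-sheafification-homeomorphism}) that $\spc(\aet)$ is a homeomorphism onto its image. Your check that the pushforward stalk at a point in the image of a topological embedding agrees with the stalk at its unique preimage is the only small detail to add, and it is correct.
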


\begin{cor}
  The locally ringed space $\spec(\dtm(\F,\Z[1/p]))$ is not a scheme.
\end{cor}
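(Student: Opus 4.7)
The plan is to derive a contradiction from the existence of a chain of specializations $\prrat\rightsquigarrow\pret{\ell}\rightsquigarrow\prmod{\ell}$ of length two ending at a point whose stalk is a Krull dimension one ring. The only inputs required are \cref{dtm-primes} and the immediately preceding corollary computing the stalks.

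Fix any prime $\ell\neq p$. I would first record the elementary point-set observation that if $x\rightsquigarrow y$ and $y$ belongs to an open set $V$, then $x\in V$ as well: since $y\in\overline{\{x\}}\cap V$ is nonempty, so is $\{x\}\cap V$. Applied to the above chain from \cref{dtm-primes}, this means that every open neighborhood of $\prmod{\ell}$ automatically contains both $\prrat$ and $\pret{\ell}$.

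Now suppose, for contradiction, that $X:=\spec(\dtm(\F,\Z[1/p]))$ is a scheme, and choose an affine open neighborhood $U=\spec(A)$ of $\prmod{\ell}$. By the previous paragraph, $\prrat,\pret{\ell}\in U$ as well, so they correspond to a strict chain of prime ideals $\mathfrak{p}_0\subsetneq\mathfrak{p}_1\subsetneq\mathfrak{p}_2$ of $A$, with $\mathfrak{p}_2$ the prime of $A$ corresponding to $\prmod{\ell}$. Localizing at $\mathfrak{p}_2$ preserves this strict chain, so the Krull dimension of $\mathcal{O}_{X,\prmod{\ell}}=A_{\mathfrak{p}_2}$ is at least two. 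However, the preceding corollary identifies $\mathcal{O}_{X,\prmod{\ell}}$ with the discrete valuation ring $\Z_{\langle\ell\rangle}$, which has Krull dimension one. This is the desired contradiction.

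The argument is essentially a one-liner given the explicit description of the spectrum and its stalks, so there is no real obstacle. The conceptual content is that the intermediate generization $\pret{\ell}$ forces any putative affine chart around $\prmod{\ell}$ to have at least three prime ideals along a single specialization chain, whereas the computed stalk $\Z_{\langle\ell\rangle}$ only admits chains of length one.
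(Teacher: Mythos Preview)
Your proof is correct, but it differs from the paper's argument. The paper argues via the fiber-product machinery of \cref{spec-localization-cartesian}: if $\spec(\dtm(\F,\Z[1/p]))$ were a scheme, then so would be its central localization $\spec(\dtm(\F,\Z_{\langle\ell\rangle}))$; the latter is a local space, hence would be the affine spectrum of its unique local ring $\Z_{\langle\ell\rangle}$, which has two points rather than three. Your argument bypasses the appendix entirely and works directly on the big space: any affine open around $\prmod{\ell}$ must contain the full specialization chain $\prrat\rightsquigarrow\pret{\ell}\rightsquigarrow\prmod{\ell}$, forcing the stalk at $\prmod{\ell}$ to have Krull dimension at least two, contradicting the computed stalk $\Z_{\langle\ell\rangle}$. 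Your route is more elementary and self-contained, needing only \cref{dtm-primes} and the stalk computation; the paper's route illustrates how the localization lemmas in the appendix can be leveraged, at the cost of invoking that machinery.
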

\begin{proof}
  The canonical functor from schemes to locally ringed spaces preserves fiber products. Localizing at $S=\Z\backslash\langle\ell\rangle$, it would follow from \cref{spec-localization-cartesian}, that $\spec(\dtm(\F,\Z_{\langle\ell\rangle}))$ is a scheme as well. Since it is local it would have to be affine, the spectrum of $\Z_{\langle\ell\rangle}$. But the latter has two, not three, points.
\end{proof}

\appendix{}
\section{Some remarks on Balmer's structure sheaf}
\label{sec:tt-sheaf}
Balmer in~\cite{balmer:spectrum} (see also~\cite{balmer:sss}) associates to every (small) rigid tt-category $\T$ a locally ringed space $\spec(\T)$, its tt-spectrum. It is fairly obvious that this actually extends canonically to a functor satisfying certain good properties. Since we have not seen this explained in the literature and since we need it in the body of the text, let us spell out the details here. For this section only, we assume that all tt-categories are rigid.
\begin{lem}\label{spec-functorial}
  Balmer's construction canonically extends to a contravariant functor $\spec:\rigttCat\to\Lrs$ from the category of rigid tt-categories to the category of locally ringed spaces.
\end{lem}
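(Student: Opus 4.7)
The plan is to take Balmer's object-level construction of $\spec(\T)$ as a given and check that tt-functors extend naturally to morphisms of locally ringed spaces, with functoriality being essentially automatic because every ingredient in Balmer's construction is itself functorial. Throughout, I would work with small rigid tt-categories, so that the idempotent completions and Verdier quotients behave as expected and Balmer's description of $\mathcal{O}_{\T}$ and its stalks applies.

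First, I would recall the underlying continuous map. For a tt-functor $F\colon\T\to\T'$, the assignment $\spc(F)\colon\mathfrak{P}\mapsto F^{-1}(\mathfrak{P})$ is a well-defined spectral map $\spc(\T')\to\spc(\T)$ satisfying the key compatibility $\spc(F)^{-1}(\supp_{\T}(a))=\supp_{\T'}(F(a))$ for every $a\in\T$; this is already in~\cite{balmer:spectrum}. Note the contravariance: $\spec$ becomes a functor $\rigttCat\to\Lrs$, with the arrows reversed.

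Second, I would construct the sheaf map. Recall that $\mathcal{O}_{\T}$ is the sheafification of the presheaf sending a quasi-compact open $U=\spc(\T)\setminus Z$ (with $Z$ Thomason) to $\End_{(\T/\T_{Z})^{\natural}}(\one)$, where $\T_{Z}\subset\T$ is the tt-ideal of objects supported on $Z$. The support formula above implies $F(\T_{Z})\subset\T'_{\spc(F)^{-1}(Z)}$, so $F$ descends to an exact tensor functor between Verdier quotients and then between their idempotent completions; taking endomorphisms of the unit produces a ring map
\begin{equation*}
\mathcal{O}_{\T}(U)=\End_{(\T/\T_{Z})^{\natural}}(\one)\longrightarrow\End_{(\T'/\T'_{\spc(F)^{-1}(Z)})^{\natural}}(\one)=\mathcal{O}_{\T'}(\spc(F)^{-1}(U)).
\end{equation*}
Compatibility with restriction maps is immediate from the universal property of Verdier quotients, so this defines a morphism of presheaves, and after sheafification a morphism $\mathcal{O}_{\T}\to\spc(F)_{*}\mathcal{O}_{\T'}$. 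Together with $\spc(F)$ this gives a morphism of ringed spaces $\spec(F)\colon\spec(\T')\to\spec(\T)$. Functoriality in $F$ (preservation of identities and composition) is then just a matter of tracing through: every building block---preimage of primes, induced functor on Verdier quotients, idempotent completion, endomorphism ring of the unit, pushforward of sheaves, sheafification---is itself functorial.

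Finally, I would verify that $\spec(F)$ is a morphism of \emph{locally} ringed spaces, which is the only nontrivial content beyond the ringed-space construction. Here I would invoke Balmer's identification of the stalk $\mathcal{O}_{\T,\mathfrak{P}}$ as a local ring whose maximal ideal consists of germs represented by morphisms $s\colon\one\to\one$ with $\cone(s)\notin\mathfrak{P}$. Given $\mathfrak{Q}\in\spc(\T')$ and $\mathfrak{P}=F^{-1}(\mathfrak{Q})$, the induced stalk map $\mathcal{O}_{\T,\mathfrak{P}}\to\mathcal{O}_{\T',\mathfrak{Q}}$ sends the germ of $s$ to the germ of $F(s)$; since $F$ commutes with cones, $\cone(s)\notin\mathfrak{P}=F^{-1}(\mathfrak{Q})$ is equivalent to $\cone(F(s))=F(\cone(s))\notin\mathfrak{Q}$, so the maximal ideal pulls back to the maximal ideal. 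The main obstacle, if any, is purely bookkeeping: making sure the sheafification step is compatible with $\spc(F)_{*}$ and that the stalk description of $\mathcal{O}_{\T}$ is preserved under the construction. But these are direct verifications using that $\spc(F)$ is spectral and that quasi-compact opens form a basis, so the proof reduces to the three steps above.
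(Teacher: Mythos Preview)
Your proposal is correct and follows essentially the same architecture as the paper: construct the sheaf morphism by noting that $F$ descends to Verdier quotients, hence induces ring maps on $\End(\one)$, sheafify, then check locality on stalks. The only stylistic difference is that the paper works over the base of opens $U(a)$ and phrases the locality argument via conservativity of $F\colon\T/\mathfrak{P}\to\T'/\mathfrak{Q}$ (which immediately implies that units in $\End(\one)$ are detected), whereas you work over general quasi-compact opens and unpack locality explicitly via the cone description of the maximal ideal; these are equivalent.
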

\begin{proof}
  That $\spc$ is a contravariant functor $\rigttCat\to\Tpl$ is proved in~\cite[3.6]{balmer:spectrum} so that we need only consider the structure sheaves. Recall that these are defined as the sheafification of a presheaf $_{p}{\mathcal O}$ on the canonical base for the topology, whose definition we now recall. Let $\T$ be a tt-category, and $a\in \T$. Sections of $_{p}{\mathcal O_{\T}}$ over $U(a)$ are given by endomorphisms of the unit in $\T/\langle a\rangle$. Given an inclusion $U(b)\subset U(a)$, we have $\langle a\rangle\subset\langle b\rangle$ from which a functor $\T/\langle a\rangle\to\T/\langle b\rangle$ and then an induced morphism of rings.

We note that the association $U(a)\mapsto \T/\langle a\rangle$ can be made into a functor $\T/-$ with values in $\rigttCat$. We can then compose with the functor $\R_{-}=\End_{-}(\one):\rigttCat\to \Rng$ to the category of rings, and this defines the presheaf $_{p}{\mathcal O}_{\T}$ on the distinguished base for the topology.

Given a tt-functor $F:\T\to\T'$, denote its induced continuous map $\spc(F)$ by~$f$. We have $f^{-1}(U(a))=U(Fa)$ and $F:\T/\langle a\rangle\to\T'/\langle Fa\rangle$ thus a natural transformation $F/-:\T/-\to\T'/F(-)$. Whiskering with $\End_{-}(\one)$ we obtain a morphism of presheaves of rings
  \begin{equation*}
    _{p}{\mathcal O}_{\T}\to f_{*}{}_{p}{\mathcal O}_{\T'}.
  \end{equation*}
After sheafifying we clearly obtain a functor $\rigttCatop\to \Rs$, the category of ringed spaces. The objects are sent to locally ringed spaces, by~\cite[6.6]{balmer:sss}, and it remains to check that the morphisms are local. This can be checked on the level of presheaves. Fix a prime $\mathfrak{P}\in\spec(\T')$ and let $\mathfrak{Q}=f(\mathfrak{P})=F^{-1}(\mathfrak{P})$. Using~\cite[6.5]{balmer:sss} we see that the morphism on stalks at these two points naturally identifies with the morphism $\R_{\T/\mathfrak{Q}}\to\R_{\T'/\mathfrak{P}}$ induced by $F:\T/\mathfrak{Q}\to\T'/\mathfrak{P}$. But this functor is conservative by definition, \ie detects isomorphisms, in particular automorphisms of $\one$ hence $\R_{\T/\mathfrak{Q}}\to\R_{\T'/\mathfrak{P}}$ is local.
\end{proof}

\begin{lem}\label{comparison-natural}
  The comparison morphism $\rho:\spec(\T)\to\spec(\R_{\T})$ defines a natural transformation of functors $\rigttCatop\to\Lrs$.
\end{lem}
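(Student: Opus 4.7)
The plan is to unwind the definition of $\rho$ and check naturality against an arbitrary tt-functor $F\colon\T\to\T'$ in two stages: first on underlying topological spaces, and then on structure sheaves, using the constructions already assembled in the proof of \cref{spec-functorial}. Writing $F^{*}\colon\R_{\T}\to\R_{\T'}$ for the ring map obtained by restricting $F$ to $\End(\one)$, what must be verified is the commutativity in $\Lrs$ of the square
\begin{equation*}
\xymatrix{\spec(\T')\ar[r]^{\rho_{\T'}}\ar[d]_{\spec(F)}&\spec(\R_{\T'})\ar[d]^{\spec(F^{*})}\\
\spec(\T)\ar[r]_{\rho_{\T}}&\spec(\R_{\T}).}
\end{equation*}

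On underlying topological spaces this is a direct computation from the explicit formula for $\rho$ recalled in \cref{sec:conventions}. Given $\mathfrak{P}\in\spc(\T')$, one has
\begin{equation*}
\rho_{\T}\bigl(\spc(F)(\mathfrak{P})\bigr)=\{r\in\R_{\T}\mid F(\cone(r))\notin\mathfrak{P}\},
\end{equation*}
while $\spc(F^{*})\bigl(\rho_{\T'}(\mathfrak{P})\bigr)=\{r\in\R_{\T}\mid\cone(F^{*}r)\notin\mathfrak{P}\}$. The two sets agree because $F$ is exact and monoidal, so $F(\cone(r))\cong\cone(F^{*}(r))$ for every central element $r\in\R_{\T}$.

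For the structure sheaf part it is cleanest to work on the canonical base $\{U(a)\mid a\in\T\}$, where the presheaf ${}_{p}\mathcal{O}_{\T}$ takes value $\R_{\T/\langle a\rangle}$, and on the principal opens of $\spec(\R_{\T})$. The comparison morphism $\rho_{\T}^{\sharp}$ is induced, after sheafification, by the collection of canonical ring maps $(\R_{\T})_{s}\to\R_{\T/\langle\cone(s)\rangle}$ associated to elements $s\in\R_{\T}$; see \cite[6.5]{balmer:sss}. These maps are manifestly natural in $\T$: for $F\colon\T\to\T'$ one invokes the natural transformation $\T/\langle-\rangle\to\T'/\langle F(-)\rangle$ of \cref{spec-functorial} and whiskers with $\R_{-}=\End_{-}(\one)$, obtaining a commutative square of presheaves of rings that sheafifies to the required commutative square of structure sheaves. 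Locality of the resulting morphism in $\Lrs$ is automatic, since we have already seen in \cref{spec-functorial} that $\rho_{\T}$ and $\rho_{\T'}$ are themselves morphisms of locally ringed spaces.

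I expect the only real obstacle to be bookkeeping rather than mathematical content: none of the individual steps is substantive, but one must track the conventions carefully (contravariance of $U(a)\mapsto\T/\langle a\rangle$, commutativity of localization with sheafification on principal opens, and compatibility of $F^{*}\colon\R_{\T}\to\R_{\T'}$ with the Verdier quotients $\T/\langle a\rangle\to\T'/\langle Fa\rangle$) to ensure that the presheaf-level naturality squares really do commute before sheafifying.
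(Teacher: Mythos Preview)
Your proposal is correct and follows essentially the same approach as the paper: both check naturality first on underlying spaces (you by direct computation, the paper by citing \cite[5.3]{balmer:sss}) and then on structure sheaves at the presheaf level over principal opens, invoking the natural transformation $\T/\langle-\rangle\to\T'/\langle F(-)\rangle$ from \cref{spec-functorial} whiskered with $\R_{-}$. One small slip: the fact that $\rho_{\T}$ is a morphism of \emph{locally} ringed spaces is not established in \cref{spec-functorial} (which treats $\spec(F)$) but rather in \cite[6.11]{balmer:sss}, as the paper cites.
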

\begin{proof}
  Naturality on the level of topological spaces is~\cite[5.3]{balmer:sss}. Also, $\rho$ is a morphism of locally ringed spaces, by~\cite[6.11]{balmer:sss}. It remains to check naturality on the level of sheaves, or indeed, presheaves. In other words, for $F:\T\to\T'$ we need to show commutativity of the square
  \begin{equation*}
    \xymatrix{_{p}{\cal O}_{\T'}(U(\cone(Fr)))&_{p}{\cal O}_{\T}(U(\cone(r)))\ar[l]_-{F}\\
_{p}{\cal O}_{\R_{\T'}}(D(Fr))\ar[u]&_{p}{\cal O}_{\R_{\T}}(D(r))\ar[u]\ar[l]^-{F}}
  \end{equation*}
  where by definition~\cite[6.10]{balmer:sss} the vertical arrows are isomorphisms, identifying both rings with $\R_{\T'}[1/Fr]$, respectively $\R_{\T}[1/r]$. By the universal property of localization at the level of rings, it suffices to prove that the diagram commutes on the image of $\R_{\T}\to {}_{p}{\cal O}_{\R_{\T}}(D(r))$. But for $s\in\R_{\T}=\End_{\T}(\one)$, the image under both possible paths traversing the square is simply $Fs$ considered as an endomorphism of $\one\in \T'/\langle\cone(Fr)\rangle$.
\end{proof}

\begin{lem}\label{spec-localization-cartesian}
  Let $\T$ be a tt-category, and $S\subset \R_{\T}$ a multiplicative system. Then the following square is cartesian in $\Lrs$:
  \begin{equation*}
    \xymatrix{\spec(S^{-1}\T)\ar@{^{(}->}[r]^{\spc(Q)}\ar[d]_{\rho_{S^{-1}\T}}&\spec(\T)\ar[d]^{\rho_{\T}}\\
      \spec(\R_{S^{-1}\T})=\spec(S^{-1}\R_{\T})\ar@{^{(}->}[r]&\spec(\R_{\T})}
  \end{equation*}
(Here $Q$ denotes the canonical localization functor $Q:\T\to S^{-1}\T$.)
\end{lem}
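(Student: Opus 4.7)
The plan is to bootstrap from Balmer's \cite[5.6]{balmer:sss}, which gives the cartesian statement on underlying topological spaces, to the cartesian statement in $\Lrs$. Commutativity of the square in $\Lrs$ is provided by the naturality of $\rho$ (\cref{comparison-natural}), so the remaining work is entirely at the level of structure sheaves.

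First I would observe that $g:\spec(S^{-1}\R_{\T})\hookrightarrow\spec(\R_{\T})$ is a flat monomorphism of affine schemes, exhibited as the intersection of the basic opens $D(s)$ for $s\in S$. For such a $g$, pullback in $\Lrs$ along $g$ is computed by taking the topological preimage and restricting the structure sheaf. Combining this with the known topological fact, it suffices to show that the canonical morphism $\spc(Q)^{-1}\mathcal{O}_{\spec(\T)}\to\mathcal{O}_{\spec(S^{-1}\T)}$ is an isomorphism of sheaves of rings, and this can be checked on stalks.

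Next I would compute the stalks directly from the construction of Balmer's presheaf $_{p}\mathcal{O}$ recalled in \cref{spec-functorial}. Since $Q:\T\to S^{-1}\T$ is a Verdier quotient, it is essentially surjective, so every basic open in $\spc(S^{-1}\T)$ has the form $U(Qa_{0})=\spc(Q)^{-1}U(a_{0})$ for some $a_{0}\in\T$. The universal property of Verdier quotients yields a canonical tt-equivalence $(S^{-1}\T)/\langle Qa_{0}\rangle\simeq S^{-1}(\T/\langle a_{0}\rangle)$, and hence on sections
\begin{equation*}
{}_{p}\mathcal{O}_{S^{-1}\T}(U(Qa_{0}))=\R_{S^{-1}(\T/\langle a_{0}\rangle)}=S^{-1}\R_{\T/\langle a_{0}\rangle}=S^{-1}\,{}_{p}\mathcal{O}_{\T}(U(a_{0})),
\end{equation*}
where the middle equality is again \cite[5.6]{balmer:sss} applied to $\T/\langle a_{0}\rangle$. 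Taking the filtered colimit over basic opens containing a point $\mathfrak{P}\in\spc(S^{-1}\T)$ and using that localization commutes with filtered colimits, the stalk of $\mathcal{O}_{\spec(S^{-1}\T)}$ at $\mathfrak{P}$ identifies with $S^{-1}\mathcal{O}_{\spec(\T),\spc(Q)(\mathfrak{P})}$.

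To conclude I would invoke naturality of $\rho$: we have $\rho_{\T}(\spc(Q)(\mathfrak{P}))=g(\rho_{S^{-1}\T}(\mathfrak{P}))$, so this prime is disjoint from $S$, which means each $s\in S$ lands outside the maximal ideal of the local ring $\mathcal{O}_{\spec(\T),\spc(Q)(\mathfrak{P})}$ and is therefore already a unit there. Consequently the localization $S^{-1}(-)$ is trivial on this stalk and the comparison map is an isomorphism. Since the maps involved are morphisms of local rings (as both $\rho$ and $\spc(Q)$ define morphisms in $\Lrs$), the pullback square is cartesian in $\Lrs$. The main point requiring attention is the compatibility $(S^{-1}\T)/\langle Qa_{0}\rangle\simeq S^{-1}(\T/\langle a_{0}\rangle)$; I would verify this carefully by juggling the universal properties of central tt-localization and of Verdier quotients, including their symmetric monoidal refinements.
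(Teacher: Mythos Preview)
Your proposal is correct and essentially parallel to the paper's own argument: both use \cite[5.6]{balmer:sss} for the topological statement, \cref{comparison-natural} for commutativity in $\Lrs$, the observation that the fiber product along the open immersion $\spec(S^{-1}\R_{\T})\hookrightarrow\spec(\R_{\T})$ is computed by restriction of the structure sheaf, and then a stalk computation. The only difference is that the paper short-circuits your section-by-section calculation by invoking Balmer's stalk formula $\mathcal{O}_{\T,\mathfrak{P}}=\R_{\T/\mathfrak{P}}$ \cite[6.5]{balmer:sss} directly, yielding in one line $\mathcal{O}_{S^{-1}\T,S^{-1}\mathfrak{P}}=\R_{S^{-1}\T/S^{-1}\mathfrak{P}}=\R_{\T/\mathfrak{P}}=\mathcal{O}_{\T,\mathfrak{P}}$; this uses the same compatibility $(S^{-1}\T)/\langle Q(-)\rangle\simeq S^{-1}(\T/\langle-\rangle)$ that you flag, applied at the prime itself, together with the fact that $S$ already acts by isomorphisms in $\T/\mathfrak{P}$ when $\rho_{\T}(\mathfrak{P})\cap S=\emptyset$. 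One minor point: your ``middle equality'' $\R_{S^{-1}(\T/\langle a_0\rangle)}=S^{-1}\R_{\T/\langle a_0\rangle}$ is not \cite[5.6]{balmer:sss} but the more elementary fact that central localization localizes the hom-sets (this is \cite[3.6]{balmer:sss}).
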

Before giving the proof let us recall that for this type of diagram (where the bottom horizontal map is an isomorphism on stalks) the fiber product in the category $\Lrs$ is simple to describe: it coincides with the fiber product in the category $\Rs$~\cite[Cor. 11]{gillam:localization-ringed-spaces}. So, this result can be made more explicit as follows.
\begin{cor}\label{spec-localization-cartesian-explicit}
  In the situation of \cref{spec-localization-cartesian}, $\spec(S^{-1}\T)$ maps homeomorphically onto $\{\mathfrak{P}\in\spec(\T)\mid\rho_{\T}(\mathfrak{P})\cap S=\emptyset\}$, and its structure sheaf identifies with the restriction of ${\cal O}_{\T}$ to this subset.
\end{cor}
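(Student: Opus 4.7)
The plan is to read the explicit description straight off the cartesian square in \cref{spec-localization-cartesian}, combined with the Gillam observation quoted just above it: the bottom arrow $\iota:\spec(S^{-1}\R_{\T})\to\spec(\R_{\T})$ is induced by a ring localization, hence is an isomorphism on every stalk, so the fibre product in $\Lrs$ already coincides with the one in $\Rs$. Once that is in hand, the remaining task is to describe this ringed-space pullback explicitly.

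On underlying spaces, $\spc(\iota)$ is the familiar topological embedding of the localization: a homeomorphism from $\spch(S^{-1}\R_{\T})$ onto the subspace $\{\mathfrak{p}\mid\mathfrak{p}\cap S=\emptyset\}\subset\spch(\R_{\T})$. Pulling this back along $\rho_{\T}$ shows that the topological fibre product is the preimage $W:=\{\mathfrak{P}\in\spc(\T)\mid\rho_{\T}(\mathfrak{P})\cap S=\emptyset\}$ endowed with the subspace topology inherited from $\spc(\T)$. Cartesianness of the square then forces $\spc(Q)$ to be a homeomorphism onto $W$.

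For the structure sheaf, let $j:W\hookrightarrow\spc(\T)$ denote the inclusion. At any point $\mathfrak{P}\in W$ with image $\mathfrak{p}=\rho_{\T}(\mathfrak{P})$, the arrow $\iota$ identifies the stalks $\mathcal{O}_{\spec(\R_{\T}),\mathfrak{p}}=(\R_{\T})_{\mathfrak{p}}$ and $\mathcal{O}_{\spec(S^{-1}\R_{\T}),S^{-1}\mathfrak{p}}=(S^{-1}\R_{\T})_{S^{-1}\mathfrak{p}}$, so in the ringed-space pullback the stalk of the structure sheaf at $\mathfrak{P}$ is simply $\mathcal{O}_{\T,\mathfrak{P}}$, and the canonical map from the pullback sheaf to $j^{-1}\mathcal{O}_{\T}$ is a stalkwise (hence genuine) isomorphism. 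The one point that requires care is precisely this passage from a pointwise to a global statement on sheaves, but it is just the content of Gillam's cited corollary specialised to our square, so nothing beyond a second invocation of that result is needed. We conclude $\mathcal{O}_{\spec(S^{-1}\T)}=\mathcal{O}_{\T}|_W$, as claimed.
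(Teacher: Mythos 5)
Your proposal is correct and follows essentially the same route as the paper: the paper's own proof opens by observing that, thanks to Gillam's result on fibre products in $\Lrs$ over a stalk-isomorphic base map, \cref{spec-localization-cartesian} and \cref{spec-localization-cartesian-explicit} are equivalent, and your argument is exactly this deduction (identify the $\Lrs$ pullback with the $\Rs$ pullback, read off the subspace $W$ topologically, and check on stalks that the structure sheaf is $\mathcal{O}_{\T}|_{W}$), matching the paper's stalk computation $\mathcal{O}_{S^{-1}\T,S^{-1}\mathfrak{P}}=\R_{\T/\mathfrak{P}}=\mathcal{O}_{\T,\mathfrak{P}}$. The only difference is one of packaging — the paper proves the lemma and corollary in a single combined argument, whereas you take the lemma as given, which is legitimate since the corollary is stated ``in the situation of'' that lemma.
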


\begin{proof}
  By the remarks just made, \cref{spec-localization-cartesian} and \cref{spec-localization-cartesian-explicit} are equivalent.  Moreover, \cite[5.6]{balmer:sss} shows that the square is cartesian on the level of sets. Both $S^{-1}\T$ and $\T$ have the ``same'' base for the topology, namely $U(a)$ where $a\in\T$, and we see that the diagram is cartesian on the level of topological spaces as well.

The square is commutative in $\Lrs$, by \cref{comparison-natural}. Consequently we obtain a canonical morphism of locally ringed spaces $\spec(S^{-1}\T)\to X$ where $X$ is the fiber product in $\Lrs$. To show that it is an isomorphism we check that it is so on stalks. If $\mathfrak{P}\in\spec(\T)$ such that $\rho_{\T}(\mathfrak{P})\cap S=\emptyset$, and $S^{-1}\mathfrak{P}$ is the corresponding prime in $S^{-1}\T$, then the stalk of ${\cal O}_{S^{-1}\T}$ at $S^{-1}\mathfrak{P}$ is
\begin{equation*}
  {\cal O}_{S^{-1}\T,S^{-1}\mathfrak{P}}=\R_{S^{-1}\T/S^{-1}\mathfrak{P}}=\R_{\T/\mathfrak{P}}={\cal O}_{\T,\mathfrak{P}},
\end{equation*}
and we conclude.
\end{proof}

\begin{rmk}\label{spec-localization-sheaf}
  The last argument in this proof also shows that for any Verdier localization $\mathcal{T}\to\mathcal{T}/\mathcal{K}$ the induced map on spectra (which is a homeomorphism onto its image)
  \begin{equation*}
    \spc(\mathcal{T}/\mathcal{K})\to\spc(\mathcal{T})
  \end{equation*}
  identifies the structure sheaf on the domain with the restriction of the structure sheaf on the codomain.
\end{rmk}
\section{The motivic Bott element and change of coefficients}
\label{sec:bott-coefficients}

In this section we will perform some computations regarding how the Bott elements behave under certain changes of coefficients. Our main goal is \cref{bott-equality} which states that inverting a Bott element of any prime power order is equally good. We fix a field $\F$, a localization $\Z\subset R\subset\Q$, and two integers $1 < n,N$ such that $n\mid N$ and $N$ is invertible in $\F$. We also assume that $\F$ contains a primitive $N$th root of unity $\zeta_{N}$, and we let $\zeta_{n}=\zeta_{N}^{N/n}$, a primitive $n$th root of unity. Finally, we use a subscript $(-)_{R}$ to denote tensoring with $R$.

For any positive integer $k$, the triangle
\begin{equation}
R\xrightarrow{k}R\to R/k\to R[1]\label{eq:bockstein}
\tag{Bk}\end{equation}
gives rise to a long exact sequence
\begin{multline*}
  \to\hom_{\dm(\F,R)}(R,R(1))\to\hom_{\dm(\F,R)}(R,R/k(1))\to\\\hom_{\dm(\F,R)}(R,R(1)[1])\xrightarrow{k}\hom_{\dm(\F,R)}(R,R(1)[1])\to
\end{multline*}
which identifies with
\begin{multline*}
  0\to\hom_{\dm(\F,R)}(R,R/k(1))\to\F^{\times}_{R}\xrightarrow{k}\F^{\times}_{R}\to,
\end{multline*}
and we see that $\hom_{\dm(\F,R)}(R,R/k(1))=\mu_{k}(\F)_{R}$. By adjunction, also
\begin{equation*}
  \hom_{\dm(\F,R/k)}(R/k,R/k(1))=\mu_{k}(\F)_{R}.
\end{equation*}
We can therefore interpret the $N$th root of unity $\zeta_{N}$ as a morphism $\beta_{N}:R/N\to R/N(1)$ in $\dm(\F,R/N)$. This is called the motivic Bott element (with $R/N$-coefficients). Similarly we obtain $\beta_{n}:R/n\to R/n(1)$ in $\dm(\F,R/n)$.

Fix the following notation
\begin{equation*}
  \xymatrix{R\ar[r]_-{\Gamma}\ar@/^1pc/[rr]^{\gamma}&R/N\ar[r]_{\pi}&R/n}
\end{equation*}
with associated change of coefficients adjunctions $\Gamma^{*}\dashv\Gamma_{*}$, $\gamma^{*}\dashv\gamma_{*}$, $\pi^{*}\dashv\pi_{*}$.
\begin{lem}\label{compatibility-bott}
  The following square in $\dm(\F,R)$ commutes:
  \begin{equation*}
    \xymatrix{R/N\ar[r]^{\pi}\ar[d]_{\Gamma_{*}\beta_{N}}&R/n\ar[d]^{\gamma_{*}\beta_{n}}\\
      R/N(1)\ar[r]_{\pi}&R/n(1)}
  \end{equation*}
\end{lem}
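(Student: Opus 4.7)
The plan is to reduce commutativity of the square to a computation in the group $\hom_{\dm(\F,R)}(R,R/n(1))=\mu_{n}(\F)_{R}$. First, I would show that precomposition with $r_{N}\colon R\to R/N$ (the unit map of the Bockstein triangle) induces an injection
\[
r_{N}^{*}\colon\hom_{\dm(\F,R)}(R/N,R/n(1))\hookrightarrow\hom_{\dm(\F,R)}(R,R/n(1))=\mu_{n}(\F)_{R}.
\]
This follows by applying $\hom_{\dm(\F,R)}(-,R/n(1))$ to the triangle $R\xrightarrow{N}R\xrightarrow{r_{N}}R/N\xrightarrow{\delta_{N}}R[1]$ and observing that $\hom_{\dm(\F,R)}(R,R/n(1)[-1])=0$, as one sees from the corresponding long exact sequence for the Bockstein triangle of $n$ (using $\Hmm^{-1,1}(\F,R)=\Hmm^{0,1}(\F,R)=0$).

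Given this injection, the square of the lemma commutes if and only if its two compositions agree after precomposition with $r_{N}$. For the bottom-then-right path, I use the defining property $\pi\circ r_{N}=r_{n}$ to get $\gamma_{*}\beta_{n}\circ\pi\circ r_{N}=\gamma_{*}\beta_{n}\circ r_{n}$, which under the identification of $\hom(R,R/n(1))$ with $\mu_{n}(\F)_{R}$ corresponds to $\zeta_{n}$.

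The only real computation concerns the top-then-right path. Writing $N=kn$, the scheme of the proof produces a morphism of Bockstein triangles
\[
\xymatrix{R(1)\ar[r]^{N}\ar[d]_{k}&R(1)\ar[r]^{r_{N}}\ar[d]_{1}&R/N(1)\ar[r]^{\delta_{N}}\ar[d]_{\pi}&R(1)[1]\ar[d]^{k}\\
R(1)\ar[r]_{n}&R(1)\ar[r]_{r_{n}}&R/n(1)\ar[r]_{\delta_{n}}&R(1)[1]}
\]
(the square $\pi\circ r_{N}=r_{n}$ forces the left vertical map to be $k$, which then forces the right one). Applying $\hom_{\dm(\F,R)}(R,-)$ yields the commutative square identifying the induced map $\pi_{*}\colon\mu_{N}(\F)_{R}\to\mu_{n}(\F)_{R}$ with the restriction of the $k$-th power map on $\F^{\times}_{R}=\hom(R,R(1)[1])$. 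In particular $\pi_{*}(\zeta_{N})=\zeta_{N}^{k}=\zeta_{n}$, which is exactly the element corresponding to $\beta_{n}\circ r_{n}$.

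Combining the two computations shows that both paths equal $\zeta_{n}\in\mu_{n}(\F)_{R}$ after precomposing with $r_{N}$, and the injectivity from the first step then yields the desired equality. The main bookkeeping issue (not really an obstacle) is matching the additive language of Hom groups in triangulated categories with the multiplicative notation for $\mu_{N},\mu_{n}\subset\F^{\times}$; once this is done, the argument is essentially naturality of the Bockstein for the coefficient map $R/N\to R/n$.
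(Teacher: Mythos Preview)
Your proposal is correct and follows essentially the same route as the paper: reduce to checking equality after precomposing with $r_{N}\colon R\to R/N$ (using the injection coming from the Bockstein triangle for $N$), identify the right-then-down path with $\zeta_{n}$ via $\pi\circ r_{N}=r_{n}$, and identify the down-then-right path with $\zeta_{N}^{N/n}=\zeta_{n}$ via the morphism of Bockstein triangles. The only cosmetic differences are your explicit justification of $\hom(R,R/n(1)[-1])=0$ (which the paper leaves implicit) and the slightly awkward labels ``bottom-then-right''/``top-then-right'' for the two paths.
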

\begin{proof}
  Applying $\hom_{\dm(\F,R)}(-,R/n(1))$ to \hyperref[eq:bockstein]{(BN)} we obtain part of a long exact sequence
  \begin{equation*}
    0\to \hom_{\dm(\F,R)}(R/N,R/n(1))\to\hom_{\dm(\F,R)}(R,R/n(1))=\mu_{n}(\F)_{R}\xrightarrow{N=0}\mu_{n}(\F)_{R}.
  \end{equation*}
  In particular, it suffices to show commutativity of the square after precomposing with $\Gamma:R\to R/N$. It also shows that $\gamma_{*}\beta_{n}\circ\pi$ corresponds to $\zeta_{n}\in\mu_{n}(\F)_{R}$.

  Now, $\pi$ fits into a morphism of triangles
  \begin{equation*}
    \xymatrix{R\ar[r]^{N}\ar[d]_{N/n}&R\ar[r]\ar@{=}[d]&R/N\ar[r]\ar[d]_{\pi}&R[1]\ar[d]_{N/n}\\
R\ar[r]_{n}&R\ar[r]&R/n\ar[r]&R[1]}
  \end{equation*}
  and applying $\hom_{\dm(\F,R)}(R,-(1))$ we obtain
  \begin{equation*}
    \xymatrix{\mu_{N}(\F)_{R}\ar[d]_{\pi}\ar[r]&\F^{\times}_{R}\ar[d]^{N/n}\\
      \mu_{n}(\F)_{R}\ar[r]&\F^{\times}_{R}}
  \end{equation*}
  thus $\pi\circ\Gamma_{*}\beta_{N}$ corresponds to $\zeta_{N}^{N/n}=\zeta_{n}\in\mu_{n}(\F)_{R}$. This concludes the proof.
\end{proof}

\begin{lem}\label{compatibility-bott-2}
  Let $m=N/n$. The following square in $\dm(\F,R)$ commutes:
  \begin{equation*}
    \xymatrix{R/n\ar[r]^{m}\ar[d]_{\gamma_{*}\beta_{n}}&R/N\ar[d]^{\Gamma_{*}\beta_{N}}\\
      R/n(1)\ar[r]_{m}&R/N(1)}
  \end{equation*}
\end{lem}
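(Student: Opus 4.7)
The plan is to mirror the proof of \cref{compatibility-bott}. First, realize $m\colon R/n\to R/N$ as the third vertical arrow of a morphism of Bockstein triangles from $(\mathrm{B}n)$ to $(\mathrm{B}N)$, with identity on the leftmost $R$ and multiplication by $m$ on the middle $R$ (this is consistent since $mn=N$). Applying $\hom_{\dm(\F,R)}(-,R/N(1))$ to $(\mathrm{B}n)$ produces
\[
\hom(R[1],R/N(1))\to\hom(R/n,R/N(1))\to\mu_N(\F)_R\xrightarrow{n}\mu_N(\F)_R,
\]
and the leftmost group vanishes: from $R(1)\simeq\mathcal{O}^{\times}[-1]$ one has $\Hmm^{-1,1}(\F,R)=\Hmm^{0,1}(\F,R)=0$, which via the Bockstein $(\mathrm{B}N)$ twisted by $R(1)[-1]$ forces $\hom_{\dm(\F,R)}(R,R/N(1)[-1])=0$. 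Hence $\hom(R/n,R/N(1))\hookrightarrow\mu_N(\F)_R$, and it suffices to verify the square after precomposing with $\gamma\colon R\to R/n$.

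For the composition through the upper-right corner, use $m\circ\gamma=\Gamma\circ m$ (from the morphism of triangles) to rewrite $\Gamma_{*}\beta_{N}\circ m\circ\gamma=(\Gamma_{*}\beta_{N}\circ\Gamma)\circ m$, and invoke the adjunction $\Gamma^{*}\dashv\Gamma_{*}$ to identify $\Gamma_{*}\beta_{N}\circ\Gamma$ with $\zeta_{N}\in\mu_{N}(\F)_{R}$; postcomposing with multiplication by $m$ on $R$ then yields $\zeta_{N}^{m}=\zeta_{n}$. For the composition through the lower-left corner, $m\circ\gamma_{*}\beta_{n}\circ\gamma=m\circ\zeta_{n}$; to compute this as an element of $\mu_{N}(\F)_{R}$, apply $\hom_{\dm(\F,R)}(R,-)$ to the morphism of triangles above twisted by $R(1)$. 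Because the rightmost vertical map in that twisted diagram is $\mathrm{id}_{R(1)[1]}$, the resulting ladder of long exact sequences identifies the induced map $\mu_{n}(\F)_{R}\to\mu_{N}(\F)_{R}$ with the canonical inclusion, so $m\circ\zeta_{n}=\zeta_{n}$ in $\mu_{N}(\F)_{R}$.

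Both compositions thus equal $\zeta_{n}\in\mu_{N}(\F)_{R}$ after precomposition with $\gamma$, completing the verification. The main obstacle is simply the careful bookkeeping of the Bockstein identifications $\hom_{\dm(\F,R)}(R,R/k(1))\cong\mu_{k}(\F)_{R}$ for $k\in\{n,N\}$ and checking that $m\colon R/n(1)\to R/N(1)$ corresponds to the natural inclusion $\mu_{n}\subset\mu_{N}$; no new input beyond the vanishing of $\Hmm^{\leq 0,1}(\F,R)$ is required.
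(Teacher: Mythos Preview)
Your proof is correct and follows essentially the same route as the paper: realize $m:R/n\to R/N$ via the morphism of Bockstein triangles $(\mathrm{B}n)\to(\mathrm{B}N)$, reduce to comparing the two compositions after precomposing with $R\to R/n$ using an injectivity argument, and identify both with $\zeta_n$ through the Bockstein identifications. The only cosmetic difference is that you embed $\hom(R/n,R/N(1))$ into $\mu_N(\F)_R=\hom(R,R/N(1))$ in a single step (and justify injectivity by the vanishing of $\Hmm^{\leq 0,1}$), whereas the paper takes the further step of pushing everything into $\hom(R,R(1)[1])=\F^\times_R$ and reads off both sides as $\zeta_n$ there; the underlying diagrams and identifications are the same.
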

\begin{proof}
  We have a morphism of triangles
  \begin{equation*}
    \xymatrix{R\ar@{=}[d]\ar[r]^{n}&R\ar[d]_{m}\ar[r]&R/n\ar[d]^{m}\ar[r]&R[1]\ar@{=}[d]\\
      R\ar[r]_{N}&R\ar[r]&R/N\ar[r]&R[1]}
  \end{equation*}
  which implies the commutativity of the bottom half of the following diagram:
  \begin{equation*}
    \xymatrix{\hom(R/n,R/n(1))\ar[r]^{m}\ar[d]&\hom(R/n,R/N(1))\ar[d]\\
      \hom(R,R/n(1))\ar[r]^{m}\ar[d]&\hom(R,R/N(1))\ar[d]\\
      \hom(R,R(1)[1])\ar@{=}[r]&\hom(R,R(1)[1])}
  \end{equation*}
  The upper half clearly commutes and the vertical arrows are injections. We deduce that $m\circ\gamma_{*}\beta_{n}$ corresponds to $\zeta_{n}\in \F^{\times}_{R}$.

  Next consider the following diagram:
  \begin{equation*}
    \xymatrix{\hom(R/N,R/N(1))\ar[r]^{m}\ar[d]&\hom(R/n,R/N(1))\ar[d]\\
      \hom(R,R/N(1))\ar[r]^{m}\ar[d]&\hom(R,R/N(1))\ar[d]\\
      \hom(R,R(1)[1])\ar[r]^{m}&\hom(R,R(1)[1])}
  \end{equation*}
  The commutativity of the upper half again follows from the morphism of triangles above, while the lower half clearly commutes. The vertical arrows are injections and we deduce that $\Gamma_{*}\beta_{N}\circ m$ corresponds to $\zeta_{N}^{m}=\zeta_{n}$ thus the claim.
\end{proof}

\begin{lem}\label{adjunction-faithful}
  Let $F\dashv G:\mathcal{C}\to\mathcal{D}$ be an adjunction. For any $c\in\mathcal{C}$ and $d\in\mathcal{D}$ the following map is injective:
  \begin{equation*}
    \hom_{\mathcal{D}}(Fc,d)\xrightarrow{G}\hom_{\mathcal{C}}(GFc,Gd).
  \end{equation*}
\end{lem}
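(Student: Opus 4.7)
The plan is to exploit the standard description of the adjunction isomorphism via the unit. Let $\eta \colon \id_{\mathcal{C}} \to GF$ denote the unit of the adjunction. Recall that the bijection
\[
\Phi \colon \hom_{\mathcal{D}}(Fc, d) \xrightarrow{\sim} \hom_{\mathcal{C}}(c, Gd)
\]
is given explicitly by $\Phi(f) = G(f) \circ \eta_c$.

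First I would observe that this factors as
\[
\hom_{\mathcal{D}}(Fc, d) \xrightarrow{G} \hom_{\mathcal{C}}(GFc, Gd) \xrightarrow{-\,\circ\,\eta_c} \hom_{\mathcal{C}}(c, Gd),
\]
where the first arrow is the map in the statement and the second is precomposition with $\eta_c$. Since the composite is the adjunction bijection $\Phi$, it is in particular injective, and therefore the first arrow $G$ must be injective as well. There is essentially no obstacle here: the argument is a one-line triangle-identity observation. Nothing in the statement requires $G$ to be faithful in general; what saves us is that we are only asking about homs out of objects of the form $Fc$, which is precisely the situation controlled by the unit.
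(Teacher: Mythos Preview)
Your proof is correct. The paper's argument is a minor variant: it identifies the target $\hom_{\mathcal{C}}(GFc,Gd)$ with $\hom_{\mathcal{D}}(FGFc,d)$ via adjunction, under which the map $G$ becomes precomposition with the counit $\epsilon_{Fc}:FGFc\to Fc$; the triangle identity $\epsilon_{Fc}\circ F\eta_c=\id_{Fc}$ shows this is a split epimorphism, so precomposition with it is injective. Your version is arguably more direct, since you factor the adjunction bijection itself through the map in question rather than transporting the map across another adjunction isomorphism, but both arguments boil down to the same triangle identity.
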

\begin{proof}
  By adjunction, the target is identified with $\hom_{\mathcal{D}}(FGFc,d)$ and under this identification, the map is induced by the counit $FGFc\to Fc$ which is a split epimorphism (the splitting is given by the unit of the adjunction). Thus the claim.
\end{proof}

\begin{lem}
  \label{bfb} For any $M\in\dm(\F,R)$ we have:
  \begin{equation*}
    \gamma^{*}\gamma_{*}\gamma^{*}M\cong \gamma^{*}M\oplus\gamma^{*}M[1]
  \end{equation*}
\end{lem}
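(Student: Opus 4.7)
The whole computation reduces to understanding $\gamma^{*}$ applied to the constant sheaf $R/n$ in $\dm(\F,R)$. The key observation is that $\gamma^{*}(R/n)\cong R/n\oplus R/n[1]$ in $\dm(\F,R/n)$. To see this, I would apply the tt-functor $\gamma^{*}:\dm(\F,R)\to\dm(\F,R/n)$ to the triangle \eqref{eq:bockstein} (with $k=n$). Since $\gamma^{*}$ is exact and sends $R$ to $R/n$, the result is a triangle
\begin{equation*}
  R/n\xrightarrow{n}R/n\to\gamma^{*}(R/n)\to R/n[1]
\end{equation*}
in $\dm(\F,R/n)$. Because this target category is $R/n$-linear, multiplication by $n$ on any of its objects is zero, so the above triangle splits, yielding the claimed decomposition.

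To conclude, I would combine three ingredients already recorded in \cref{sec:DTM}: that $\gamma_{*}\gamma^{*}M\cong M\otimes R/n$ (with $R/n$ meaning the constant sheaf in $\dm(\F,R)$); that $\gamma^{*}$ is symmetric monoidal; and that $\gamma^{*}R=R/n$ is the unit of $\dm(\F,R/n)$. These give
\begin{equation*}
  \gamma^{*}\gamma_{*}\gamma^{*}M\cong\gamma^{*}(M\otimes R/n)\cong\gamma^{*}M\otimes\gamma^{*}(R/n)\cong\gamma^{*}M\otimes(R/n\oplus R/n[1])\cong\gamma^{*}M\oplus\gamma^{*}M[1],
\end{equation*}
which is the content of the lemma.

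There is no serious obstacle in the argument; the entire proof rests on the single observation that $\gamma^{*}$ turns the Bockstein triangle into a split one because multiplication by $n$ vanishes in the target category. The statement is in effect an instance of the projection formula combined with this splitting.
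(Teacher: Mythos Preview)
Your proof is correct and is essentially the same argument as the paper's: both apply $\gamma^{*}$ to the Bockstein triangle and use that multiplication by $n$ vanishes in the $R/n$-linear target, forcing a splitting. The only difference is organizational---the paper tensors $M$ with the Bockstein triangle and then applies $\gamma^{*}$ in one step, whereas you first establish the splitting for the unit and then tensor with $\gamma^{*}M$ via monoidality; these are equivalent since $\gamma^{*}$ is a tensor functor.
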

\begin{proof}
  Tensoring $M$ with \hyperref[eq:bockstein]{(Bn)} and applying $\gamma^{*}$ we obtain a triangle
  \begin{equation*}
    \gamma^{*}M\xrightarrow{n}\gamma^{*}M\xrightarrow{\pi}\gamma^{*}\gamma_{*}\gamma^{*}M\to\gamma^{*}M[1],
  \end{equation*}
  and since the first map is zero (the category $\dm(\F,R/n)$ is $\Z/n$-linear), the triangle splits and the Lemma follows.
\end{proof}

\begin{lem}
  \label{fbf} For any $M\in\dm(\F,R/n)$ we have:
  \begin{equation*}
    \gamma_{*}\gamma^{*}\gamma_{*}M\cong \gamma_{*}M\oplus\gamma_{*}M[1]
  \end{equation*}
\end{lem}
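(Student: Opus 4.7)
The plan is to mimic the proof of \cref{bfb}, but starting from a triangle in $\dm(\F,R)$ rather than applying $\gamma^{*}$ to one. First I would tensor the Bockstein triangle \hyperref[eq:bockstein]{(Bn)} in $\dm(\F,R)$ with the object $\gamma_{*}M$, obtaining a triangle
\begin{equation*}
  \gamma_{*}M\xrightarrow{n}\gamma_{*}M\to\gamma_{*}M\otimes R/n\to\gamma_{*}M[1].
\end{equation*}
The middle term is canonically identified with $\gamma_{*}\gamma^{*}\gamma_{*}M$, using the projection formula recalled in \cref{sec:DTM}, namely $\gamma_{*}\gamma^{*}(-)=(-)\otimes R/n$.

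Next I would argue that the first map in this triangle is zero. This is because $\gamma_{*}$ is an $R$-linear functor out of the $R/n$-linear category $\dm(\F,R/n)$, so that $n\cdot\id_{\gamma_{*}M}=\gamma_{*}(n\cdot\id_{M})=\gamma_{*}(0)=0$. The triangle therefore splits, yielding the claimed isomorphism $\gamma_{*}\gamma^{*}\gamma_{*}M\cong\gamma_{*}M\oplus\gamma_{*}M[1]$.

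There is no real obstacle here: the argument is essentially the dual of \cref{bfb}, with the only ingredient being the vanishing of $n$ on any object in the image of $\gamma_{*}$, which is immediate from $R$-linearity of $\gamma_{*}$ together with the $R/n$-linearity of its source.
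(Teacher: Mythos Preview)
Your proof is correct and essentially identical to the paper's: both tensor $\gamma_{*}M$ with the Bockstein triangle \hyperref[eq:bockstein]{(Bn)}, identify the cofiber with $\gamma_{*}\gamma^{*}\gamma_{*}M$ via $\gamma_{*}\gamma^{*}(-)=(-)\otimes R/n$, and observe that $n\cdot\id_{\gamma_{*}M}=0$ since $M$ lives in an $R/n$-linear category, so the triangle splits. The only difference is that you spell out the identification of the middle term and the vanishing argument in more detail than the paper does.
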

\begin{proof}
  Tensoring $\gamma_{*}M$ with \hyperref[eq:bockstein]{(Bn)} we obtain a triangle
  \begin{equation*}
    \gamma_{*}M\xrightarrow{n}\gamma_{*}M\xrightarrow{\pi}\gamma_{*}\gamma^{*}\gamma_{*}M\to\gamma_{*}M[1],
  \end{equation*}
  and since the first map is zero the triangle splits and the Lemma follows.
\end{proof}

\begin{lem}\label{cone-beta} We have in $\dm(\F,R/n)$:
  \begin{equation*}
    \gamma^{*}\gamma_{*}\cone(\beta_{n})=\cone(\beta_{n})\oplus\cone(\beta_{n})[1]
  \end{equation*}
\end{lem}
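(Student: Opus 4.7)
The strategy is to apply the exact functor $\gamma^{*}\gamma_{*}$ to the defining triangle
\begin{equation*}
R/n \xrightarrow{\beta_{n}} R/n(1) \to \cone(\beta_{n}) \to R/n[1]
\end{equation*}
in $\dm(\F,R/n)$ and to identify each resulting term and map. Since $R/n = \gamma^{*} R$ and $R/n(1) = \gamma^{*} R(1)$, \cref{bfb} applied with $M=R$ and $M=R(1)$ immediately yields splittings $\gamma^{*}\gamma_{*} R/n \cong R/n \oplus R/n[1]$ and $\gamma^{*}\gamma_{*} R/n(1) \cong R/n(1) \oplus R/n(1)[1]$.

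The crux is to identify $\gamma^{*}\gamma_{*}\beta_{n}$ under these splittings with the diagonal map $\beta_{n} \oplus \beta_{n}[1]$. My plan is to show that the endofunctor $\gamma^{*}\gamma_{*}$ of $\dm(\F,R/n)$ is naturally isomorphic to $(-)\otimes_{R/n} \gamma^{*}\gamma_{*} R/n$. Indeed, for $Y\in\dm(\F,R/n)$ one has $\gamma^{*}\gamma_{*}Y \cong \gamma_{*}Y \otimes^{\mathrm{L}}_{R} R/n$, and change-of-rings associativity gives
\begin{equation*}
\gamma_{*}Y \otimes^{\mathrm{L}}_{R} R/n \;\cong\; Y \otimes^{\mathrm{L}}_{R/n} (R/n \otimes^{\mathrm{L}}_{R} R/n) \;=\; Y \otimes_{R/n} \gamma^{*}\gamma_{*}R/n.
\end{equation*}
Under this natural isomorphism, $\gamma^{*}\gamma_{*}f$ corresponds to $f \otimes \id$ for every morphism $f$; combined with the splitting $\gamma^{*}\gamma_{*}R/n \cong R/n \oplus R/n[1]$ noted above, this gives $\gamma^{*}\gamma_{*}\beta_{n} \cong \beta_{n} \oplus \beta_{n}[1]$. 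Taking cones then yields $\gamma^{*}\gamma_{*}\cone(\beta_{n}) = \cone(\beta_{n} \oplus \beta_{n}[1]) = \cone(\beta_{n}) \oplus \cone(\beta_{n})[1]$, as required.

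The main obstacle is justifying the natural isomorphism $\gamma^{*}\gamma_{*}(-) \cong (-)\otimes_{R/n}\gamma^{*}\gamma_{*}R/n$ rigorously at the level of the triangulated category $\dm(\F,R/n)$. Morally this is a projection-formula statement for the adjunction $\gamma^{*}\dashv\gamma_{*}$, using that $\gamma^{*}$ is strong monoidal with $\gamma_{*}$ as its lax monoidal right adjoint; one may either invoke the base change formalism for Voevodsky motives or unwind the definitions via an explicit model. As a fallback, one could attempt to compute $\gamma^{*}\gamma_{*}\beta_{n}$ as a $2\times 2$ matrix of morphisms by exploiting naturality with the counit $\epsilon:\gamma^{*}\gamma_{*}\to\id$ and with the connecting morphism of the Bockstein triangle, but ruling out the $(1,2)$-off-diagonal entry, which a priori lies in the non-trivial group $\Hmm^{1,1}(\F,R/n)$, would require additional care.
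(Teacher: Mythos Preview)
Your projection-formula approach is correct and is genuinely different from the paper's. The identification $\gamma^{*}\gamma_{*}(-)\cong(-)\otimes_{R/n}\gamma^{*}\gamma_{*}(R/n)$ is the standard base-change identity for the free/forgetful adjunction along a morphism of commutative ring objects (here $R\to R/n$), valid in the motivic setting; once established, the splitting $\gamma^{*}\gamma_{*}(R/n)\cong R/n\oplus R/n[1]$ from \cref{bfb} with $M=R$ immediately yields the result, and naturality is automatic.

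The paper instead follows precisely your ``fallback'' route: it applies \cref{bfb} separately to $M=R$ and $M=R(1)$, writes $\gamma^{*}\gamma_{*}\beta_{n}$ as a $2\times2$ matrix, asserts that the off-diagonal entries ``necessarily vanish'', and then pins down the diagonal entries after applying $\gamma_{*}$ via \cref{adjunction-faithful} and \cref{fbf}. Your caution about the off-diagonal entry living in $\Hmm^{1,1}(\F,R/n)\cong\F^{\times}_{R}/n$ is exactly on point: this group is not zero in general, so the vanishing cannot be read off from the hom groups alone, and the paper does not spell out why that particular matrix entry is zero for the chosen splittings. Your approach buys you naturality for free and bypasses this delicate step; the paper's approach is more hands-on but leaves that point opaque.
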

\begin{proof}
  We have
  \begin{equation*}
    \gamma^{*}\gamma_{*}\cone(\beta_{n})=\cone\left(\gamma^{*}\gamma_{*}\gamma^{*}R\xrightarrow{\gamma^{*}\gamma_{*}\beta_{n}}\gamma^{*}\gamma_{*}\gamma^{*}R(1)\right),
  \end{equation*}
  and by \cref{bfb}, $\gamma^{*}\gamma_{*}\beta_{n}$ is a morphism between $\gamma^{*}R\oplus\gamma^{*}R[1]$ and $\gamma^{*}R(1)\oplus\gamma^{*}R(1)[1]$. Such a morphism is therefore described by a $2\times 2$-matrix, whose diagonal entries ``are'' elements of $\mu_{n}(\F)_{R}$, while the off-diagonal entries necessarily vanish. To describe the non-trivial entries we can do so after applying $\gamma_{*}$, by \cref{adjunction-faithful}. But by \cref{fbf}, we have $\gamma_{*}\gamma^{*}\gamma_{*}\cone(\beta_{n})=\gamma_{*}\cone(\beta_{n})\oplus\gamma_{*}\cone(\beta_{n})[1]$ which completes the proof.
\end{proof}

\begin{lem}
  \label{nN} We have in $\dm(\F,R/n)$:
  \begin{equation*}
\gamma^{*}\Gamma_{*}\cone(\beta_{N})\in\langle\cone(\beta_{n})\rangle
\end{equation*}
\end{lem}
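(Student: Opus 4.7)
The plan is to compute $\gamma^{*}\Gamma_{*}\beta_{N}$ explicitly as a morphism between direct sums in $\dm(\F,R/n)$ and then exhibit its cone as an extension of shifts of $\cone(\beta_{n})$.

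First, since $n\mid N$, applying $\gamma^{*}$ to the triangle $R\xrightarrow{N}R\to R/N$ (regarded in $\dm(\F,R)$) yields in $\dm(\F,R/n)$ a triangle $R/n\xrightarrow{0}R/n\to\gamma^{*}\Gamma_{*}R/N\to R/n[1]$, which splits canonically so that $\gamma^{*}\Gamma_{*}R/N\cong R/n\oplus R/n[1]$, and similarly for the Tate twist. Under these splittings, $\gamma^{*}\Gamma_{*}\beta_{N}$ takes the form of a $2\times 2$ matrix $M=\begin{pmatrix}a&b\\c&d\end{pmatrix}$, in which $b$ vanishes automatically because $\hom(R/n[1],R/n(1))=0$ by the standard vanishing of motivic cohomology of a field in weight~$1$ outside bidegree $(1,1)$.

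Next I determine the remaining entries using both compatibility lemmas. Applying $\gamma^{*}$ to the commutative squares of \cref{compatibility-bott} and \cref{compatibility-bott-2}, and computing $\gamma^{*}\pi$ and $\gamma^{*}\iota$ in matrix form via the morphisms of triangles they sit in (the answers turn out to be diagonal with entries $(1,m)$ and $(m,1)$, respectively), produces four scalar equations. Each lemma taken in isolation only pins down one diagonal entry on the nose and the other up to $m$-torsion, but together they force $a=\beta_{n}$ and $d=\beta_{n}[1]$ exactly, while $c\in\hom(R/n,R/n(1)[1])$ remains arbitrary subject only to $mc=0$.

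The key observation is then that $M$ is lower-triangular with respect to the splittings, so it fits into a morphism of split triangles
\begin{equation*}
  \xymatrix{R/n[1]\ar[r]\ar[d]_{\beta_{n}[1]}&R/n\oplus R/n[1]\ar[r]\ar[d]^{M}&R/n\ar[d]^{\beta_{n}}\\
  R/n(1)[1]\ar[r]&R/n(1)\oplus R/n(1)[1]\ar[r]&R/n(1),}
\end{equation*}
whose rows are the canonical split triangles $R/n[1]\hookrightarrow R/n\oplus R/n[1]\twoheadrightarrow R/n$ and its Tate twist. Passing to cones of the three vertical maps yields a triangle
\begin{equation*}
  \cone(\beta_{n})[1]\to\gamma^{*}\Gamma_{*}\cone(\beta_{N})\to\cone(\beta_{n})\to\cone(\beta_{n})[2],
\end{equation*}
from which $\gamma^{*}\Gamma_{*}\cone(\beta_{N})\in\langle\cone(\beta_{n})\rangle$ follows because the tt-ideal is closed under shifts and extensions.

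The main obstacle is the bookkeeping in the matrix computation: identifying the bfb-type splittings compatibly with $\pi$ and $\iota$, and extracting the exact values of the diagonal entries. The need to invoke \emph{both} \cref{compatibility-bott} and \cref{compatibility-bott-2} is essential, since otherwise an $m$-torsion ambiguity on one of the diagonal entries would obstruct the morphism-of-split-triangles argument — the left square fails to commute if $d$ differs from $\beta_{n}[1]$ by a nonzero term.
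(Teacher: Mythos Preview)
Your argument is correct, but the paper's proof is considerably shorter and takes a different route. The key observation you are missing is the factorization $\gamma^{*}=\pi^{*}\Gamma^{*}$. With this in hand, the paper first applies \cref{cone-beta} (with $\Gamma,N$ in place of $\gamma,n$) to obtain
\[
\gamma^{*}\Gamma_{*}\cone(\beta_{N})=\pi^{*}\Gamma^{*}\Gamma_{*}\cone(\beta_{N})=\pi^{*}\cone(\beta_{N})\oplus\pi^{*}\cone(\beta_{N})[1],
\]
so that the only remaining task is to verify the \emph{scalar} identity $\pi^{*}\beta_{N}=\beta_{n}$, which follows directly from \cref{compatibility-bott}. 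Thus only one compatibility lemma is needed, no $2\times 2$ matrix bookkeeping arises, and one obtains the stronger conclusion that $\gamma^{*}\Gamma_{*}\cone(\beta_{N})$ is actually a direct sum $\cone(\beta_{n})\oplus\cone(\beta_{n})[1]$ rather than merely an extension with an undetermined off-diagonal term $c$. Your approach works and is a nice exercise in tracking splittings, but the factorization trick is the cleaner idea here.
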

\begin{proof}
  First, we have
  \begin{align*}
    \gamma^{*}\Gamma_{*}\cone(\beta_{N})&=\pi^{*}\Gamma^{*}\Gamma_{*}\cone(\beta_{N})\\
    &=\pi^{*}\cone(\beta_{N})\oplus\pi^{*}\cone(\beta_{N})[1]
  \end{align*}
  by \cref{cone-beta}. It now suffices to show that
  \begin{equation*}
    \pi^{*}:\hom_{\dm(\F,R/N)}(R/N,R/N(1))\to\hom_{\dm(\F,R/n)}(R/n,R/n(1))
  \end{equation*}
  maps $\beta_{N}$ to $\beta_{n}$. This follows easily from \cref{compatibility-bott}.
\end{proof}

\begin{lem}
  \label{Nn} Assume the existence of a primitive $(N\cdot n)$th root of unity in $\F$. We then have in $\dm(\F,R/N)$:
  \begin{equation*}
\Gamma^{*}\gamma_{*}\cone(\beta_{n})\in\langle\cone(\beta_{N})\rangle
\end{equation*}
\end{lem}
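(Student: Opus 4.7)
The plan is to use the hypothesis to lift both Bott elements to a common Bott element in $\dm(\F, R/Nn)$, and then invoke the projection formula to identify $\pi_{*}\cone(\beta_{n})$ with $\cone(\beta_{N})$ tensored with an explicit object of $\dm(\F, R/N)$.

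First I would factor $\gamma = \pi \circ \Gamma$ so that $\gamma_{*} = \Gamma_{*}\pi_{*}$ and write $\Gamma^{*}\gamma_{*}\cone(\beta_{n}) = \Gamma^{*}\Gamma_{*}\pi_{*}\cone(\beta_{n})$. Repeating the argument of \cref{bfb}/\cref{cone-beta} with $\Gamma$ in place of $\gamma$ --- tensor the Bockstein triangle \hyperref[eq:bockstein]{(BN)} with $\Gamma_{*}Y$ in $\dm(\F, R)$ and note that $N \cdot \id_{Y} = 0$ for any $Y \in \dm(\F, R/N)$, which forces the triangle to split --- produces a natural isomorphism $\Gamma^{*}\Gamma_{*}Y \cong Y \oplus Y[1]$. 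Applied to $Y = \pi_{*}\cone(\beta_{n})$, the Lemma is reduced to showing $\pi_{*}\cone(\beta_{n}) \in \langle \cone(\beta_{N}) \rangle$ inside $\dm(\F, R/N)$.

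To lift the Bott elements, fix a primitive $(Nn)$-th root of unity $\zeta_{Nn} \in \F$. Replacing $\zeta_{Nn}$ by $\zeta_{Nn}^{a}$ for a suitable $a$ coprime to $Nn$ (which exists since any representative of $b^{-1} \pmod N$, with $\zeta_{Nn}^{n} = \zeta_{N}^{b}$, is automatically coprime to $n$ using $n\mid N$), I may arrange $\zeta_{Nn}^{n} = \zeta_{N}$; the relation $\zeta_{n} = \zeta_{N}^{N/n}$ then forces $\zeta_{Nn}^{N} = \zeta_{n}$ automatically. Writing $\sigma:R/Nn \to R/N$ and $\tau:R/Nn \to R/n$ for the canonical projections, and $\beta_{Nn}$ for the associated Bott element, the same computation as in \cref{compatibility-bott} applied to the pairs $(Nn,N)$ and $(Nn,n)$ yields $\sigma^{*}\beta_{Nn} = \beta_{N}$ and $\tau^{*}\beta_{Nn} = \beta_{n}$. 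Using $\tau = \pi\sigma$, hence $\tau^{*} = \pi^{*}\sigma^{*}$, this gives $\cone(\beta_{n}) = \tau^{*}\cone(\beta_{Nn}) = \pi^{*}\cone(\beta_{N})$.

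Finally I would apply $\pi_{*}$ and invoke the projection formula for the symmetric monoidal adjunction $\pi^{*} \dashv \pi_{*}$:
\begin{equation*}
  \pi_{*}\cone(\beta_{n}) = \pi_{*}\pi^{*}\cone(\beta_{N}) \cong \cone(\beta_{N}) \otimes \pi_{*}\one_{R/n} = \cone(\beta_{N}) \otimes R/n,
\end{equation*}
where $R/n$ denotes the Bockstein quotient of $R/N$ by $n$ inside $\dm(\F, R/N)$. Since tt-ideals are closed under tensor with arbitrary objects, this lies in $\langle \cone(\beta_{N}) \rangle$, finishing the argument. The only nontrivial ingredient is the compatible lift of the roots of unity; this is precisely the role played by the $(Nn)$-th root hypothesis, and the rest of the argument is formal manipulation of change-of-coefficients adjunctions and the projection formula.
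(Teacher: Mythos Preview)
Your reduction step is where the argument breaks. Tensoring the Bockstein triangle with $\Gamma_*Y$ and using $N\cdot\id_Y=0$ only gives you $\Gamma_*\Gamma^*\Gamma_*Y\cong\Gamma_*Y\oplus\Gamma_*Y[1]$ in $\dm(\F,R)$ (this is the analogue of \cref{fbf}), not the claimed splitting $\Gamma^*\Gamma_*Y\cong Y\oplus Y[1]$ in $\dm(\F,R/N)$. The argument of \cref{bfb} applies only to objects of the form $\Gamma^*M$, and conservativity of $\Gamma_*$ does not let you lift the splitting. In fact your splitting is false in general: take $R=\Z$, $N=4$, and $Y=\Z/2\in\D{\Z/4}$. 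Then $\Gamma^*\Gamma_*Y$ is the two-term complex $[\Z/4\xrightarrow{2}\Z/4]$; by adjunction its endomorphism ring in $\D{\Z/4}$ is $\hom_{\D{\Z}}(\Z/2,\Z/2\oplus\Z/2[1])\cong(\Z/2)^2$, whereas $\End_{\D{\Z/4}}(\Z/2\oplus\Z/2[1])\cong(\Z/2)^3$ since $\mathrm{Ext}^1_{\Z/4}(\Z/2,\Z/2)=\Z/2$. So the two objects are not isomorphic.

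There is a related issue in your final step. The projection formula gives $\pi_*\pi^*\cone(\beta_N)\cong\cone(\beta_N)\otimes\pi_*\one$, but $\pi_*\one$ is the constant sheaf $R/n$ in degree~$0$, not the cone of $n$ on $R/N$; when $n<N$ the module $R/n$ is not perfect over $R/N$, so $\pi_*\one$ is not compact and you cannot conclude that $\cone(\beta_N)\otimes\pi_*\one$ lies in the tt-ideal $\langle\cone(\beta_N)\rangle\subset\dm(\F,R/N)$. The paper sidesteps both problems by producing directly a triangle $\cone(\beta_N)\xrightarrow{n}\cone(\beta_N)\to\Gamma^*\gamma_*\cone(\beta_n)$: it writes down the relevant $3\times 3$ diagram and verifies, by an explicit chain-level computation with the model $\mathcal{O}^\times_R[-1]$ for $R(1)$, that $\Gamma^*\gamma_*\beta_n$ fills in the missing arrow --- the primitive $(Nn)$th root enters as a homotopy witnessing this. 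Your lift of the Bott elements through $R/Nn$ is a nice idea, but you would still need to exhibit $\Gamma^*\gamma_*\cone(\beta_n)$ as built from $\cone(\beta_N)$ by compact operations, and the comonad $\Gamma^*\Gamma_*$ over a non-regular ring does not cooperate in the way you assume.
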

\begin{proof}
  More precisely we are going to prove that the cone $C$ of multiplication by $n$ on $\cone(\beta_{N})$ is $\cone(\beta_{n})\oplus\cone(\beta_{n})[1]$.

  Consider the following commutative diagram of solid arrows:
  \begin{equation}\label{nN-diagram}
    \xymatrix{\Gamma^{*}R\ar[r]^{n}\ar[d]_{\beta_{N}}&\Gamma^{*}R\ar[d]_{\beta_{N}}\ar[r]&\Gamma^{*}\gamma_{*}\gamma^{*}R\ar[r]\ar@{.>}[d]&\Gamma^{*}R[1]\ar[d]_{\beta_{N}[1]}\\
      \Gamma^{*}R(1)\ar[r]_{n}&\Gamma^{*}R(1)\ar[r]&\Gamma^{*}\gamma_{*}\gamma^{*}R(1)\ar[r]&\Gamma^{*}R(1)[1]}
  \end{equation}
  We want to prove that $\Gamma^{*}\gamma_{*}\beta_{n}$ makes this diagram commutative. Taking cones of the vertical maps and applying the octahedral axiom the Lemma would then be proved. We may prove commutativity after applying $\Gamma_{*}$ by \cref{adjunction-faithful}. In fact, we will prove that after applying $\Gamma_{*}$, the morphism $\Gamma^{*}\gamma_{*}\beta_{n}$ is identified with $\alpha$, the ``natural'' cone coming from a model. For this we will work in the homotopy category of bounded complexes of Nisnevich sheaves with transfers (\ie before $\mathbb{A}^{1}$-localization). As a model for $R(1)$ we will use $\mathcal{O}^{\times}_{R}[-1]$. The Bott element $\beta_{N}$ can then be modeled by the following morphism of complexes, where the last term is in degree $-1$:
\begin{equation*}
  \xymatrix{R\ar[d]_{N}\\
    R\ar[r]^{\zeta_{N}}&\mathcal{O}^{\times}_{R}\ar[d]^{N}\\
    &\mathcal{O}^{\times}_{R}}
\end{equation*}
Taking the mapping cone of multiplication by $n$ we obtain the following model for $\alpha$:
\begin{align*}
  \xymatrix@C=70pt{R\ar[d]_{
      \begin{pmatrix}
        -N\\
        -n
      \end{pmatrix}
}\\
R\oplus R\ar[d]_{
  \begin{pmatrix}
    -n&N
  \end{pmatrix}
}\ar[r]^{
        \begin{pmatrix}
          \zeta_{N}&0
        \end{pmatrix}
}&\mathcal{O}^{\times}_{R}\ar[d]^{
      \begin{pmatrix}
        -N\\
        -n
      \end{pmatrix}
}\\
R\ar[r]_{
  \begin{pmatrix}
    0\\\zeta_{N}
  \end{pmatrix}
}&\mathcal{O}^{\times}_{R}\oplus \mathcal{O}^{\times}_{R}\ar[d]^{
  \begin{pmatrix}
    -n&N
  \end{pmatrix}
}\\
&\mathcal{O}^{\times}_{R}}
\end{align*}
the last term being in degree $-1$.

Now, since $n\mid N$, the domain and the codomain split into direct sums of two-term complexes, and under these identifications, $\alpha$ is identified with
\begin{equation*}
  \xymatrix@C=70pt{R\ar[d]_{
      \begin{pmatrix}
        0\\-n
      \end{pmatrix}
}\\
R\oplus R\ar[d]_{
  \begin{pmatrix}
    n&0
  \end{pmatrix}
}\ar[r]^{
  \begin{pmatrix}
    \zeta_{N}^{-1}&\zeta_{N}^{N/n}
  \end{pmatrix}
}&\mathcal{O}^{\times}_{R}\ar[d]^{
  \begin{pmatrix}
    0\\-n
  \end{pmatrix}
}\\
R\ar[r]_{
  \begin{pmatrix}
    \zeta_{N}^{N/n}\\\zeta_{N}
  \end{pmatrix}
}&\mathcal{O}^{*}_{R}\oplus\mathcal{O}^{\times}_{R}\ar[d]^{
  \begin{pmatrix}
    n&0
  \end{pmatrix}
}\\
&\mathcal{O}^{\times}_{R}}
\end{equation*}
Let $\zeta_{Nn}$ be an $n$th root of $\zeta_{N}$, and define the homotopy $\zeta_{Nn}^{-1}:R\to\mathcal{O}^{\times}_{R}$. It shows that $\alpha$ is homotopic to the map $
\begin{pmatrix}
  0&\zeta_{n}
\end{pmatrix}$ (in degree 1), $
\begin{pmatrix}
  \zeta_{n}\\0
\end{pmatrix}$ (in degree 0), \ie a model for $\Gamma_{*}\Gamma^{*}\gamma_{*}\beta_{n}$. This concludes the proof.
\end{proof}

For the last two results we specialize to the case $N=\ell^{m}$ and $n=\ell$ for some $m\geq 1$.
\begin{lem}\label{bott-nN-integrally}
We have in $\dm(\F,R)$:
  \begin{equation*}
    \Gamma_{*}\cone(\beta_{\ell^{m}})\in\langle\gamma_{*}\cone(\beta_{\ell})\rangle
  \end{equation*}
\end{lem}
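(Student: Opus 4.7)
The plan is to induct on $m$. The base case $m=1$ is trivial since $\Gamma=\gamma$ and $\beta_{\ell^{1}}=\beta_{\ell}$, so $\Gamma_{*}\cone(\beta_{\ell^{m}})=\gamma_{*}\cone(\beta_{\ell})$ tautologically belongs to the tt-ideal it generates.

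For the inductive step, let $\Gamma':R\to R/\ell^{m-1}$ and consider the compatible system of roots of unity obtained from $\zeta_{\ell^{m}}$: set $\zeta_{\ell^{m-1}}:=\zeta_{\ell^{m}}^{\ell}$ and $\zeta_{\ell}:=\zeta_{\ell^{m}}^{\ell^{m-1}}$. The short exact sequence $0\to R/\ell^{m-1}\xrightarrow{\cdot\ell}R/\ell^{m}\xrightarrow{\pi}R/\ell\to 0$ yields a distinguished triangle in $\dm(\F,R)$. Applying \cref{compatibility-bott-2} to the pair $(N,n)=(\ell^{m},\ell^{m-1})$ (with ratio $m=\ell$) and \cref{compatibility-bott} to the pair $(N,n)=(\ell^{m},\ell)$ gives the commutative diagram
\[
\xymatrix{
R/\ell^{m-1}\ar[r]^-{\ell}\ar[d]_{\Gamma'_{*}\beta_{\ell^{m-1}}}&R/\ell^{m}\ar[r]^-{\pi}\ar[d]_{\Gamma_{*}\beta_{\ell^{m}}}&R/\ell\ar[d]^{\gamma_{*}\beta_{\ell}}\\
R/\ell^{m-1}(1)\ar[r]_-{\ell}&R/\ell^{m}(1)\ar[r]_-{\pi}&R/\ell(1)
}
\]
whose rows are (part of) the above triangle and its Tate twist.

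The next step is to invoke Verdier's $3\times 3$-lemma: the two commuting squares above, together with the octahedral axiom applied to the composition $R/\ell^{m-1}\xrightarrow{\ell}R/\ell^{m}\xrightarrow{\Gamma_{*}\beta_{\ell^{m}}}R/\ell^{m}(1)$ and to $R/\ell^{m}\xrightarrow{\pi}R/\ell\xrightarrow{\gamma_{*}\beta_{\ell}}R/\ell(1)$, extend to a morphism of triangles whose cones assemble into a distinguished triangle
\[
\Gamma'_{*}\cone(\beta_{\ell^{m-1}})\longrightarrow\Gamma_{*}\cone(\beta_{\ell^{m}})\longrightarrow\gamma_{*}\cone(\beta_{\ell})\longrightarrow\Gamma'_{*}\cone(\beta_{\ell^{m-1}})[1].
\]
By the induction hypothesis applied to $m-1$, the leftmost term lies in $\langle\gamma_{*}\cone(\beta_{\ell})\rangle$; the rightmost term trivially does. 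Since a tt-ideal is closed under cones (it is, in particular, a triangulated subcategory), the middle term $\Gamma_{*}\cone(\beta_{\ell^{m}})$ belongs to $\langle\gamma_{*}\cone(\beta_{\ell})\rangle$ as well.

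The main subtlety is the $3\times 3$-lemma step: one must verify that the third square (involving the connecting morphisms $R/\ell\to R/\ell^{m-1}[1]$ and its Tate twist) can be made commutative after an appropriate choice. This is precisely the content of Verdier's axiom and is automatic given the octahedral axiom. Everything else reduces to bookkeeping and the two compatibility lemmas already established.
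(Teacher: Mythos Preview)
Your overall strategy---induction on $m$ via the triangle $R/\ell^{m-1}\xrightarrow{\ell} R/\ell^{m}\to R/\ell\to$---is exactly the paper's. The gap is in the step you flag as ``the main subtlety'' and then dismiss: commutativity of the third square (the one involving the connecting morphism $\delta$) is \emph{not} automatic from the octahedral axiom. The $3\times 3$-lemma, applied to the left commutative square alone, produces \emph{some} fill-in $h:R/\ell\to R/\ell(1)$ together with a triangle on the cones; it does not guarantee that $h=\gamma_{*}\beta_{\ell}$. Knowing (via \cref{compatibility-bott}) that the middle square also commutes with $\gamma_{*}\beta_{\ell}$ only tells you that $h-\gamma_{*}\beta_{\ell}$ factors through $\delta$, not that it vanishes. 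So you must either show the obstruction group $\hom(R/\ell^{m-1}[1],R/\ell(1))$ vanishes (plausible, but not argued), or identify $h$ directly.

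The paper takes the second route: it works with an explicit chain-level model (using $\mathcal{O}^{\times}_{R}[-1]$ for $R(1)$), where the mapping-cone construction is strictly functorial, computes the induced map on the cones of multiplication by~$\ell$, and verifies by hand that this map represents $\beta_{\ell}$. That model-level computation is precisely what guarantees all three squares commute simultaneously---something abstract triangulated-category reasoning cannot deliver for free.
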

\begin{proof}
  The proof is by induction on $m$. Assume $m>1$ and consider the following diagram in $\dtm(\F,R)$:
  \begin{equation}\label{beta-induction}
    \xymatrix{R/\ell^{m-1}\ar[r]^{\ell}\ar[d]_{\beta_{\ell^{m-1}}}&R/\ell^{m}\ar[d]_{\beta_{\ell^{m}}}\ar[r]&R/\ell\ar[d]_{\beta_{\ell}}\ar[r]^{\delta}&R/\ell^{m-1}[1]\ar[d]_{\beta_{\ell^{m-1}}[1]}\\
R/\ell^{m-1}(1)\ar[r]^{\ell}&R/\ell^{m}(1)\ar[r]&R/\ell(1)\ar[r]^{\delta}&R/\ell^{m-1}(1)[1]}
  \end{equation}
The two rows are triangles thus if the diagram commutes we can take cones of the vertical maps and the induction hypothesis will allow to conclude.

We now proceed to describe this diagram using the same model as in the previous proof. Commutativity of the first square is \cref{compatibility-bott-2}. We may therefore compute the induced morphism on the mapping cones of multiplication by $\ell$:
\begin{align*}
  \xymatrix@C=70pt{R\ar[d]_{
      \begin{pmatrix}
        -\ell^{m-1}\\
        -1
      \end{pmatrix}
}\\
R\oplus R\ar[d]_{
  \begin{pmatrix}
    -\ell&\ell^{m}
  \end{pmatrix}
}\ar[r]^{
        \begin{pmatrix}
          \zeta_{\ell^{m-1}}&0
        \end{pmatrix}
}&\mathcal{O}^{\times}_{R}\ar[d]^{
      \begin{pmatrix}
        -\ell^{m-1}\\
        -1
      \end{pmatrix}
}\\
R\ar[r]_{
  \begin{pmatrix}
    0\\\zeta_{\ell^{m}}
  \end{pmatrix}
}&\mathcal{O}^{\times}_{R}\oplus \mathcal{O}^{\times}_{R}\ar[d]^{
  \begin{pmatrix}
    -\ell&\ell^{m}
  \end{pmatrix}
}\\
&\mathcal{O}^{\times}_{R}}
\end{align*}
the last term being in degree $-1$.

Now, the domain and codomain of this morphism identifies with $R/\ell$ and $R/\ell(1)$ respectively:
\begin{align*}
  \xymatrix@C=80pt{&R\ar[d]^{
      \begin{pmatrix}
        -\ell^{m-1}\\
        -1
      \end{pmatrix}
}\\
R\ar[d]_{\ell}\ar[r]^{
  \begin{pmatrix}
    -1\\0
  \end{pmatrix}
}&R\oplus R\ar[d]_{
  \begin{pmatrix}
    -\ell&\ell^{m}
  \end{pmatrix}
}\ar[r]^{
        \begin{pmatrix}
          \zeta_{\ell^{m-1}}&0
        \end{pmatrix}
}&\mathcal{O}^{\times}_{R}\ar[d]_{
      \begin{pmatrix}
        -\ell^{m-1}\\
        -1
      \end{pmatrix}
}\\
R\ar[r]_{1}&R\ar[r]_{
  \begin{pmatrix}
    0\\\zeta_{\ell^{m}}
  \end{pmatrix}
}&\mathcal{O}^{\times}_{R}\oplus \mathcal{O}^{\times}_{R}\ar[d]^{
  \begin{pmatrix}
    -\ell&\ell^{m}
  \end{pmatrix}
}\ar[r]^{
           \begin{pmatrix}
             -1&\ell^{m-1}
           \end{pmatrix}
}&{\cal O}^{\times}_{R}\ar[d]^{\ell}\\
&&\mathcal{O}^{\times}_{R}\ar[r]_{1}&{\cal O}^{\times}_{R}}
\end{align*}
The composition is a model for $\beta_{\ell}$ using that $\zeta_{\ell^{m}}^{m-1}=\zeta_{\ell}$.
\end{proof}

\begin{cor}\label{bott-equality}
  Assume that $\F$ contains a primitive $\ell^{m+1}$th root of unity. Then in $\dm(\F,R)$ we have:
  \begin{equation*}
    \langle\Gamma_{*}\cone(\beta_{\ell^{m}})\rangle=\langle\gamma_{*}\cone(\beta_{\ell})\rangle
  \end{equation*}
\end{cor}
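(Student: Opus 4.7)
The inclusion $\langle\Gamma_{*}\cone(\beta_{\ell^{m}})\rangle \subset \langle\gamma_{*}\cone(\beta_{\ell})\rangle$ is the content of \cref{bott-nN-integrally}, so my task is to prove the reverse inclusion, that is, $\gamma_{*}\cone(\beta_{\ell}) \in \langle\Gamma_{*}\cone(\beta_{\ell^{m}})\rangle$. This is where the extra hypothesis on the primitive $\ell^{m+1}$th root of unity will come into play, via \cref{Nn}.

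The starting point is \cref{Nn} applied with $N=\ell^{m}$ and $n=\ell$ (the required primitive $Nn=\ell^{m+1}$th root of unity is available by assumption), which gives $\Gamma^{*}\gamma_{*}\cone(\beta_{\ell}) \in \langle\cone(\beta_{\ell^{m}})\rangle$ inside $\dm(\F,R/\ell^{m})$. I would then push this relation forward along $\Gamma_{*}$. On one hand, the formula $\Gamma_{*}\Gamma^{*}M=M\otimes R/\ell^{m}$ recalled in \cref{sec:DTM}, applied to $M=\gamma_{*}\cone(\beta_{\ell})$, combined with the fact that this $M$ is $\ell$-torsion (hence annihilated by $\ell^{m}$), makes the tensor product with $R/\ell^{m}=\cone(\ell^{m})$ split, yielding
\[
\Gamma_{*}\Gamma^{*}\gamma_{*}\cone(\beta_{\ell}) \cong \gamma_{*}\cone(\beta_{\ell}) \oplus \gamma_{*}\cone(\beta_{\ell})[1].
\]
On the other hand, I claim that this object lies in $\langle\Gamma_{*}\cone(\beta_{\ell^{m}})\rangle$. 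Since tt-ideals are closed under direct summands, this would force $\gamma_{*}\cone(\beta_{\ell}) \in \langle\Gamma_{*}\cone(\beta_{\ell^{m}})\rangle$ and finish the proof.

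The remaining claim amounts to showing that $\Gamma_{*}$ sends the tt-ideal $\langle\cone(\beta_{\ell^{m}})\rangle$ in $\dm(\F,R/\ell^{m})$ into $\langle\Gamma_{*}\cone(\beta_{\ell^{m}})\rangle$ in $\dm(\F,R)$. The cleanest way is to verify that
\[
\mathcal{J} := \{Y \in \dm(\F,R/\ell^{m}) \mid \Gamma_{*}Y \in \langle\Gamma_{*}\cone(\beta_{\ell^{m}})\rangle\}
\]
is a tt-ideal containing $\cone(\beta_{\ell^{m}})$. Thickness is immediate from the exactness of $\Gamma_{*}$. For the ideal property, the projection formula gives $\Gamma_{*}(\Gamma^{*}V \otimes Y) = V \otimes \Gamma_{*}Y$ for $V \in \dm(\F,R)$, so $\Gamma^{*}V \otimes Y \in \mathcal{J}$ whenever $Y \in \mathcal{J}$; and fixing such a $Y$, the full subcategory $\{W \mid W \otimes Y \in \mathcal{J}\}$ is thick and contains all $\Gamma^{*}R(X)(n)=(R/\ell^{m})(X)(n)$, hence all of $\dm(\F,R/\ell^{m})$ by thick generation.

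The main obstacle is precisely this last step: bridging the gap between relations in $\dm(\F,R/\ell^{m})$ (where \cref{Nn} lives) and in $\dm(\F,R)$ (where the target ideal sits). The projection formula, together with the fact that $\Gamma^{*}$ hits a set of thick generators, is the standard technical tool that makes this bridge; what requires care is formulating it as a statement about tt-ideals rather than just individual objects, which is what the introduction of $\mathcal{J}$ accomplishes.
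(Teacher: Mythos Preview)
Your proof is correct and follows essentially the same approach as the paper: forward inclusion via \cref{bott-nN-integrally}, reverse inclusion via \cref{Nn} followed by pushing forward along $\Gamma_{*}$ and using the splitting of $\Gamma_{*}\Gamma^{*}\gamma_{*}\cone(\beta_{\ell})$. The paper obtains the splitting by writing $\gamma_{*}=\Gamma_{*}\pi_{*}$ and invoking \cref{fbf} (for $\Gamma$), and it skips your $\mathcal{J}$-argument because the proof of \cref{Nn} actually shows the sharper statement that $\Gamma^{*}\gamma_{*}\cone(\beta_{\ell})$ is the cone of multiplication by $\ell$ on $\cone(\beta_{\ell^{m}})$, so after applying $\Gamma_{*}$ one lands in $\langle\Gamma_{*}\cone(\beta_{\ell^{m}})\rangle$ by a single triangle; your projection-formula argument is a clean general substitute for this.
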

\begin{proof}
  The forward inclusion is \cref{bott-nN-integrally}. For the reverse inclusion we may invoke \cref{Nn} and obtain that
  \begin{equation*}
    \Gamma_{*}\Gamma^{*}\gamma_{*}\cone(\beta_{\ell})\in\langle\Gamma_{*}\cone(\beta_{\ell^{m}})\rangle.
  \end{equation*}
But $\Gamma_{*}\Gamma^{*}\gamma_{*}\cone(\beta_{\ell})=\Gamma_{*}\Gamma^{*}\Gamma_{*}\pi_{*}\cone(\beta_{\ell})=\gamma_{*}\cone(\beta_{\ell})\oplus\gamma_{*}\cone(\beta_{\ell})[1]$ by \cref{fbf}. 
We conclude that also $\gamma_{*}\cone(\beta_{\ell})\in\langle\Gamma_{*}\cone(\beta_{\ell^{m}})\rangle$.
\end{proof}

\section{Inverting the motivic Bott element}
\label{sec:bott}

In~\cite{haesemeyer-hornbostel:bott}, Haesemeyer and Hornbostel prove that under certain assumptions on $\F$, the étale sheafification functor
\begin{equation*}
  \aet:\dm(\F,\Z/n)\to\dmet(\F,\Z/n)
\end{equation*}
can be seen as the functor inverting the Bott element $\beta_{n}:\Z/n\to\Z/n(1)$. In other words, it induces an identification
\begin{equation*}
\dm(\F,\Z/n)[\beta_{n}^{-1}]\simeq  \dmet(\F,\Z/n).
\end{equation*}
Our goal in this section is to prove an analogous result with integral coefficients.

We first restate Haesemeyer-Hornbostel's result in a slightly improved form.
\begin{thm}[{\cite{haesemeyer-hornbostel:bott}}]\label{hh-bott} Let $\F$ be a field, and $n$ an integer. We assume:
  \begin{itemize}
  \item $n$ is prime to the characteristic of $\F$,
  \item $\F$ contains the $n$th roots
    of unity (respectively, and the 4th roots of unity if $n$ is even),
  \item $\F$ has finite étale $n$-dimension.
  \end{itemize}
Then étale sheafification induces equivalences of tensor triangulated categories
\begin{align*}
  \DMeff(\F,\Z/n)/\langle\cone(\beta_{n})\rangle^{\oplus}&\xrightarrow{\sim}\DMeteff(\F,\Z/n)\\
  \left(
  \dmeff(\F,\Z/n)/\langle\cone(\beta_{n})\rangle
  \right)^{\natural}&\xrightarrow{\sim}\dmeteff(\F,\Z/n)\\
  \DM(\F,\Z/n)/\langle\cone(\beta_{n})\rangle^{\oplus}&\xrightarrow{\sim}\DMet(\F,\Z/n)\\
  \left(
  \dm(\F,\Z/n)/\langle\cone(\beta_{n})\rangle
  \right)^{\natural}&\xrightarrow{\sim}\dmet(\F,\Z/n)
\end{align*}  
\end{thm}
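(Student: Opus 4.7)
I would proceed in three steps, bootstrapping from the effective ``big'' version (which is essentially the original Haesemeyer--Hornbostel statement) to the other three variants. Concretely: (1) establish the first equivalence directly via the cited work; (2) deduce the non-effective big equivalence by $\otimes$-inverting $\Z/n(1)$; and (3) pass to the small versions by restricting to compact objects and idempotent-completing.

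\textbf{Step 1 (effective big).} The chosen primitive $n$-th root of unity identifies $\aet(\Z/n(1))$ with the constant étale sheaf $\mu_n \cong \Z/n$ in $\DMeteff(\F,\Z/n)$, making $\aet(\beta_n)$ an isomorphism. Hence $\aet(\cone(\beta_n)) = 0$, and since $\aet$ preserves arbitrary coproducts, it factors through the Verdier quotient
\[
\overline{\aet}: \DMeff(\F,\Z/n)/\langle\cone(\beta_n)\rangle^{\oplus} \longrightarrow \DMeteff(\F,\Z/n).
\]
That $\overline{\aet}$ is an equivalence is the content of Haesemeyer--Hornbostel's theorem, whose hypotheses (finite $n$-cohomological dimension and presence of the appropriate roots of unity) are exactly those assumed. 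One only needs to reinterpret their Bousfield-localization statement at the morphism $\beta_n$ as a Verdier quotient by the localizing tt-ideal generated by $\cone(\beta_n)$; these two formulations coincide in the big setting because $\beta_n$-locality is precisely the property of being right-orthogonal to $\cone(\beta_n)$.

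\textbf{Step 2 (non-effective big).} The tt-ideal $\langle\cone(\beta_n)\rangle^{\oplus}$ is stable under tensoring with $\Z/n(k)$ for every $k \in \Z$, so Verdier localization at this ideal commutes with $\otimes$-inverting $\Z/n(1)$. Since $\aet$ also commutes with Tate twists, the non-effective big equivalence is obtained from the effective one by $\otimes$-inverting $\Z/n(1)$ on both sides. For Step 3, recall that $\cone(\beta_n) \in \dm(\F,\Z/n)$ is compact; therefore $\langle\cone(\beta_n)\rangle^{\oplus}$ is a compactly generated localizing ideal whose compact part is the thick ideal $\langle\cone(\beta_n)\rangle$. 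By the Neeman--Thomason localization theorem, the induced functor
\[
\bigl(\dm(\F,\Z/n)/\langle\cone(\beta_n)\rangle\bigr)^{\natural} \longrightarrow \bigl(\DM(\F,\Z/n)/\langle\cone(\beta_n)\rangle^{\oplus}\bigr)^{c}
\]
is an equivalence; combined with Step 2 and the identification of compact objects in $\DMet(\F,\Z/n)$ with $\dmet(\F,\Z/n)$, this yields the small non-effective equivalence. The effective small case is entirely parallel.

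\textbf{Main obstacle.} All real mathematical work sits in Step 1, where one imports Haesemeyer--Hornbostel. Steps 2 and 3 are formal. The only delicate point I anticipate is the compatibility between Bott-inversion (\emph{à la} Bousfield, inverting the morphism $\beta_n$) and the Verdier quotient by the tt-ideal $\langle\cone(\beta_n)\rangle^{\oplus}$. In the big setting these agree immediately; for the compact (small) versions the passage requires idempotent completion, which is why $(-)^{\natural}$ appears on the left-hand sides. A second technical verification worth flagging is that the compact generators of $\DMet(\F,\Z/n)$—necessary for identifying its compact part with $\dmet(\F,\Z/n)$—are guaranteed precisely by the finite étale cohomological dimension hypothesis, so the hypothesis is used in two distinct places.
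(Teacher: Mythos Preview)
Your formal Steps~2 and~3 are fine and close in spirit to the paper's own reduction (the paper actually argues in the reverse direction for the non-effective case, reducing the big to the small via compact generators and then using that after inverting $\beta_n$ the object $\Z/n(1)$ is already invertible, so effective and non-effective quotients coincide; your route via $\otimes$-inverting and Neeman--Thomason is an equally valid packaging).

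The real gap is in Step~1. You write that the effective big equivalence ``is the content of Haesemeyer--Hornbostel's theorem, whose hypotheses \ldots\ are exactly those assumed.'' This is not quite right: their original result is stated under the additional assumptions that $\F$ is \emph{perfect} and admits \emph{resolution of singularities}. The paper therefore does genuine work here to remove both hypotheses. Resolution of singularities is dispensed with by invoking Kelly's results (which make Voevodsky's foundational theorems on $\dm$ available once the exponential characteristic is inverted). Perfectness is removed by passing to the inseparable closure $\F^{s}$ and using a commutative square in which the two horizontal base-change functors are known equivalences (by results of Cisinski--D\'eglise), forcing the remaining vertical arrow over $\F$ to be an equivalence as well.

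So as written, your Step~1 is a black box that does not exist in the required generality; you would need to supply the two reductions above before the citation applies.
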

\begin{proof}
  Haesemeyer and Hornbostel prove the first two equivalences under the additional assumption that $\F$ is perfect and admits resolution of singularities. These assumptions are used to apply Voevodsky's fundamental results on $\dm$. As we are inverting the exponential characteristic of $\F$, Kelly's \cite{kelly:dm-ldh} allows to remove the resolution of singularities assumption. Now let $\F$ be arbitrary and let $\F^{s}$ denote its inseparable closure. Then we have the following commutative square induced by scalar extension and étale sheafification:
  \begin{equation*}
    \xymatrix{\DMeff(\F,\Z/n)/\langle\cone(\beta_{n})\rangle^{\oplus}\ar[r]^{\sim}\ar[d]&\DMeff(\F^{s},\Z/n)/\langle\cone(\beta_{n})\rangle^{\oplus}\ar[d]^{\sim}\\
\DMeteff(\F,\Z/n)\ar[r]_{\sim}&\DMeteff(\F^{s},\Z/n)}
  \end{equation*}
  The right vertical arrow is an equivalence since $\F^{s}$ is perfect, the top horizontal arrow by \cite[8.1]{cisinski-deglise:integral-mixed-motives}, and the bottom horizontal arrow by \cite[6.3.16]{cisinski-deglise:etale-motives}. It follows that the left vertical arrow is an equivalence as well. This proves the first equivalence of the Theorem, and the second follows by restricting to the compact objects.

For the third equivalence we notice that both sides are compactly generated and the functor maps onto a set of compact generators. We therefore reduce to prove the fourth equivalence, or indeed that
\begin{equation*}
  \dm(\F,\Z/n)/\langle\cone(\beta_{n})\rangle
  \to\dmet(\F,\Z/n)
\end{equation*}
is fully faithful. For this consider the following commutative square
\begin{equation*}
  \xymatrix{\dmeff(\F,\Z/n)/\langle\cone(\beta_{n})\rangle\ar[r]\ar[d]&\dm(\F,\Z/n)/\langle\cone(\beta_{n})\rangle\ar[d]\\
\dmeteff(\F,\Z/n)\ar[r]&\dmet(\F,\Z/n)}
\end{equation*}
The left vertical arrow is fully faithful and the bottom horizontal arrow is an equivalence since $\Z/n\cong\Z/n(1)$. It therefore suffices to prove that the top horizontal arrow is an equivalence as well. This follows immediately from $\dm(\F,\Z/n)=\dmeff(\F,\Z/n)[(\otimes\Z/n(1))^{-1}]$ and again the fact that $\Z/n\cong\Z/n(1)$ after inverting $\beta_{n}$.
\end{proof}

\begin{rmk}
 The assumption on the cohomological dimension of $\F$ seems reasonable since for example the first equivalence in \cref{hh-bott} implies that the triangulated category $\DMeteff(\F,\Z/n)$ is compactly generated by smooth varieties.
\end{rmk}

\begin{con}
From now on we fix a field $\F$ and a set of primes $S$ containing the exponential characteristic of $\F$. We denote by $R$ the localization $S^{-1}\Z\subset\Q$. $\F$ is assumed to have finite $\ell$-cohomological dimension for every prime $\ell\notin S$, and to contain a primitive $\ell^{n}$th root of unity for every $\ell\notin S$ and every $n\geq 1$.
\end{con}

For any $\ell\notin S$, fix a primitive $\ell$th root of unity and interpret it as a morphism $\beta_{\ell}:\Z/\ell\to\Z/\ell(1)$ in $\dm(\F,\Z/\ell)$. We denote (abusively) the image of its cone in $\dm(\F,R)$ by $\ff\cone(\beta_{\ell})$.
\begin{thm}\label{bott-invert}

  The étale sheafification functor induces an equivalence of tt-categories
  \begin{equation*}
    \DM(\F,R)/\langle\ff\cone(\beta_{\ell})\mid\ell\notin S\rangle^{\oplus}\xrightarrow{\simeq}\DMet(\F,R).
  \end{equation*}
  In particular, it induces an equivalence on the level of geometric motives
  \begin{equation*}
    \left(
      \dm(\F,R)/\langle\ff\cone(\beta_{\ell})\mid\ell\notin S\rangle
\right)^{\natural}\xrightarrow{\simeq}\dmet(\F,R).
  \end{equation*}

  The same results hold for the effective versions.
\end{thm}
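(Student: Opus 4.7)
The plan is first to check that $\aet$ annihilates $\ff\cone(\beta_\ell)$ for each $\ell\notin S$, so that it descends to the stated quotient. Using the compatibility of étale sheafification with change of coefficients, $\aet\ff\cone(\beta_\ell)$ identifies with $\ff\aet\cone(\beta_\ell)$; and in $\DMet(\F,\Z/\ell)$ the morphism $\beta_\ell$ is invertible, because via the Rigidity Theorem (\cref{rigidity}) and the chosen primitive $\ell$-th root of unity both $\Z/\ell$ and $\Z/\ell(1)$ correspond to the trivial Galois module and $\beta_\ell$ to the canonical identification between them. Hence $\aet$ factors through a cocontinuous tt-functor
\begin{equation*}
  \bar{\aet}\colon\mathcal{L}:=\DM(\F,R)/\langle\ff\cone(\beta_\ell)\mid\ell\notin S\rangle^{\oplus}\longrightarrow\DMet(\F,R).
\end{equation*}

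I would then prove $\bar{\aet}$ is an equivalence by reduction along change of coefficients. Both sides are $R$-linear compactly generated tt-categories (compact generation of $\DMet(\F,R)$ relies on the finite $\ell$-cohomological dimension hypothesis), and $\bar{\aet}$ preserves a generating set of compact objects, so it admits a right adjoint by Brown representability. It therefore suffices to show the unit and counit become invertible after each change of coefficients $R\to\Q$ and $R\to\Z/\ell$ for $\ell\notin S$, since any $R$-linear object is detected by its rationalization together with its derived $\Z/\ell$-reductions for primes $\ell\notin S$. Rationally, each $\ff\cone(\beta_\ell)$ is $\ell$-torsion, so $\mathcal{L}\otimes\Q=\DM(\F,\Q)$, which matches $\DMet(\F,R)\otimes\Q=\DMet(\F,\Q)$ via \cref{rational-equivalence}. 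After base change to $\Z/\ell$ with $\ell\notin S$, the objects $\ff\cone(\beta_{\ell'})$ for $\ell'\neq\ell$ are $\ell'$-primary torsion and vanish, leaving $\mathcal{L}\otimes\Z/\ell=\DM(\F,\Z/\ell)/\langle\cone(\beta_\ell)\rangle^{\oplus}$; by \cref{hh-bott} this equals $\DMet(\F,\Z/\ell)=\DMet(\F,R)\otimes\Z/\ell$. In each case the induced functor is canonically $\bar{\aet}$ at the corresponding coefficients.

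The statement for geometric motives then follows because compact objects in a Verdier quotient by a compactly generated localizing ideal coincide, up to idempotent completion, with the image of the compacts of the source; restricting $\bar{\aet}$ to compacts and idempotent-completing yields the second equivalence. The effective variant runs in parallel, invoking the effective halves of \cref{hh-bott} and of the rational equivalence. The main technical point is the detection principle: one must verify that the Verdier quotient commutes with the base changes to $\Z/\ell$ and to $\Q$ as used, which is clean here because the localizing ideal is generated by compact torsion objects with disjoint primary supports, and one must invoke compact generation of $\DMet(\F,R)$ so that $\bar{\aet}$ has a well-behaved right adjoint. I expect these to be the principal, but manageable, obstacles.
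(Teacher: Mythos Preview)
Your outline is on the right track and in spirit parallels the paper's argument: both reduce the integral statement to the rational case (\cref{rational-equivalence}) and the mod-$\ell$ case (\cref{hh-bott}) via some form of arithmetic fracture, and both deduce the compact statement from the big one by the standard fact about compacts in a finite Bousfield localization. There is, however, a genuine difference in the decomposition. The paper does \emph{not} use a conservativity principle of the form ``$X=0$ iff $X\otimes\Q=0$ and $X\otimes R/\ell=0$ for all $\ell$''. Instead it works hom-set by hom-set: for $M$ compact and $N$ arbitrary it applies $\hom_{\mathcal L}(M,-)$ to the triangle $N\to N\otimes\Q\to N\otimes\Q/R\to N[1]$, and then decomposes $\Q/R=\bigoplus_{\ell\notin S}R[\ell^{\infty}]$ with $R[\ell^{\infty}]=\hocolim_n R/\ell^{n}$. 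This forces the paper to establish compatibility of the quotient with change of coefficients to $R/\ell^{n}$ for \emph{all} $n$, whence the technical lemmas of \cref{sec:bott-coefficients} (\cref{Nn}, \cref{bott-nN-integrally}, \cref{bott-equality}) comparing $\beta_{\ell}$ and $\beta_{\ell^{n}}$. Your detection principle, if carried out, would only require $n=1$ and thus only \cref{cone-beta}, which is a pleasant simplification.

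That said, the sentence ``$\mathcal L\otimes\Z/\ell=\DM(\F,\Z/\ell)/\langle\cone(\beta_\ell)\rangle^{\oplus}$'' is exactly where the real work hides, and as written it is not a proof. What must be shown is that the change-of-coefficients adjunction $\bc\dashv\ff$ for $R\to R/\ell$ descends to an adjunction between $\mathcal L$ and $\DM(\F,\Z/\ell)/\langle\cone(\beta_\ell)\rangle^{\oplus}$, and that under this identification ``tensoring with $R/\ell$'' in $\mathcal L$ agrees with $\bar\ff\bar\bc$, so that the unit of $\bar a_{\et}$ base-changes to the unit of the (known) equivalence $\bar a_{\et,\ell}$. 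The paper isolates this as \cref{triangulated-adjunction} (requiring both $\bc\mathcal B\subset\mathcal B_\ell$ and $\ff\mathcal B_\ell\subset\mathcal B$) and then verifies the identity $q\ff\bc=\bar\ff\bar\bc q$ explicitly; for $n=1$ you would need \cref{cone-beta} for the first inclusion. You should also make precise why the right adjoint $G$ to $\bar a_{\et}$ commutes with $-\otimes R/\ell$ (this is just exactness on a cone, but it is the step that turns ``$\bar a_{\et,\ell}$ is an equivalence'' into ``the unit of $\bar a_{\et}$ becomes an isomorphism after $\otimes R/\ell$''). In short: your plan is sound and arguably streamlines the paper's, but the ``manageable obstacle'' you flag is precisely the substance of the proof and needs to be written out.
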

\begin{proof} In this proof, we denote by $\mathcal{D}$ the category $\DM(\F,R)$, by $\mathcal{D}^{\et}$ its étale version, by ${\cal B}$ (respectively ${\cal B}^{c}$) the localizing (respectively thick) subcategory of $\mathcal{D}$ generated by $\ff\cone(\beta_{\ell})$, $\ell\notin S$.

$\mathcal{D}$ is compactly generated, and the objects $\ff\cone(\beta_{\ell})$ are compact. It follows that the full subcategory of compact objects in the localization $\mathcal{D}/{\cal B}$ is canonically identified with the idempotent completion of the localization $\mathcal{D}^{c}/{\cal B}^{c}$. Thus the second statement follows from the first.
  
  The generators $R^{\et}(X)(n)$ ($X$ smooth, $n\in\Z$) for $\mathcal{D}^{\et}$ clearly lie in the image of $\aet$. It thus suffices to show that the functor in the first statement is fully faithful on compact objects. Let $M\in\mathcal{D}$ be a geometric motive, $N\in\mathcal{D}$ an arbitrary motive, and consider the triangle
  \begin{equation*}
    N\to N\otimes\Q\to N\otimes\Q/R\to N[1]
  \end{equation*}
  in $\mathcal{D}/{\cal B}$. By the long exact sequence associated to the functor $\hom_{\mathcal{D}/{\cal B}}(M,-)$ and the 5-lemma, it suffices to show that the following two maps are bijections (for arbitrary $N$):
  \begin{align}
    \label{bott-rational}\hom_{\mathcal{D}/{\cal B}}(M,N\otimes\Q)&\to\hom_{\mathcal{D}^{\et}}(M,N\otimes\Q),\\
    \label{bott-torsion}\hom_{\mathcal{D}/{\cal B}}(M,N\otimes\Q/R)&\to\hom_{\mathcal{D}^{\et}}(M,N\otimes\Q/R).
  \end{align}

  For the first map, consider the following diagram
  \begin{equation*}
    \xymatrix{\mathcal{D}\ar@<2pt>[r]^{\bc}\ar[d]_{\aet}&\mathcal{D}_{\Q}\ar[d]^{\aet}_{\sim}\ar@<2pt>[l]^{\ff}\\
      \mathcal{D}^{\et}\ar@<2pt>[r]^{\bc_{\et}}&\mathcal{D}^{\et}_{\Q}\ar@<2pt>[l]^{\ff^{\et}}}
  \end{equation*}
  where $\mathcal{D}^{(\et)}_{\Q}$ denotes the corresponding category of motives with rational coefficients. The subdiagram with the right adjoints removed is commutative. The left vertical map factors through the localization with respect to ${\cal B}$, and we obtain another diagram
  \begin{equation*}
    \xymatrix{\mathcal{D}\ar@<2pt>[r]^{\bc}\ar[d]_{q}&\mathcal{D}_{\Q}\ar@{=}[d]\ar@<2pt>[l]^{\ff}\\
\mathcal{D}/{\cal B}\ar@<2pt>[r]^{\bco}\ar[d]_{\overline{\aet}}&\mathcal{D}_{\Q}\ar[d]^{\aet}_{\sim}\ar@<2pt>[l]^{\ffo}\\
      \mathcal{D}^{\et}\ar@<2pt>[r]^{\bc_{\et}}&\mathcal{D}^{\et}_{\Q}\ar@<2pt>[l]^{\ff^{\et}}}
  \end{equation*}
  which commutes in the same sense. (Here, the existence of the right adjoint in the second row follows from Brown representability.)

  We claim that the following identities hold:
  \begin{align*}
    q\ff\bc=\ffo\bco q,&&\aet\ff\bc=\ff^{\et}\bc_{\et}\aet.
  \end{align*}
  To prove the first identity, let $A,B\in\mathcal{D}$ and consider the following sequence of canonical maps and identifications:
  \begin{align*}
    \hom_{\mathcal{D}/{\cal B}}(qA,\ffo\bco qB)&=\hom_{\mathcal{D}_{\Q}}(\bco qA,\bco qB)\\
                                                     &=\hom_{\mathcal{D}_{\Q}}(\bc A,\bc B)\\
                                                     &=\hom_{\mathcal{D}}(A,\ff\bc B)\\
                                                     &\to\hom_{\mathcal{D}/{\cal B}}(qA,q\ff\bc B)
  \end{align*}
  We need to show that the last map (induced by the localization) is bijective, which means that $\ff\bc B$ is local with respect to the localization, in other words $\ff\bc B\in({\cal B})^{\bot}$. But for any prime $\ell\notin S$ and any integer $p$, we have
  \begin{align*}
    \hom_{\mathcal{D}}(\ff\cone(\beta_{\ell})[p],\ff\bc B)=\hom_{\mathcal{D}_{\Q}}(\bc\ff\cone(\beta_{\ell})[p],\bc B)=\hom_{\mathcal{D}_{\Q}}(0,\bc B)=0
  \end{align*}
  and thus the claim.

  For the second identity, recall that in $\mathcal{D}$ (respectively $\mathcal{D}^{\et}$), the composition $\ff\bc$ (respectively $\ff^{\et}\bc_{\et}$) is simply tensoring with $\Q$. Since $\aet$ is monoidal, we get indeed
  \begin{align*}
    \aet\ff\bc A=\aet(A\otimes\Q)=\aet A\otimes\aet\Q=\aet A\otimes\Q=\ff^{\et}\bc_{\et}\aet A.
  \end{align*}

  Let us come back to the map in \cref{bott-rational}. It decomposes as the following composition:
  \begin{align}\label{etale-bijection}
    \begin{split}
      \hom_{\mathcal{D}/{\cal B}}(qM,q\ff\bc N)&=\hom_{\mathcal{D}/{\cal B}}(qM,\ff\bc qN)\\
      &=\hom_{\mathcal{D}_{\Q}}(\bc M,\bc N)\\
      &=\hom_{\mathcal{D}^{\et}_{\Q}}(\aet\bc M,\aet\bc N)\\
      &=\hom_{\mathcal{D}^{\et}_{\Q}}(\bc_{\et}\aet M,\bc_{\et}\aet N)\\
      &=\hom_{\mathcal{D}^{\et}}(\aet M,\ff^{\et}\bc_{\et}\aet N)\\
      &=\hom_{\mathcal{D}^{\et}}(\aet M,\aet\ff\bc N)
    \end{split}
  \end{align}
  and is therefore a bijection.

  We now turn to \cref{bott-torsion}. The motive $\Q/R$ is a sum $\oplus_{\ell\notin S}R[\ell^{\infty}]$ where $R[\ell^{\infty}]$ is the homotopy colimit of $(R/\ell^{n})_{n}$ with transition maps $R/\ell^{n}\to R/\ell^{n+1}$ given by multiplication by $\ell$. As the tensor product commutes with direct sums and homotopy colimits and $M$ is compact, we reduce to show that
  \begin{equation}
    \label{bott-finite}
    \hom_{\mathcal{D}/{\cal B}}(M,N\otimes R/\ell^{n})\to\hom_{\mathcal{D}^{\et}}(M,N\otimes R/\ell^{n})
  \end{equation}
  is a bijection.

  Write $\mathcal{D}^{(\et)}_{\ell^{n}}$ for the corresponding categories of motives with $R/\ell^{n}$-coefficients. Let ${\cal B}_{\ell^{n}}$ be the localizing subcategory of $\mathcal{D}_{\ell^{n}}$ generated by $\cone(\beta_{\ell^{n}})$. By \cref{Nn}, $\bc$ maps ${\cal B}$ to ${\cal B}_{\ell^{n}}$ so that we obtain a diagram
  \begin{equation*}
    \xymatrix{\mathcal{D}\ar@<2pt>[r]^{\bc}\ar[d]_{q}&\mathcal{D}_{\ell^{n}}\ar[d]^{q}\ar@<2pt>[l]^{\ff}\\
\mathcal{D}/{\cal B}\ar@<2pt>[r]^{\bco}\ar[d]_{\overline{\aet}}&\mathcal{D}_{\ell^{n}}/{\cal B}_{\ell^{n}}\ar[d]^{\overline{\aet}}_{\sim}\ar@<2pt>[l]^{\ffo}\\
      \mathcal{D}^{\et}\ar@<2pt>[r]^{\bc_{\et}}&\mathcal{D}^{\et}_{\ell^{n}}\ar@<2pt>[l]^{\ff^{\et}}}
  \end{equation*}
as before. The bottom right equivalence follows from \cref{hh-bott}.

We now claim that the analogous identities hold:
\begin{align*}
  q\ff\bc=\ffo\bco q,&&\aet\ff\bc=\ff^{\et}\bc_{\et}\aet.
\end{align*}
The second identity is proved as before, while the first one can be verified as follows. Since $\bc$ preserves compact objects $\ff$ commutes with small sums, and it follows that $\ff$ maps ${\cal B}_{\ell^{n}}$ to ${\cal B}$, by \cref{bott-nN-integrally}. It follows from \cref{triangulated-adjunction} that $\bco$ and $\ffo$ are simply the functors induced by $\bc$ and $\ff$, respectively. The first identity follows immediately. The map in \cref{bott-finite} is now seen to be a bijection precisely as in \cref{etale-bijection}.
\end{proof}

\begin{lem}\label{triangulated-adjunction}
  Let ${\cal D},{\cal D}'$ be two triangulated categories, let $\bc\dashv\ff:{\cal D}\to{\cal D}'$ be an adjunction, and suppose ${\cal B}\subset{\cal D}$ and ${\cal B}'\subset{\cal D}'$ are thick subcategories such that the corresponding Bousfield localizations exist.

If $\bc{\cal B}\subset{\cal B}'$ and $\ff{\cal B}'\subset{\cal B}$ then $\bc$ and $\ff$ descend to an adjunction on the quotient categories: $\bc\dashv\ff:{\cal D}/{\cal B}\to {\cal D}'/{\cal B}'$.
\end{lem}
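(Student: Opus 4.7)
The plan is to produce the descended functors, establish the adjunction using the Bousfield reflective-subcategory structure, and identify the crux of the argument. I would proceed in three steps.

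First, I would descend the functors. Let $L\colon\mathcal{D}\to\mathcal{D}/\mathcal{B}$ and $L'\colon\mathcal{D}'\to\mathcal{D}'/\mathcal{B}'$ denote the Verdier localizations. Since $\bc(\mathcal{B})\subset\mathcal{B}'$ by hypothesis, the triangulated composite $L'\circ\bc$ annihilates $\mathcal{B}$, so by the universal property of $L$ it factors uniquely through a triangulated functor $\overline{\bc}\colon\mathcal{D}/\mathcal{B}\to\mathcal{D}'/\mathcal{B}'$ with $\overline{\bc}\circ L=L'\circ\bc$. Symmetrically, $\ff(\mathcal{B}')\subset\mathcal{B}$ yields $\overline{\ff}\colon\mathcal{D}'/\mathcal{B}'\to\mathcal{D}/\mathcal{B}$ with $\overline{\ff}\circ L'=L\circ\ff$.

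Second, I would exploit the Bousfield hypothesis, which gives fully faithful right adjoints $i\colon\mathcal{D}/\mathcal{B}\hookrightarrow\mathcal{D}$ and $i'\colon\mathcal{D}'/\mathcal{B}'\hookrightarrow\mathcal{D}'$ to $L$ and $L'$, with essential images the local objects $\mathcal{B}^{\perp}$ and $\mathcal{B}'^{\perp}$. The key observation is that $\ff$ sends $\mathcal{B}'^{\perp}$ into $\mathcal{B}^{\perp}$: for $Y\in\mathcal{B}'^{\perp}$ and $B\in\mathcal{B}$, the original adjunction gives
\begin{equation*}
  \hom_{\mathcal{D}}(B,\ff Y)=\hom_{\mathcal{D}'}(\bc B,Y)=0,
\end{equation*}
since $\bc B\in\mathcal{B}'$ by hypothesis. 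Consequently there is a canonical natural isomorphism $\ff\circ i'\cong i\circ\overline{\ff}$ (both sides agree with $\ff$ on underlying objects of $\mathcal{B}'^{\perp}$, viewed inside $\mathcal{B}^{\perp}$).

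Third, chaining the standard identifications for adjoints $L\dashv i$ and $L'\dashv i'$ together with $\bc\dashv\ff$ and the natural iso just established yields, for $X\in\mathcal{D}/\mathcal{B}$ and $Y'\in\mathcal{D}'/\mathcal{B}'$,
\begin{align*}
  \hom_{\mathcal{D}'/\mathcal{B}'}(\overline{\bc}X,Y')
  &\cong\hom_{\mathcal{D}'}(\bc iX,i'Y')\\
  &\cong\hom_{\mathcal{D}}(iX,\ff i'Y')\\
  &\cong\hom_{\mathcal{D}}(iX,i\overline{\ff}Y')\\
  &\cong\hom_{\mathcal{D}/\mathcal{B}}(X,\overline{\ff}Y'),
\end{align*}
where the first step uses $\overline{\bc}X\cong \overline{\bc}LiX=L'\bc iX$ and the $(L',i')$-adjunction, and the last step uses full faithfulness of $i$. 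Naturality in both variables is automatic from naturality of each intermediate isomorphism, proving $\overline{\bc}\dashv\overline{\ff}$.

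The only genuine content is the identification $\ff\circ i'\cong i\circ\overline{\ff}$, i.e.\ that $\ff$ preserves local objects; this is where the hypothesis $\bc\mathcal{B}\subset\mathcal{B}'$ is used in an essential way. The dual hypothesis $\ff\mathcal{B}'\subset\mathcal{B}$ is needed only to produce the functor $\overline{\ff}$ at the start. Everything else is formal manipulation of adjunctions.
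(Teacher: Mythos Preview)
Your proof is correct and follows essentially the same approach as the paper: both hinge on the observation that $\ff$ sends local objects to local objects, proved via the adjunction $\bc\dashv\ff$ together with $\bc\mathcal{B}\subset\mathcal{B}'$. The only cosmetic difference is that you work systematically with the fully faithful inclusions $i,i'$ of local objects, whereas the paper works with arbitrary representatives $a\in\mathcal{D}$, $b\in\mathcal{D}'$ and then separately checks that the localization map $\hom_{\mathcal{D}}(a,\ff L'b)\to\hom_{\mathcal{D}/\mathcal{B}}(a,\ff L'b)$ and the comparison $\ff b\to\ff L'b$ become isomorphisms; your bookkeeping is arguably cleaner, and your closing remark about where each hypothesis enters is a nice clarification that the paper's version slightly obscures.
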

\begin{proof}
  By our assumption, these functors do descend to the quotient categories. To see that the induced functors are still adjoint to each other, let us denote by $L'$ the localization functor on ${\cal D}'$ with respect to ${\cal B}'$, and consider the following sequence of morphisms and canonical identifications, where $a\in{\cal D}$ and $b\in{\cal D}'$:
  \begin{align*}
    \hom_{{\cal D}'/{\cal B}'}(\bc a,b)&=\hom_{{\cal D}'/{\cal B}'}(\bc a,L'b)\\
    &=\hom_{{\cal D}'}(\bc a,L'b)\\
    &=\hom_{{\cal D}}( a,\ff L'b)\\
    &\to \hom_{{\cal D}/{\cal B}}(a,\ff L'b)\\
    &\leftarrow \hom_{{\cal D}/{\cal B}}(a,\ff b)
  \end{align*}
It remains to check that the last two maps are bijections. We know that $L'b\in({\cal B}')^{\bot}$ and hence for any $x\in{\cal B}$ we have
\begin{align*}
  \hom_{{\cal D}}(x,\ff L'b)=\hom_{{\cal D}'}(\bc x,L'b)=0
\end{align*}
since $\bc{\cal B}\subset{\cal B}'$. In other words $\ff L'b\in{\cal B}^{\bot}$ and the first map above is therefore a bijection. We also know that $b\to L'b$ becomes invertible in ${\cal D}'/{\cal B}'$, \ie its cone lies in ${\cal B}'$. It follows that the cone of $\ff b\to\ff L'b$ lies in $\ff{\cal B}'\subset{\cal B}$ and the second map above is therefore a bijection as well.

\end{proof}

\end{document}